\numberwithin{equation}{section}
\theoremstyle{plain}
\newtheorem{theorem}{Theorem}[section]
\newtheorem{corollary}[theorem]{Corollary}
\newtheorem{lemma}[theorem]{Lemma}
\newtheorem{proposition}[theorem]{Proposition}
\newtheorem{conjecture}[theorem]{Conjecture}
\theoremstyle{definition}
\newtheorem{remark}[theorem]{Remark}
\theoremstyle{remark}
\newcommand{\OO}{\mathcal O}
\newcommand{\A}{\mathbb{A}}
\newcommand{\R}{\mathbb{R}}
\newcommand{\G}{\mathbb{G}}
\newcommand{\Q}{\mathbb{Q}}
\newcommand{\Z}{\mathbb{Z}}
\newcommand{\C}{\mathbb{C}}
\renewcommand{\H}{\mathbb{H}}
\newcommand{\D}{\mathbb{D}}
\newcommand{\zxz}[4]{\begin{pmatrix} #1 & #2 \\ #3 & #4 \end{pmatrix}}
\newcommand{\abcd}{\zxz{a}{b}{c}{d}}
\newcommand{\kzxz}[4]{\left(\begin{smallmatrix} #1 & #2 \\ #3 & #4\end{smallmatrix}\right) }
\newcommand{\calE}{\mathcal{E}}
\newcommand{\calF}{\mathcal{F}}
\newcommand{\calO}{\mathcal{O}}
\newcommand{\calT}{\mathcal{T}}
\newcommand{\calX}{\mathcal{X}}
\newcommand{\calZ}{\mathcal{Z}}
\newcommand{\eps}{\varepsilon}
\newcommand{\bs}{\backslash}
\newcommand{\norm}{\operatorname{N}}
\newcommand{\vol}{\operatorname{vol}}
\newcommand{\tr}{\operatorname{tr}}
\newcommand{\GSpin}{\operatorname{GSpin}}
\newcommand{\CT}{\operatorname{CT}}
\newcommand{\Orth}{\operatorname{O}}
\newcommand{\Hom}{\operatorname{Hom}}
\newcommand{\Aut}{\operatorname{Aut}}
\newcommand{\Spec}{\operatorname{Spec}}
\newcommand{\End}{\operatorname{End}}
\newcommand{\sig}{\operatorname{sig}}
\newcommand{\Pet}{\text{\rm Pet}}
\newcommand{\GL}{\operatorname{GL}}
\newcommand{\SO}{\operatorname{SO}}
\newcommand{\supp}{\operatorname{supp}}
\newcommand{\dv}{\operatorname{div}}
\newcommand{\scal}{\text{\rm sc}}
\newcommand{\cha}{\operatorname{char}}
\newcommand{\diag}{\operatorname{diag}}
\newcommand{\ord}{\operatorname{ord}}
\newcommand{\Gspin}{\operatorname{GSpin}}
\newcommand{\ff}{\hbox{if }}
\newcommand{\SL}{\operatorname{SL}}
\newcommand{\Res}{\operatorname{Res}}
\newcommand{\CH}{\operatorname{CH}}
\newcommand{\reg}{\operatorname{reg}}
\newcommand{\CM}{\operatorname{CM}}
\newcommand{\s}{\sigma}
\newcommand{\RR}{\text{\rm Res}}   
\newcommand{\ttt}{\otimes}
\newcommand{\lra}{\longrightarrow}
\newcommand{\ph}{\varphi}
\newcommand{\action}{\bullet}
\newcommand{\reflex}{\sharp}
\newcommand{\CCM}{\mathcal C \mathcal M}
\newcommand{\st}{\operatorname{st}}
\newcommand{\PF}{\operatorname{PF}}
\newcommand{\bA}{\bold A}
\newcommand{\isoarrow}{\ {\overset{\sim}{\longrightarrow}}\ }   
\newcommand{\LL}{\mathcal L} 
\newcommand{\DD}{^\Delta}    
\begin{document}

\baselineskip =16pt

\title[Faltings heights of big CM cycles and derivatives of  $L$-functions]{Faltings heights of big CM cycles\\ and derivatives of  $L$-functions}

\author[Jan H.~Bruinier, Stephen S. Kudla,  and Tonghai Yang]{Jan
Hendrik Bruinier, Stephen S. Kudla, and Tonghai Yang}
\address{Fachbereich Mathematik,
Technische Universit\"at Darmstadt, Schlossgartenstrasse 7, D--64289
Darmstadt, Germany}
\email{bruinier@mathematik.tu-darmstadt.de}
\address{Department of Mathematics, University of Toronto, Toronto, Ontario, Canada M5S
2E4} \email{skudla@math.utoronto.ca}
\address{Department of Mathematics, University of Wisconsin Madison, Van Vleck Hall, Madison, WI 53706, USA}
\email{thyang@math.wisc.edu} \subjclass[2000]{11G15, 11F41, 14K22}

\begin{abstract}
  We give a formula for the values of automorphic Green functions on
  the special rational 0-cycles (big CM points) attached to certain
  maximal tori in the Shimura varieties associated to rational
  quadratic spaces of signature $(2d,2)$. Our approach depends on the
  fact that the Green functions in question are constructed as
  regularized theta lifts of harmonic weak Mass forms, and it involves
  the Siegel-Weil formula and the central derivatives of incoherent
  Eisenstein series for totally real fields.  In the case of a weakly
  holomorphic form, the formula is an explicit combination of
  quantities obtained from the Fourier coefficients of the central
  derivative of the incoherent Eisenstein series. In the case of a
  general harmonic weak Maass form, there is an additional term given
  by the central derivative of a Rankin-Selberg type convolution.
\end{abstract}

\thanks{The first author is partially supported by DFG grant BR-2163/2-1. 
The third author is partially supported by grants NSF DMS-0855901
and NSFC-10628103.}

\subjclass[2000]{11G18, 14G40, 11F67}

\date{\today}
\maketitle

\section{\bf Introduction} \label{sect1}

In 1985, Gross and Zagier discovered a beautiful factorization formula
for singular moduli \cite{GZSingular}. This has inspired a lot of
interesting work, including Dorman's generalization to odd discrinants
\cite{Do}, Elkies's examples on Shimura curves \cite{El} and Lauter's
conjecture on the Igusa $j$-invariants (\cite{GL}, \cite{Yam=1},
\cite{YaGeneral}), among others. In his thesis, Schofer \cite{Scho}
proved a much more general factorization formula for the `small' CM
values of Borcherds modular functions on a Shimura variety of
orthogonal type via regularized theta liftings. The proof is very
natural and is based on a method introduced in \cite{KuIntegral}.  Two
of the authors adapted the same idea to study the `small' CM values of
automorphic Green functions
and discovered a direct link between the CM value and the central
derivative of a certain Rankin-Selberg $L$-function. This direct link
is used to give a different proof of the well-known Gross-Zagier
formula \cite{BY2}. Here `small' means that the CM cycles are
associated to quadratic imaginary quadratic fields.  On the other
hand, the two authors also extended Gross and Zagier's factorization
formula, using a method close to Gross and Zagier's original idea, to
`big' CM values of some Hilbert modular functions on a Hilbert modular
surface. Here `big' means that the CM cycle is associated to a maximal
torus of the reductive group giving the Hilbert modular surface.

A motivating question for this paper is whether this `big' CM value
result can also be derived using the regularized theta lifting method
in \cite{Scho} and \cite{BY2}, which is more natural and simpler.
While the small CM cycles are constructed systematically and
associated to rational negative two planes in the quadratic space
defining the Shimura variety, no big CM cycles are constructed this
way. In Section \ref{sect2}, we describe a way to construct big CM
cycles in some special Shimura varieties (including Hilbert modular
surfaces), and study their Galois conjugates. In Sections
\ref{sect3}--\ref{sect5}, we extend the CM value result in \cite{BY2}
to this situation. In Section \ref{Hilbert}, we restrict to the
special case of Hilbert modular surfaces and give a new proof of the
main results in \cite{BY1} and a generalization.  Actually, to get the
CM cycles in \cite{BY1} from this construction is not straightforward
and quite interesting. An arithmetic application is given at the end
of Section \ref{Hilbert}.  We now describe this work in more detail.

Let $(V, Q_V)$ be a rational quadratic space of signature $(2d, 2)$
for some positive integer $d \ge 1$.  Let $G=\Gspin(V)$ and let $K
\subset G(\hat{\Q})$ be a compact open subgroup\footnote{We write
  $\hat\Q = \Q\otimes_\Z{\hat\Z}$ for the finite ad\`eles of $\Q$,
  where $\hat\Z = \varprojlim_{n}\Z/n\Z$.}. Let $\mathbb D$ be the
associated Hermitian domain of oriented negative $2$-planes in $V(\R)=
V\otimes_\Q \R$, and let
\begin{equation}
X_K = G(\Q) \backslash \big(\,\mathbb D \times G(\hat{\Q})/K\,\big)
\end{equation}
be the associated Shimura variety which has a canonical model over
$\Q$.  Assume that there is a totally really number field $F$ of
degree $d+1$ and a two-dimensional $F$-quadratic space $(W, Q_W)$ of
signature
$$
\sig(W) =((0, 2), (2, 0), \cdots, (2,0))
$$
with respect to the $d+1$ embeddings $\{ \sigma_j\}_{j=0}^d$ such
that
$$
V= \Res_{F/\Q} W,  \quad Q_V(x) = \tr_{F/\Q} Q_W(x).
$$
Then there is an orthogonal direct sum decomposition
$$
V(\R) =\oplus_j W_{\sigma_j},  \quad W_{\sigma_j} =W\otimes_{F,
\sigma_j} \R.
$$
The negative $2$-plane $W_{\sigma_0}$ gives rise to two points
$z_0^\pm$ in $\mathbb D$. Let $T$ be the preimage of $\Res_{F/\Q}
\SO(W) \subset \SO(V)$ in $G$. Then $T$ is a maximal torus associated
to the CM number field $E =F(\sqrt{-\det W})$, and we obtain a `big'
CM cycle in $X_K$:
$$
Z(W, z_0^\pm) = T(\Q) \backslash \big(\,\{ z_0^\pm\} \times T(\hat{\Q})/K_T\,\big),
$$
where $K_T = T(\hat{\Q}) \cap K$. The CM cycle $Z(W, z_0^\pm)$ is
defined over $F$, and the formal sum $Z(W)$ of all its Galois
conjugates as a $0$-cycle in $X_K$ is defined over $\Q$. We refer to
Section~\ref{sect2} for details.

Let $L$ be an even integral lattice in $V$ such that $K$ preserves $L$
and acts trivially on $L'/L$, where $L'$ is the dual lattice.  Let
$S_L$ be the space of locally constant functions on $\hat V=V\otimes
\hat{\Q}$ which are $\hat L$-invariant and have support in $\hat{L}'$,
and let $\rho_L$ be the associated `Weil representation' of
$\SL_2(\Z)$ on it.  For each harmonic weak Maass form $f \in H_{1-d,
  \bar{\rho}_L}$, there is a corresponding special divisor $Z(f)$
(determined by the principal part of $f$) and an automorphic Green
function $\Phi(\cdot, f)$ which is constructed in \cite{BF} as a
regularized theta lift of $f$ (see Section \ref{sect3}). On the other
hand, associated to $L$, there is also an incoherent (vector valued
and normalized) Hilbert Eisenstein series $E^*(\vec \tau, s, L, \bold
1)$ of parallel weight $1$ (see Section \ref{sect4}) such that its
diagonal restriction to $\Q$ is a weight $d+1$ non-holomorphic modular
form with representation $\rho_L$.  Let $\mathcal E(\tau, L)$ be the
`holomorphic part' of $E^{*, \prime}(\tau^\Delta, 0, L, \bold 1)$,
where, for $\tau \in \mathbb H$, we put $\tau^\Delta=(\tau, \cdots,
\tau) \in \mathbb H^{d+1}$. Finally define the generalized
Rankin-Selberg $L$-function
 \begin{equation}
\LL(s, \xi(f), L) = \langle E^*(\tau^\Delta, s, L, \bold 1),
\xi(f)\rangle_{\Pet}
 \end{equation}
 to be the Petersson inner product of the pullback of the Eisenstein
 series and the holomorphic cusp form $\xi(f)$ of weight $d+1$, given
 by the differential operator
 $\xi(f)=2iv^{1-d}\overline{\frac{\partial f}{\partial \bar \tau}}$.
In Section \ref{sect5}, we prove the following general formula, which
is similar to that in \cite{Scho} and \cite[Theorem
\ref{theo5.1}]{BY2}.

\begin{theorem} 
\label{theo1.1}  
Let the notation be as above. Then
\begin{equation}
\label{eq:1.1}
\Phi(Z(W), f) = \frac{\deg Z(W, z_0^\pm)}{\Lambda(0, \chi_{E/F})}
\left( \CT[\langle f^+(\tau), \mathcal E(\tau, L)\rangle]-
\LL'(0,\xi(f), L)\right).
\end{equation}
Here $\chi_{E/F}$ is the quadratic Hecke character of $F$ associated
to $E/F$, $f^+$ is the holomorphic part of $f$, and $\CT[\langle
f^+(\tau), \mathcal E(\tau, L)\rangle]$ is the constant term  of
$$
\langle f^+(\tau), \mathcal E(\tau, L)\rangle =
\sum_{\mu \in L'/L} f^+(\tau, \mu) \,\mathcal E(\tau, L,\mu),
$$
where $f^+(\tau, \mu)$ is the $\mu$-component of $f^+$, and $\mathcal E(\tau, L,\mu)$ is the $\mu$-component of $\mathcal E(\tau, L)$.
\end{theorem}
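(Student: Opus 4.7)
The plan is to adapt the regularized theta integral approach of Schofer and \cite{BY2} to the big CM setting, where the role played by an imaginary quadratic field in the small CM case is replaced by the CM field $E/F$. I would first expand $\Phi(Z(W), f)$ as the sum over the points of $Z(W, z_0^\pm)$ together with their Galois conjugates of the regularized theta integral that defines $\Phi(\cdot, f)$. After interchanging this finite sum with the regularization, the task reduces to averaging the theta kernel $\theta_L(\tau, z, h)$ over the adelic CM cycle $T(\Q)\backslash T(\hat\Q)/K_T$, where Galois conjugation replaces archimedean components by conjugates in a way that fills out the full adelic orbit. The orthogonal splitting $V(\R) = \bigoplus_{j=0}^d W_{\sigma_j}$ induced by $z_0^\pm$ causes the archimedean component of the theta kernel to factor as a product over the infinite places of $F$, so that the adelic average is naturally expressed as a theta integral on the two-dimensional $F$-quadratic space $(W, Q_W)$.

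The key ingredient is the Siegel--Weil formula over $F$ applied to $(W, Q_W)$. Although $W$ is totally definite as an $F$-space, the orientation data attached to the negative $2$-plane $z_0^\pm$ disagrees with the positive definite Gaussian that would yield a coherent theta integral at $\sigma_0$; consequently the collection of local Schwartz data is incoherent in Kudla's sense, and the relevant Hilbert Eisenstein series is precisely the $E^*(\vec\tau, s, L, \mathbf 1)$ of the statement. Since $E^*$ vanishes identically at $s=0$, the Siegel--Weil identity expresses the adelic average of the theta kernel in terms of the central derivative $E^{*\prime}(\vec\tau, 0, L, \mathbf 1)$. The scalar factor $\deg Z(W, z_0^\pm)/\Lambda(0, \chi_{E/F})$ arises from comparing the volume of $T(\Q)\backslash T(\hat\Q)/K_T$ with the standard adelic normalization in Siegel--Weil, with $\Lambda(0, \chi_{E/F})$ emerging from the completed $L$-function in the Eisenstein normalization.

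Substituting back and restricting to the diagonal $\vec\tau = \tau^\Delta$ converts $\Phi(Z(W), f)$ into a regularized pairing of $f$ against $E^{*\prime}(\tau^\Delta, 0, L, \mathbf 1)$, a non-holomorphic weight $d+1$ form with representation $\rho_L$. Decomposing $f = f^+ + f^-$ into its holomorphic and non-holomorphic parts, and similarly splitting $E^{*\prime}(\tau^\Delta, 0, L, \mathbf 1)$ as $\mathcal E(\tau, L)$ plus a non-holomorphic complement, the regularization picks off exactly the constant Fourier coefficient $\CT[\langle f^+(\tau), \mathcal E(\tau, L)\rangle]$ from the holomorphic/holomorphic pairing. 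The remaining cross terms are treated via Stokes' theorem, using the identity $\xi(f) = 2iv^{1-d}\overline{\partial f/\partial\bar\tau}$, and reassemble into $\langle E^*(\tau^\Delta, s, L, \mathbf 1), \xi(f)\rangle_{\Pet}$, whose $s$-derivative at $0$ is $\LL'(0, \xi(f), L)$, accounting for the last term in \eqref{eq:1.1}.

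The main technical obstacle I anticipate is reconciling the two distinct analytic continuations at play: the regularization parameter of the theta lift that defines $\Phi(\cdot, f)$, and the spectral parameter $s$ of the Hilbert Eisenstein series supplied by Siegel--Weil. One must justify interchanging the regularized $\tau$-integration with differentiation in $s$ at $s = 0$, and verify convergence of the regularized pairing against a form with subtle growth at the cusp (the derivative of an incoherent Eisenstein series whose constant term need not vanish). A secondary but substantial challenge is the local Schwartz-function bookkeeping required to ensure that the Eisenstein series produced by Siegel--Weil is \emph{exactly} the normalized $E^*(\vec\tau, s, L, \mathbf 1)$ of the statement, together with the precise normalizing constant $\Lambda(0, \chi_{E/F})^{-1}$.
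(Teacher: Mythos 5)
Your overall architecture — expand $\Phi(Z(W),f)$ as a sum over Galois conjugates, reduce the adelic average of the theta kernel to a Hilbert theta integral on $(W,Q_W)$ by seesaw, invoke Siegel--Weil, then finish by Stokes' theorem against the asymptotic expansion of the Eisenstein series at the cusp — matches the paper's. But there is a genuine error at the central step. You write that, because the incoherent series vanishes at $s=0$, ``the Siegel--Weil identity expresses the adelic average of the theta kernel in terms of the central derivative $E^{*\prime}(\vec\tau,0,L,\mathbf 1)$.'' That is not what Siegel--Weil says, and no version of it can say this: the Siegel--Weil formula identifies a theta integral with the \emph{value} at $s=0$ of an Eisenstein series attached to the \emph{same} (coherent) collection of local data. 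An incoherent collection has no theta integral attached to it at all; its derivative can never be the direct output of Siegel--Weil. The theta integral over the $j$-th Galois conjugate cycle is genuinely attached to the global $F$-quadratic space $W(j)$, whose archimedean signature has a single $(0,2)$-slot at $\sigma_j$, so Siegel--Weil produces the \emph{coherent} Hilbert Eisenstein series $E(\tau^\Delta,0,\ph,\mathbf 1(j))$ of mixed weight $\mathbf 1(j) = (1,\dots,-1,\dots,1)$, evaluated at $s=0$ (Proposition~\ref{prop4.2}). Also note your claim that ``$W$ is totally definite as an $F$-space'' is false; only the \emph{incoherent} family $\mathcal C$, obtained by replacing the archimedean datum, is totally positive definite.

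What closes the gap — and what is missing from your proposal — is the $\bar\partial$-lowering identity, Lemma~\ref{lem4.1}, which is purely archimedean and local in nature:
\begin{equation*}
-2\,\bar\partial_j\bigl(E'(\vec\tau,0,\ph,\mathbf 1)\,d\tau_{\sigma_j}\bigr) = E(\vec\tau,0,\ph,\mathbf 1(j))\,d\mu(\tau_{\sigma_j}).
\end{equation*}
This is the bridge between the coherent series of weight $\mathbf 1(j)$ delivered by Siegel--Weil and the derivative of the incoherent series $E^*(\vec\tau,s,L,\mathbf 1)$; it has nothing to do with the vanishing of $E^*$ at $s=0$ per se, and it is applied \emph{after} Siegel--Weil, not as part of it. Summing over $j=0,\dots,d$ assembles the total exterior $\bar\partial$, and only at that point does Stokes' theorem produce the two terms you describe. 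Without this lemma your chain of reasoning stalls at a coherent Eisenstein series of weight $\mathbf 1(j)$, which restricted to the diagonal is a genuine non-holomorphic weight $d+1$ form but is not an $s$-derivative of anything, so there is nothing to extract $\LL'(0,\xi(f),L)$ from. You should also verify that the constant $\tfrac12\deg Z(T,z_0^\pm)$ in Proposition~\ref{prop4.2} is independent of $j$ (Lemma~\ref{lem4.2}), which is needed to factor it out of the sum over Galois conjugates. The rest of your proposal — in particular the identification of the boundary contribution as $\CT[\langle f^+,\mathcal E(\tau,L)\rangle]$ using the asymptotics from Proposition~\ref{prop4.3}, and the interior contribution as the Petersson pairing — is in the right spirit, but it all hinges on first having the $\bar\partial$-identity in hand.
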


In the special case that $f$ is weakly holomorphic, $\Phi(\cdot, f)$
is the Petersson norm of a meromorphic modular form $\Psi(\cdot,f)$ on
$X_K$ given by the Borcherds lift of $f$.  The second summand on the
right hand side of \eqref{eq:1.1} vanishes and the first summand gives
an explicit formula for the evaluation of $\Psi(\cdot,f)$ on the CM
cycle $Z(W)$.

Note that the first summand $\CT[\langle f^+(\tau), \mathcal E(\tau,
L)\rangle]$ is of arithmetic nature and this theorem suggests two
interesting conjectures about arithmetic intersection numbers and
Faltings heights of big CM cycles, see Conjectures \ref{conj5.2} and
\ref{conj5.3}.

Also note that, in contrast with the situation in \cite{BY2}, the
function $\LL(s, \xi(f),L)$ is not a standard Rankin-Selberg integral,
since it involves the pullback of a Hilbert modular Eisenstein series.
We expect that it is related to a Langlands $L$-function for the group
$G$ and hope to pursue this idea in a subsequent paper.

To explain the Hilbert modular surface case in \cite{BY1}, let $E$ be
a non-biquadratic quartic CM number field with real quadratic subfield
$F=\Q(\sqrt D)$ with fundamental discriminant $D$.  Let $ \sigma $ be
the non-trivial Galois automorphism of $F$.  Let
$$ V:= \{ A \in M_2(F):\,  \sigma(A)
=A^\iota\} =\{ A =\kzxz {u} {b \sqrt D} {\frac{a}{\sqrt D}}
{\sigma(u)}:\,  u \in F, a, b\in \mathbb Q\}
$$
and let
$$
L =\{ A =\kzxz {u} {b \sqrt D} {\frac{a}{\sqrt D}} {\sigma(u)}:\, u
\in \OO_F , a ,  b  \in \Z\}.
$$
Here $A\mapsto A^\iota$ is the main involution of $M_2(F)$. The group
$$G(\Q)=\Gspin(V)(\Q) = \{ g \in \GL_2(F): \,\det g \in \Q^\times\}
$$
acts on $V$ via $g.A =\sigma( g) A g^{-1}$. The Shimura variety $X_K$
is a Hilbert modular surface, and, for suitable choice of $K$, is
isomorphic to $\SL_2(\OO_F) \backslash \mathbb H^2$. Now we describe
the CM cycle $\CM(E)$ in \cite{BY1}, the locus of abelian surfaces
over $\C$ with CM by $\OO_E$, as a formal sum of $Z(W)$'s. For a
principally polarized CM abelian surface $\bold A =(A, \kappa,
\lambda)$ of CM type $(\OO_E, \Sigma)$, let $M =H_1(A, \Z)$ with the
action of $\OO_E$ induced by $\kappa$ and the symplectic form
$\lambda$ induced by the polarization. Define the lattice
$$
L(\bold A)=\{ j \in \End(M):\,  j \circ \kappa(a) = \kappa(\sigma(a)) \circ j, \ a \in \OO_F,  \  j^* =j\}
$$
of special endomorphisms of $M$ with $\Z$-quadratic form $Q(j) =j^2$,
where $j^*$ is the `Rosati' involution induced by $\lambda$.  Let
$V(\bold A) = L(\bold A) \otimes \Q$. Then one can show that the rank
$4$ quadratic lattice $(L(\bold A), Q) \cong (L, \det)$ is independent
of the choice of $\bold A$.  On the other hand, let $\tilde E$ be the
reflex field of $(E, \Sigma)$, which is generated by the type norms
$\norm_\Sigma(r), r \in E$, defined in (\ref{eq:Typenorm}), and let
$\tilde F=\Q(\sqrt{\tilde D})$ be the real quadratic subfield of
$\tilde E$.  It turns out (\cite{HY}, see also Section \ref{Hilbert})
that $V(\bold A)$ has a natural $\tilde E$-vector space structure
together with an $\tilde F$-valued quadratic form $Q_{\bold A}$ such
that
$$
\norm_\Sigma(r) \action j = \kappa(r) \circ j \circ \kappa(\bar r)
$$
for any $r \in E$, and
$$
\tr_{\tilde F/\Q} Q_{\bold A}(j) = Q(j),  \quad j \in V(\bold A).
$$
Let $W(\bold A) =(V(\bold A), Q_\bold A)$ be the resulting
$2$-dimensional quadratic space over $\tilde F$.  Then the rational
torus associated to $W(\bold A)$ is
$$
T(R)=\{ r \in (R\otimes_\Q E)^\times:\,  r \bar r \in R^\times\}
$$
and its rational points $T(\Q)$ act on $W(\bold A)$ via $r\action j =
\frac{1}{r \bar r}\, \kappa(r) \circ j \circ \kappa(\bar r)$. However,
in general, different $\bold A$'s might give different $\tilde
F$-quadratic spaces and different incoherent Eisenstein series
$E^*(\vec \tau, s, L(\bold A), \bold 1)$. The CM cycle $\CM(E)$ is a
union of such CM cycles, and Theorem~\ref{theo1.1} gives a formula for
the CM value $\Phi(\CM(E), f)$ in terms of several incoherent
Eisenstein series. When $D \equiv 1 \mod 4$ is a prime, however, the
formula becomes simple and only one incoherent Eisenstein series is
involved, as in \cite{BY1} (see Theorem \ref{theo6.7}). We have the
following result (Theorem \ref{theo6.10}).

\begin{theorem}  
\label{theo1.2} 
Let $E$ be a CM quartic field with discriminant $D^2 \tilde D$ with $D \equiv 1 \mod 4$ prime
and $\tilde D \equiv 1 \mod 4$ square free and with real quadratic subfield $F=\Q(\sqrt D)$. Let $f \in H_{0, \bar{\rho}_L}$
as above. Then
$$
\Phi(\CM(E), f) = \frac{\deg(\CM(E))}{2 \Lambda(0, \chi)} \left(\CT[\langle f^+, \mathcal E(\tau, \tilde L)\rangle] - \LL'(0, \xi(f), \tilde L) \right).
$$
Here $\tilde L = \OO_{\tilde E}$ with $\tilde F$-quadratic form $\tilde Q(r) =-\frac{1}{\sqrt{\tilde D}}\, r \bar r$, and
$\Lambda(s, \chi)$ is the complete $L$-function of the quadratic Hecke character  $\chi$ of $F$ associated to $E/F$ defined in  (\ref{eq:L-series}).
\end{theorem}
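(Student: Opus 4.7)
My strategy is to deduce Theorem~\ref{theo1.2} from the general CM formula of Theorem~\ref{theo1.1} by writing $\CM(E)$ as a sum of big CM cycles of the type considered in Sections~\ref{sect2}--\ref{sect5}, and then exploiting the primality of $D$ to collapse the resulting sum of incoherent Eisenstein series and Rankin--Selberg $L$-functions to a single one.

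First I would set up the decomposition. As reviewed in the introduction and spelled out in Section~\ref{Hilbert}, to each principally polarized CM abelian surface $\bold A = (A, \kappa, \lambda)$ of CM type $(\OO_E, \Sigma)$ with $\End_{\OO_E}(A) = \OO_E$ there is attached a two-dimensional $\tilde F$-quadratic space $(W(\bold A), Q_{\bold A})$, the even integral lattice of special endomorphisms $L(\bold A) \subset V(\bold A) = \Res_{\tilde F/\Q} W(\bold A)$, and a big CM cycle $Z(W(\bold A))$ in the Hilbert modular surface. Summing over $\bold A$ yields
\begin{equation*}
\CM(E) = \sum_{\bold A} Z(W(\bold A)), \qquad \deg \CM(E) = \sum_{\bold A} \deg Z(W(\bold A)).
\end{equation*}

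Next I would apply Theorem~\ref{theo1.1} term by term. For each $\bold A$ this gives a formula of exactly the shape of Theorem~\ref{theo1.2}, but with $L(\bold A)$ in place of $\tilde L$ and with the quadratic Hecke character of the reflex CM extension $\tilde E/\tilde F$ in place of $\chi$. Summing on $\bold A$ and using the degree identity above would then reduce the theorem to two things: (i) an $\bold A$-independent identification of the pair $(L(\bold A), Q)$ with the fixed pair $(\tilde L, \tilde Q)$ that preserves the incoherent Eisenstein series $\mathcal E(\tau, \cdot)$ and the Rankin--Selberg convolution $\LL(s, \xi(f), \cdot)$; and (ii) the book-keeping identity comparing $\Lambda(0, \chi_{\tilde E/\tilde F})$ with $2\Lambda(0, \chi)$, where the factor of $2$ is accounted for by the two orientations $z_0^\pm$ in the definition of $Z(W, z_0^\pm)$ and the remaining identification comes from CM-reflex duality of the Dedekind zeta functions of $E$ and $\tilde E$ in the non-biquadratic dihedral setting.

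The main obstacle is step (i). A priori the lattices $L(\bold A)$ can occupy a non-trivial genus of $\Z$-quadratic lattices, in which case different $\bold A$'s produce distinct incoherent Eisenstein series and the resulting formula must retain the whole sum over $\bold A$. I would handle this by a local computation at every finite prime of $\tilde F$ using the explicit description $L(\bold A) = \{\, j \in \End(M)\colon j\circ\kappa(a)=\kappa(\sigma(a))\circ j,\ j^* = j\,\}$, together with the hypothesis that $D \equiv 1 \pmod 4$ is prime. Along the lines of \cite{BY1}, this primality should force the genus of $L(\bold A)$ to contain a single $\SO(V)(\hat\Q)$-equivalence class, represented by $\tilde L = \OO_{\tilde E}$ with $\tilde Q(r) = -\frac{1}{\sqrt{\tilde D}}\, r\bar r$. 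Granting this, $\mathcal E(\tau, L(\bold A)) = \mathcal E(\tau, \tilde L)$ and $\LL'(0, \xi(f), L(\bold A)) = \LL'(0, \xi(f), \tilde L)$ uniformly in $\bold A$, and the theorem drops out by summation. The lattice-theoretic identification is the technical heart of the argument; everything else is a formal consequence of Theorem~\ref{theo1.1} and standard bookkeeping.
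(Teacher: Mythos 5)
Your plan matches the paper's proof of this result (Theorem \ref{theo6.10}): decompose $\CM(E)$ into the big CM cycles $Z(W(\bold A))$ via the $C(T)$-action, apply the general formula of Theorem \ref{theo5.1}/\ref{theo1.1} to each, show that under the primality hypothesis on $D$ (and square-freeness of $\tilde D$) all the lattices $\tilde L(\bold A)$ contribute the same Eisenstein series, and finish with the bookkeeping identity $\Lambda(s,\chi)=\Lambda(s,\tilde\chi)$ together with degree counting. This is exactly the route the authors take via Theorem \ref{theo6.7}, Corollary \ref{cor6.5}, and Proposition \ref{prop:onelattice}.

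One point in your write-up is stated in a way that would lead you astray if you tried to execute it literally. You phrase the key lattice lemma as showing that the genus of $L(\bold A)$ among $\Z$-quadratic lattices contains a single $\SO(V)(\hat\Q)$-equivalence class. That is not the relevant statement: by Lemma \ref{lem6.1}(4) all the $(L(\bold A), Q_{\bold A})$ are already $\Z$-isometric to the fixed $(L,Q)$, so the $\hat\Q$-question is trivial and useless. What the Eisenstein series $\mathcal E(\tau,\cdot)$ and the convolution $\LL(s,\xi(f),\cdot)$ actually depend on is the $\tilde F$-quadratic structure on the lattice (cf.\ Remark \ref{rem5.1}), i.e.\ the $\hat{\tilde F}$-isometry class of $(\hat{\tilde L}(\bold A),\tilde Q_{\bold A})$. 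The content of Proposition \ref{prop:onelattice} is precisely that all these $\hat{\tilde F}$-quadratic lattices are isometric to $(\hat{\tilde L},\tilde Q)$ with $\tilde L=\OO_{\tilde E}$ and $\tilde Q(r)=-\tfrac{1}{\sqrt{\tilde D}}r\bar r$; this is proved by a local argument at each prime $\mathfrak p$ of $\tilde F$, using the type norm $\norm_\Sigma(\mathfrak A)$ and the fact that $\tilde E/\tilde F$ ramifies only at the prime over $D$. Your instinct to compute locally over $\tilde F$ is correct; the confusion is only in substituting the $\Z$-genus (and the group $\SO(V)(\hat\Q)$) for the $\tilde F$-genus, which is what the formula actually sees.
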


We expect the factor $\frac{\deg(\CM(E))}{2 \Lambda(0, \chi)}$ to be
$1$ and prove it in some special cases in Section \ref{Hilbert}. We
also give a scalar modular form version of this theorem and arithmetic
applications in Section \ref{Hilbert}. In particular, we have the
following result.

\begin{theorem}  
\label{theo1.3} 
Assume that $E$ is a quartic CM number field with absolute
discriminant $d_E= D^2 \tilde D$ and real quadratic subfield
$F=\Q(\sqrt D)$ such that $D \equiv 1 \mod 4$ is prime and $\tilde D
\equiv 1 \mod 4$ is square-free. Assume further that
$$
\OO_E =\OO_F + \OO_F \frac{w+\sqrt\Delta}2
$$
is free over $\OO_F$, where $w, \Delta \in \OO_F$. Let $\mathcal X$ be
a regular toroidal compactification of the moduli stack of principally
polarized abelian surfaces with real multiplication by $\OO_F$,
\cite{rapoport.HB}, \cite{deligne-pappas}.  For any $f \in H_{0,
  \bar{\rho}_L}$ with $c^+(0, 0)=0$, let $\mathcal Z(f)$ be the
closure of $Z(f)$ in $\mathcal X$ and let $\hat {\mathcal Z}(f) =
(\mathcal Z(f), \Phi(\cdot, f))\in \widehat{\CH}^1(\mathcal X)_\C$.
Let $\CCM(E)$ be the moduli stack of principally polarized abelian
surfaces with CM by $\OO_E$. Then
$$
\langle \hat{ \mathcal Z}(f),  \CCM(E) \rangle_{\text{\rm Fal}} =
 -\frac{1}4  \LL'(0, \xi(f),
\tilde L).
$$
\end{theorem}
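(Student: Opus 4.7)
The approach is to split the Faltings pairing into archimedean and non-archimedean intersection contributions, evaluate the archimedean piece using Theorem~\ref{theo1.2}, and identify the finite piece by a local arithmetic intersection analysis that cancels the constant term coming from the archimedean side. Concretely, I would first unfold
$$
\langle \hat{\mathcal{Z}}(f), \CCM(E)\rangle_{\text{Fal}} = (\mathcal{Z}(f) \cdot \CCM(E))_{\text{fin}} + \tfrac{1}{2}\,\Phi(\CM(E), f)
$$
via the standard decomposition of an Arakelov intersection of the arithmetic divisor $\hat{\mathcal{Z}}(f) = (\mathcal{Z}(f), \Phi(\cdot, f))$ with the horizontal $0$-cycle $\CCM(E)$. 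The hypothesis $c^+(0,0) = 0$ eliminates any boundary correction from the toroidal compactification $\mathcal{X}$, while the freeness of $\OO_E$ over $\OO_F$ guarantees that $\CCM(E)$ is represented by a flat $\Z$-scheme meeting $\mathcal{Z}(f)$ properly at each closed point.

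For the archimedean term, I would apply Theorem~\ref{theo1.2} with the lattice $\tilde{L}$. Under the hypotheses on $D$ and $\tilde{D}$ in the statement, the prefactor $\deg(\CM(E))/(2\Lambda(0, \chi))$ reduces to $1$ (one of the special cases handled in Section~\ref{Hilbert}), giving
$$
\tfrac{1}{2}\,\Phi(\CM(E), f) = \tfrac{1}{2}\,\CT[\langle f^+, \mathcal{E}(\tau, \tilde{L})\rangle] - \tfrac{1}{2}\,\LL'(0, \xi(f), \tilde{L}).
$$
The remaining task is to evaluate the finite intersection. At each rational prime $p$, the local intersection multiplicity of $\mathcal{Z}(f)$ with $\CCM(E)$ can be computed via the deformation theory of special endomorphisms of CM abelian surfaces (\`a la Gross-Keating and Kudla-Rapoport-Yang), and the resulting contributions are packaged by the non-holomorphic Fourier coefficients of the central derivative of the incoherent Eisenstein series $E^{*,\prime}(\tau^\Delta, 0, \tilde{L}, \bold{1})$.

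Summing these local contributions and comparing with the global Fourier expansion of $\mathcal{E}(\tau, \tilde{L})$ should yield a formula whose $\CT[\cdot]$ component cancels the one coming from the archimedean side, leaving exactly $-\tfrac{1}{4}\,\LL'(0, \xi(f), \tilde{L})$. The main obstacle lies precisely in this last step: one must carry out a complete local arithmetic intersection computation on the integral model of $\mathcal{X}$ at every finite prime, including primes dividing $D\tilde{D}$ where the Hilbert modular scheme may exhibit bad reduction. In practice this is the Hilbert modular analogue of the original Gross-Zagier and Gross-Keating local height calculations as carried out in \cite{BY1}, and it is the translation of those results into the present theta-lifting framework that completes the proof.
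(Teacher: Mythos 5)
Your overall strategy---decompose the Faltings pairing into finite and archimedean parts, feed the archimedean part through Theorem~\ref{theo1.2}, and identify the finite part with local arithmetic intersection numbers so that the two contributions conspire to produce $-\tfrac14\LL'$---is indeed the route the paper takes. But several points in your write-up differ from, or fall short of, the actual argument.

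First, a normalization issue: you write the archimedean contribution as $\tfrac12\Phi(\CM(E),f)$, but the paper notes that $\CCM(E)(\C)=\tfrac12\CM(E)$ (each point is weighted by $1/\#\Aut(\bA)$ on the stack but by $2/w_E$ in the construction of Section~\ref{sect2}); the Gillet--Soul\'e contribution is therefore $\tfrac12\Phi(\CCM(E)(\C),f)=\tfrac14\Phi(\CM(E),f)$. Without this factor the bookkeeping does not close. Second, and more substantially, you assert that $c'(E)=1$ is ``one of the special cases handled in Section~\ref{Hilbert}'' and then call on it as input; in fact, for the hypotheses of this theorem ($\OO_E$ free over $\OO_F$), $c'(E)=1$ is \emph{established inside the proof}. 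The paper first takes $f_\scal$ weakly holomorphic with $c^+(0)=0$, so that $\hat{\mathcal T}(f_\scal)=0$ in $\widehat{\CH}^1(\calX)_\C$, and deduces from Corollary~\ref{cor6.13} that $\langle\mathcal T(f_\scal),\CCM(E)\rangle_{\text{fin}}=\tfrac12 c'(E)\sum \tilde c^+(-n)b_n$. Comparing this with the independently proven intersection formula $\langle\mathcal T(f_\scal),\CCM(E)\rangle_{\text{fin}}=\tfrac12\sum \tilde c^+(-n)b_n$ from \cite[Theorem~1.2]{YaGeneral} forces $c'(E)=1$. Third, the finite intersection computation you gesture at (deformation theory of special endomorphisms, Gross--Keating-style local analysis) is \emph{not} carried out in the present paper and is \emph{not} in \cite{BY1} as you suggest; it is the content of \cite{YaGeneral}, which the paper simply cites. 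The freeness hypothesis on $\OO_E/\OO_F$ is precisely the hypothesis under which \cite[Theorem~1.2]{YaGeneral} applies. Once the finite pairing $\tfrac12\sum\tilde c^+(-n)b_n$ is in hand and $c'(E)=1$ is known, the paper adds the archimedean term $\tfrac14\Phi(\CM(E),f_\scal)$ from Corollary~\ref{cor6.14} for general $f_\scal$, the $\sum\tilde c^+(-n)b_n$ piece cancels exactly (this is the cancellation you predict), and with $c^+(0,0)=0$ the remaining $\Lambda'(0,\chi)$ term drops, leaving $-\tfrac{D(D+1)}8\LL'_{\scal}=-\tfrac14\LL'(0,\xi(f),\tilde L)$. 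So your intuition about the cancellation is sound, but as written the proposal leaves the two genuinely hard inputs---the finite intersection formula from \cite{YaGeneral} and the determination of $c'(E)$---unestablished, treating them as routine when they are in fact the crux.
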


The idea of constructing big CM cycles was communicated to one of the
authors (T.Y.)  a couple of years ago by Eyal Goren in a private
conversation.  We thank him for sharing his idea. A slightly more
general type of CM point is discussed in \cite[Section 5]{shimura},
and our result (Theorem~\ref{theo1.1}) can undoubtedly be extended to
that case.

It is interesting to note that the Shimura variety $\text{\rm
  Sh}(G,\mathbb D)$ attached to $G= \GSpin(V)$ is of PEL-type only for
small values of $d$ where accidental isomorphisms occur. In these
cases, the moduli theoretic interpretation of the $0$-cycles defined
in Section 2 is slightly subtle.  Thus, for example, as shown in
Section 6, in the Hilbert modular surface case, the $0$-cycle
associated to abelian surfaces with CM by a non-biquadratic quartic CM
field $E/F$ is a union of the $0$-cycles constructed in Section 3 for
the reflex field $\tilde E/\tilde F$.

The second author would like to thank the Department of Mathematics at
the University Paris-Sud at Orsay, for their hospitality and
stimulating working environment during the month of May 2010. The
third author thanks the AMSS, the Morningside Center of Mathematics at
Beijing, and the Mathematical Science Center at Tsinghua University
for providing him excellent working conditions during his summer
visits to these institutes.

\section{\bf The  Shimura variety and its special points}
\label{sect2}

As in Section \ref{sect1}, let $F$ be a totally real number field of degree $d+1$ over $\Q$ with embeddings $\{\s_j\}_{j=0}^d$  into
$\R$.  Let $W$, $(\ ,\ )_W$ be a quadratic space over $F$ of dimension $2$ with signature
$$\sig(W) = ((0,2), (2,0), \dots, (2,0)).$$
Let $V = \RR_{F/\Q} W$ be the underlying rational vector space with bilinear  form
$(x,y)_V = \tr_{F/\Q}(x,y)_W$. There is an orthogonal direct sum
\begin{equation}\label{real.decompo}
V\ttt_\Q\R = \oplus_j   W_{\s_j}
\end{equation}
of real quadratic spaces where $W_{\s_j} = W\ttt_{F,\s_j}\R$, and $\sig(V) = (2d,2)$.
Let $G= \GSpin(V)$. Then there is a homomorphism
\begin{equation}\label{GSpinhom}
\RR_{F/\Q} \GSpin(W)\lra G
\end{equation}
of algebraic groups over $\Q$ which, on real points, gives the homomorphism
\begin{equation}\label{realpoints.hom}
\RR_{F/\Q} \GSpin(W)(\R) = \prod_j \GSpin(W_{\s_j}) \lra \GSpin(V\ttt_\Q\R) = G(\R),
\end{equation}
associated to the decomposition (\ref{real.decompo}).

\begin{lemma} Let $T$ be the inverse image
in $G$ of the subgroup $\RR_{F/\Q}\SO(W)$ of $\SO(V)$.  Then
$T$ is a maximal torus of $G$ and is the image of the homomorphism
(\ref{GSpinhom}).
\end{lemma}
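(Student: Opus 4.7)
The plan is to prove both claims by a dimension count, exploiting the fact that $W$ has $F$-dimension two, which forces $\GSpin(W)$ itself to be a torus. The key structural input is that for a nondegenerate two-dimensional $F$-quadratic space $W$, the even Clifford algebra $C^+(W)$ is the quadratic \'etale $F$-algebra $E = F(\sqrt{-\det W})$, so that
\begin{equation*}
\GSpin(W) \simeq \Res_{E/F}\G_m, \qquad \SO(W) \simeq E^1 := \ker\bigl(N_{E/F}\colon \Res_{E/F}\G_m \to \G_m\bigr).
\end{equation*}
Restriction of scalars then shows that $\Res_{F/\Q}\GSpin(W) \simeq \Res_{E/\Q}\G_m$ is a $\Q$-torus of dimension $2(d+1)$, while $\Res_{F/\Q}\SO(W)$ is a connected $\Q$-torus of dimension $d+1$.

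To see that $T$ is a maximal torus, I would pull back the central extension $1 \to \G_m \to G \to \SO(V) \to 1$ along the inclusion $\Res_{F/\Q}\SO(W) \hookrightarrow \SO(V)$. This presents $T$ as an extension of connected $\Q$-tori
\begin{equation*}
1 \to \G_m \to T \to \Res_{F/\Q}\SO(W) \to 1,
\end{equation*}
hence as a connected $\Q$-torus of dimension $d+2$. Since $G$ has absolute rank $d+2$ (the rank of $\GSpin_{2n}$ for $n=d+1$ is $n+1$), $T$ is a subtorus of maximal rank, and therefore a maximal torus of $G$.

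For the image assertion, I would compare the preceding extension with the central extension
\begin{equation*}
1 \to \Res_{F/\Q}\G_m \to \Res_{F/\Q}\GSpin(W) \to \Res_{F/\Q}\SO(W) \to 1
\end{equation*}
via the homomorphism (\ref{GSpinhom}). On the $\SO$-quotients the induced map is the identity; on the centers it is the norm map $N_{F/\Q}\colon \Res_{F/\Q}\G_m \to \G_m$, as one reads off the Clifford algebra decomposition $C(V\otimes\R) \simeq \bigotimes_j C(W_{\sigma_j})$ (graded tensor product), under which a tuple of central scalars multiplies to their product. The snake lemma then identifies the kernel of (\ref{GSpinhom}) with the norm-one subtorus of $\Res_{F/\Q}\G_m$ (of dimension $d$) and shows that (\ref{GSpinhom}) surjects onto $T$, giving the equality of image and $T$. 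The main delicate point is verifying that the map on centers is precisely $N_{F/\Q}$; once this is granted, the snake lemma conclusion is automatic, so this is the single place where I would need to unwind the explicit realization of (\ref{realpoints.hom}) through the inclusions $C(W_{\sigma_j}) \hookrightarrow C(V\otimes\R)$.
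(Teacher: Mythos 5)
The paper gives no proof of this lemma, but the discussion immediately following it records exactly the ingredients your argument produces: the even Clifford algebra $C^0_F(W)=E$, the exact sequence $1\to\G_m\to T\to\Res_{F/\Q}\SO(W)\to1$, and the explicit description $E^\times\to E^\times/F^1\to E^\times/F^\times$ on rational points, where $F^1=\ker N_{F/\Q}$. Your snake-lemma computation recovering $\ker = \ker N_{F/\Q}$ and surjectivity matches this precisely, and the identification of the map on centers with $N_{F/\Q}$ via the graded tensor decomposition $C(V\otimes\R)\simeq\widehat{\bigotimes}_j C(W_{\sigma_j})$ is the right key step, so your proposal is a correct fleshing-out of the argument the paper leaves implicit.

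One small logical point worth tightening: the inference that the central extension $1\to\G_m\to T\to\Res_{F/\Q}\SO(W)\to1$ makes $T$ a torus is true but not tautological; it uses that any bi-multiplicative pairing $S\times S\to\G_m$ of tori is trivial (so $T$ is commutative) and that $T$ then has no unipotents since $\G_m$ and $S$ have none. Alternatively, you can avoid this entirely by proving the image claim first: once $T$ is exhibited as the image of the torus $\Res_{E/\Q}\G_m$ under (\ref{GSpinhom}), it is automatically a torus, and the dimension count then gives maximality.
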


Note that there is thus an exact sequence
\begin{equation}\label{exactT}
1\lra\G_m\lra T\lra \RR_{F/\Q}\SO(W)\lra 1
\end{equation}
of algebraic groups over $\Q$, where $\G_m$ is the kernel of the homomorphism
$\GSpin(V)\rightarrow \SO(V)$.

A more explicit description of $T$ can be given as follows. The even part $C^0_F(W)=E$
of the Clifford algebra of $W$ over $F$ is a CM field of degree $2d+2$ over $\Q$.
The odd part of the Clifford algebra $C^1_F(W)=W = E w_0$ is a one dimensional vector space
over $E$ with quadratic form $Q_W(a w_0 ) = \alpha\text{\rm N}_{E/F}(a)$, where $\alpha = Q_W(w_0) \in F^\times$
is an element with $\s_0(\alpha)<0$ and $\s_j(\alpha)>0$ for $j\ge 1$.
Then, on rational points, we have
$$
\begin{matrix}
\RR_{F/\Q}\GSpin(W)(\Q) &\longrightarrow&T(\Q)&\longrightarrow& \RR_{F/\Q}\SO(W)(\Q)\\
\noalign{\smallskip}
\Vert&{}&\Vert&{}&\Vert\\
\noalign{\smallskip}
E^\times&\longrightarrow&E^\times/F^1&\longrightarrow&E^\times/F^\times
\end{matrix}$$
where $ E^\times /F^\times\simeq E^1$, via $\beta \mapsto \beta/\bar{\beta}$ is the kernel of $\text{\rm N}_{E/F}$,
and $F^1$ is the kernel of $N_{F/\Q}$.

Fixing an identification $ \mathbb S=\Res_{\mathbb
C/\mathbb R} \mathbb G_m \simeq \Gspin(W_{\sigma_0})$,  we obtain a homomorphism
$h_0: \mathbb S \rightarrow G_\R$
of  algebraic groups over $\R$
corresponding to the inclusion in the first factor in (\ref{realpoints.hom}).
Let $\mathbb D$ be the $G(\mathbb R)$-conjugacy class of $h_0$. Let $\{e_0, f_0\}$ be a  standard
basis of $W_0 \subset V\ttt_\Q\R$. Then it is easy to check
$$
gh_0 g^{-1} \mapsto \mathbb R ge_0 + \mathbb R gf_0
$$
gives a bijection between $\mathbb D$ and the set of oriented
negative $2$-planes in $V\ttt_\Q\R$. We will not distinguish between the
two interpretations of $\mathbb D$. Note that the choice of orientation determined by $\{ e_0,f_0\}$
is equivalent to the choice of an extension of $\s_0$ to an embedding of $E$ into $\C$, which we also
denote by $\s_0$.

Let $K$ be a compact open
subgroup of $G(\hat{\mathbb Q})$, where $\hat F$ stands for the
finite adeles of a number field $F$. Let $X_K = \text{\rm Sh}(G,h_0)_K$ be the canonical model of the Shimura
variety over $\mathbb Q$ with
$$
X_K(\mathbb C) = G(\mathbb Q)  \backslash\big(\, \mathbb D \times
G(\hat{\mathbb Q})/K\,\big).
$$

%


By construction, the homomorphism $h_0$ factors through $T_\R$ and is fixed by conjugation by $T(\R)$,
so we have, for any $g\in G(\hat\Q)$,  a special $0$-cycle
in $X_K$ according to \cite[Page 325]{milne}
\begin{equation}\label{ZTpoints}
Z(T, h_0,g)_K = T(\mathbb Q)\backslash\big(\, \{h_0\}\times T(\hat{\mathbb
Q})/K^g_T\,\big) \rightarrow X_K, \qquad [h_0,t] \mapsto [h_0,tg]
\end{equation}
where $K^g_T=T(\hat{\mathbb Q}) \cap gKg^{-1}$.  Note that $K_T^g$ depends only on the image of
$g$ in $\SO(V)(\hat \Q)$.  We will usually drop the subscript $K$ and identify $Z(T, h_0,g)$
with its image in $X_K$, but every  point in $Z(T, h_0,g)$ is counted
with multiplicity $\frac{2}{w_{K, T,g}}$ and $w_{K, T,g}=\# (T(\mathbb
Q) \cap gKg^{-1}) $. In
particular, for a function $f $ on $X_K$, we have
\begin{equation}\label{weighted.sum}
f(Z(T, h_0,g)) =\frac{2}{w_{K, T,g}} \sum_{t\in T(\mathbb Q) \backslash
T(\A_f)/K^g_T} f(h_0, tg).
\end{equation}
When $g=1$, we will further abbreviate notation and write $Z(T,h_0)$ for $Z(T,h_0,1)$.

The $0$-cycle $Z(T, h_0)$ is defined over $\sigma_0(E)$, the reflex field of
$(T, h_0)$.  We next describe its Galois
conjugates $\tau(Z(T, h_0))$ for $\tau \in \Aut(\mathbb C/\mathbb
Q)$.

For $j\in \{ 0, \dots, d\}$, let $W(j)$ be the unique (up to isomorphism) quadratic space over $F$ such that $W(j)\otimes_F F_v$
and $W\otimes_F F_v$ are isometric for all finite place $v$ of $F$, and such that
\begin{equation}
\sig(W(j)) = ((2,0), \dots,(2,0), \underset{j}{(0,2)},(2,0) \dots, (2,0)).
\end{equation}
Note that, although the quadratic spaces $W=W(0)$ and $W(j)$ over
$F$ are not isomorphic for $j\ne0$, there is an isomorphism $C^0_F(W(j))\simeq
C^0_F(W)=E$ of their even Clifford algebras. Let
$V(j)=\RR_{F/\Q} W(j)$ with bilinear form $ (x,y)_{V(j)}=
\tr_{F/\mathbb Q} (x,y)_{W(j)}. $ The signature of $V(j)$ is
$(2d,2)$ and the quadratic spaces $V(j)$ and $V$ are isomorphic.  We
fix an isomorphism
\begin{equation}
V(j) \overset{\sim}{\longrightarrow} V
\end{equation}
 and hereafter identity $V(j)$ with $V$. Let $T(j)$ be the preimage of $\Res_{F/\mathbb Q} \SO(W(j))
\subset \SO(V)$ in $G$ and let $h_0(j): \mathbb S \rightarrow
G_\mathbb R$ be the homomorphism defined, as above, by an
identification of $\mathbb S$ with
$\Gspin(W(j)\otimes_{F,\sigma_j}\mathbb R)$. For $g\in G(\hat \Q)$, the analogue of the
construction above yields a special $0$-cycle $Z(T(j), h_0(j),g)$ on
$X_K$ defined over $\sigma_j(E)$.

We fix an $\hat F$-linear isometry
\begin{equation}
\mu_j: W(j)(\hat F) \overset{\sim}{\longrightarrow} W(\hat F).
\end{equation}
Noting that there are canonical identifications $W(j)(\hat F) = V(j)(\hat\Q)$
and $W(\hat F) = V( \hat\Q)$, and using the fixed identification of $V$ and $V(j)$,
there is a unique element $g_{j,0} \in \text{\rm O}(V)(\hat\Q)$ such that  the diagram
\begin{equation}\label{seesaw.relations}
\begin{matrix}
W(j)(\hat F)& \overset{\mu_j}{\longrightarrow} & W(\hat F)\\
\noalign{\smallskip}
||&{}&||\\
\noalign{\smallskip}
V(\hat \Q)& \overset{g_{j,0}^{-1}}{\longrightarrow} & V(\hat\Q)
\end{matrix}
\end{equation}
Modifying the isometry $\mu_j$ by an element of $\text{\rm O}(W)(\hat F)$, if necessary, we can
assume that $g_{j,0}\in \SO(V)(\hat\Q)$.  For any element $g_j\in G(\hat \Q)$ with image $g_{j,0}$ in
$\SO(V)(\hat\Q)$,
the finite adele points of the tori $T(j)$ and $T$ are related, as subgroups of $G(\hat\Q)$,  by
\begin{equation}\label{conjTjT}
T(j)(\hat\Q) = g_j T(\hat\Q) g_j^{-1},
\end{equation}
and hence
\begin{equation}\label{conjKjK}
K_{T(j)}^{g_j} = g_j K_T g_j^{-1}.
\end{equation}
These relations depend only on the image $g_{j,0}$ of $g_j$.

The reciprocity laws for the
action of $\Aut(\C)$ on special points of Shimura varieties
\cite{milne.shih.I}, \cite{milne.shih.II}, \cite{milne}, yields the following result.

\begin{lemma}\label{bymilne} Let the notation be as above and let $\tau \in
\Aut(\mathbb C/\mathbb Q)$.\hfill\break
(1) If $\tau =\sigma_j \circ \sigma_0^{-1}$ on $\sigma_0(E)$, then there is a preimage $g_j$
of $g_{j,0}$, unique up to an element of $\Q^\times$, such that
$$
\tau(Z(T, h_0)) = Z(T(j), h_0(j),g_j).
$$
(2) If $\tau =\rho$ is complex conjugation,
then
$$
\tau(Z(T, h_0)) = Z(T, h_0^-).
$$
Here $h_0^-$ is the map from $\mathbb S$ to $G_\mathbb R$ induced
by $\mathbb S \rightarrow \Gspin(W_{\sigma_0}), z \mapsto \bar z$.
\end{lemma}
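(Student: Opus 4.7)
The plan is to apply Shimura's reciprocity law for canonical models as developed by Deligne, Milne and Milne--Shih \cite{milne.shih.I,milne.shih.II,milne}. By construction, $Z(T,h_0)$ is a CM $0$-cycle on $X_K$ with reflex field $\sigma_0(E)$, and under any $\tau\in\Aut(\C/\Q)$ its image must again be a CM $0$-cycle of the form $Z(T',h',g)$ for some maximal torus $T'\subset G$, a Hodge homomorphism $h'\colon\mathbb{S}\to T'_\R$, and some $g\in G(\hat\Q)$. My task is to identify this triple explicitly in each of the two cases.

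For part (1), I would first observe that $\tau=\sigma_j\circ\sigma_0^{-1}$ interchanges the embeddings $\sigma_0$ and $\sigma_j$ of $E$ into $\C$. Under the canonical identification $C^0_F(W(j))\simeq C^0_F(W)=E$ this exchange is precisely what distinguishes $h_0(j)$ from $h_0$: it reassigns the negative line in the real decomposition (\ref{real.decompo}) from the $\sigma_0$-component to the $\sigma_j$-component. The seesaw diagram (\ref{seesaw.relations}) produces the ad\`elic element $g_{j,0}\in\SO(V)(\hat\Q)$, and by (\ref{conjTjT})--(\ref{conjKjK}) any lift $g_j\in G(\hat\Q)$ of $g_{j,0}$ transports $(T,K_T)$ to $(T(j),K_{T(j)}^{g_j})$ at the finite-ad\`elic level, while the indeterminacy of the lift modulo $\Q^\times$ reflects the kernel $\G_m$ of $G\to\SO(V)$. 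Invoking reciprocity for the morphism of Shimura data $(T,\{h_0^\pm\})\hookrightarrow(G,\mathbb D)$ would then yield the stated identification $\tau(Z(T,h_0))=Z(T(j),h_0(j),g_j)$.

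For part (2), I would note that complex conjugation $\rho$ acts on $\mathbb D$ by reversing the orientation of the negative $2$-plane $W_{\sigma_0}$, which at the level of homomorphisms $\mathbb{S}\to G_\R$ amounts to precomposition with the nontrivial automorphism $z\mapsto\bar z$ of $\mathbb{S}$. By definition this sends $h_0$ to $h_0^-$, while the torus $T$ and the level subgroup $K_T$ are preserved; hence $\rho(Z(T,h_0))=Z(T,h_0^-)$.

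The main obstacle will be the precise identification in part (1) of the Shimura reciprocity element for $\tau$ with the ad\`elic element $g_{j,0}$ coming from the seesaw diagram (\ref{seesaw.relations}). This amounts to unwinding the Artin reciprocity map for the CM torus $T$ and verifying that its action at each finite place is implemented by the local components of the isometries $\mu_j$; all other assertions reduce to formal properties of canonical models of Shimura varieties.
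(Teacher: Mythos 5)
The paper does not actually prove this lemma; it simply asserts that it follows from the reciprocity laws of Milne and Milne--Shih and cites \cite{milne.shih.I}, \cite{milne.shih.II}, \cite{milne}. Your proposal invokes exactly the same machinery and correctly locates the only nontrivial content, namely the identification of the reciprocity twist with the ad\`elic element $g_{j,0}$ coming from the seesaw diagram (\ref{seesaw.relations}), so in substance you are following the paper's route.

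Two small imprecisions worth flagging. First, you say the indeterminacy of the lift modulo $\Q^\times$ ``reflects the kernel $\G_m$ of $G\to\SO(V)$,'' but preimages of $g_{j,0}$ in $G(\hat\Q)$ differ by $\G_m(\hat\Q)=\hat\Q^\times$, not $\Q^\times$; the finer $\Q^\times$-uniqueness in the statement is an output of the explicit reciprocity computation (the Taniyama element lands in $T(\hat\Q)$ and is well-defined modulo $T(\Q)$), not a direct reflection of the kernel of $G(\hat\Q)\to\SO(V)(\hat\Q)$. Second, in part (2) the heuristic that ``complex conjugation acts on $\mathbb D$ by reversing orientation'' conflates the antiholomorphic involution of $X_K(\C)$ coming from the $\Q$-structure with the action of an orientation-reversing element of $G(\R)$; these agree for CM points, but that agreement is itself a consequence of the reciprocity law, so the argument as written is circular if taken at face value. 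Neither issue changes the overall conclusion: since the paper itself leaves these verifications to the references, your sketch is at the same level of rigor and takes the same path.
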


We will write
$$Z(T(j), h_0^\pm(j),g_j)= Z(T(j), h_0^+(j),g_j) + Z(T(j), h_0^-(j),g_j).$$
We will also write $z_0^\pm(j)\in \mathbb D$ for the oriented
negative two planes in $V(\mathbb R)$ associated  $h_0^\pm(j)$. Let
\begin{equation} \label{def:CM}
Z(W) = \sum_{j=0}^d Z(T(j), z_0^\pm(j),g_j) \in Z^{2d}(X_K)
\end{equation}
Then $Z(W)$ is a $0$-cycle defined over $\mathbb Q$.

\section{\bf Special divisors and automorphic  Green functions}
\label{sect3}

In this section, we briefly review the special divisors defined in
\cite{KuDuke} and their `automorphic' Green functions
defined by the first author and Funke using regularized theta liftings
\cite{Br}, \cite{BF}. We prove that these special cycles do not intersect with
the special cycles defined in Section \ref{sect2}.

 Let $x\in V(\Q)$ be a vector of positive norm. We write
$V_x$ for the orthogonal complement of $x$ in $V$ and $G_x$ for
the stabilizer of $x$ in $G$. So $G_x\cong \GSpin(V_x)$. The
sub-Grassmannian
\begin{align}
\label{eq:dx} \D_{x}=\{ z\in \D;\;\text{$z\perp x$}\}
\end{align}
defines an analytic divisor of $\D$. For $g\in G(\widehat \Q)$ we
consider the natural map
\begin{align} \label{eqY4.3}
G_x(\Q)\bs \D_x\times G_x(\widehat \Q)/(G_x(\widehat \Q)\cap gKg^{-1})
\longrightarrow X_K,\quad (z,g_1)\mapsto (z,g_1 g).
\end{align}
Its image defines a divisor $Z(x,g)$ on $X_K$, which is rational
over $\Q$. For $m\in \Q_{>0}$ and $\ph \in S(V(\widehat{\mathbb Q}))^K$, if there is an $x_0 \in V(\mathbb Q)$ with $Q(x_0)=m$, we
define the weighted cycle
\begin{equation}
Z(m, \ph)= \sum_{g \in G_{x_0}(\widehat\Q)\backslash G(\widehat{\mathbb Q})/K}  \ph(g^{-1}x_0) Z(x_0,g).
\end{equation}
It is a divisor on $X_K$ with complex coefficients. Note that,  since $\ph$ has compact support in $V(\widehat\Q)$ and the orbits of
$K$ on the compact set $G(\widehat \Q)\cdot x_0\cap \supp(\ph)$ are open,
the sum is finite. If there is no $x_0\in
V(\mathbb Q)$ such that $Q(x_0)= m$, we set $Z(m, \ph)=0$.

\begin{proposition} Let the notation be as above. Then $Z(m,\ph)$ and $Z(T(j), h_0^\pm(j),g_j)$ do not intersect in $X_K$.
\end{proposition}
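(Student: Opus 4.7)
The plan is to argue by contradiction at the level of the underlying symmetric space. Suppose $Z(m,\varphi)$ and $Z(T(j),h_0^\pm(j),g_j)$ share a point in $X_K$. Tracing through the definitions, this means there exist $x_0\in V(\Q)$ with $Q(x_0)=m>0$ and $\varphi(g^{-1}x_0)\ne 0$, an element $g_1\in G_{x_0}(\widehat\Q)$, an element $t\in T(j)(\widehat\Q)$, and $\gamma\in G(\Q)$ such that
\[
\gamma\cdot z_0^\pm(j)=z,\qquad \gamma\cdot t g_j\in g_1 g K,
\]
where $z\in\D_{x_0}$ is the negative 2-plane orthogonal to $x_0$. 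Setting $y=\gamma^{-1}x_0\in V(\Q)$, we have $Q(y)=m>0$, and since $\gamma$ acts by isometries, $y$ is orthogonal to the fixed negative 2-plane $z_0^\pm(j)$ inside $V(\R)$.

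Next I would identify $z_0^\pm(j)$ with $W(j)_{\sigma_j}\subset V(\R)$ under the fixed isomorphism $V(j)=\Res_{F/\Q}W(j)\simeq V$. The real decomposition $V(\R)=\bigoplus_{k=0}^{d}W(j)_{\sigma_k}$ is orthogonal, and the form on the factor $W(j)_{\sigma_j}$ is (negative) definite, hence nondegenerate. Writing $y=\sum_k \sigma_k(y)$ with $\sigma_k(y)\in W(j)_{\sigma_k}$, the assumption $y\perp W(j)_{\sigma_j}$ forces
\[
\sigma_j(y)=0\quad\text{in}\quad W(j)\otimes_{F,\sigma_j}\R .
\]

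Finally, the inclusion $W(j)\hookrightarrow W(j)\otimes_{F,\sigma_j}\R$ is injective, since $\sigma_j\colon F\to\R$ is a field embedding (so $\R$ is faithfully flat over $F$ via $\sigma_j$), and $W(j)$ is a free $F$-module of rank $2$. Therefore $y=0$, contradicting $Q(y)=m>0$. This contradiction rules out the assumed intersection point.

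The only place that requires genuine care is step two, namely extracting from a coincidence of classes in the double quotient $G(\Q)\backslash(\D\times G(\widehat\Q)/K)$ an actual equality $\gamma\cdot z_0^\pm(j)=z$ in $\D$ (and thus a rational vector $y$ with the desired orthogonality), as opposed to a statement only in some adelic quotient; once that translation is done, the linear-algebra argument using the definiteness of each real factor $W(j)_{\sigma_k}$ for $k\ne j$ and the signature $(0,2)$ of $W(j)_{\sigma_j}$ makes the conclusion immediate. I do not foresee any further obstruction.
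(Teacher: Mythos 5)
Your proof is correct and follows essentially the same route as the paper: pass to a rational vector $y$ of positive norm orthogonal to the CM negative $2$-plane, observe that orthogonality to the (nondegenerate, definite) factor $W(j)_{\sigma_j}$ kills the $\sigma_j$-component, and conclude $y=0$ from injectivity of $W(j)\hookrightarrow W(j)\otimes_{F,\sigma_j}\R$. The only difference is cosmetic — you set $y=\gamma^{-1}x_0$ where the paper sets $y=\gamma x$ — and you spell out the final injectivity step, which the paper leaves implicit.
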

\begin{proof} It suffices to show that $Z(x, g_1) \cap Z(T,
h_0,g_2)$ is empty for every $x \in V(\mathbb Q)$ with $Q(x)>0$ and $g_1, g_2
\in G(\widehat{\mathbb Q})$. Suppose $P=[z, hg_1]=[z_0, tg_2]$ is in the
intersection, where $z_0=\mathbb R e_0 +\mathbb R f_0$ is the
negative two plane associated to $h_0$, and $z$ is a negative
two-plane in $V(\mathbb R)$ which is orthogonal to $x$.  Then
there are $\gamma \in G(\mathbb Q)$ and $k \in K$ such that
$$
(\gamma)_\infty z = z_0,  \quad \hat{\gamma}h g_1 k =tg_2.
$$
Here $\hat{\gamma}$ is the image of $\gamma$ in $G(\widehat{\mathbb Q})$.
Let $y = \gamma x \in V(\mathbb Q)$. Then $x\perp z$ implies that $y
\perp z_0$, i.e., $(\sigma_0(y), e_0)=(\sigma_0(y), f_0)=0$. This
implies that $\sigma_0(y)=0$ and thus $y=0$, a contradiction.
\end{proof}

Let $L$ be an even integral lattice in $V$, i.e., $Q(x)=\frac12(x,x) \in
\mathbb Z$ for $x \in L$, and let
$$
L'=\{ y \in V:\, (x, y) \in \mathbb Z, \hbox{ for } x \in L \}
\supset L
$$
be its dual. For $\mu \in L'/L$, we write $\ph_\mu =\cha(\mu + \hat
L) \in S(V(\widehat{\mathbb Q}))$ and $Z(m, \mu) = Z(m, \ph_\mu)$,
where $\hat L =L \otimes \widehat{\mathbb Z}$. Associated to the
reductive dual pair $(\SL_2,\Orth(V))$ there is a  Weil representation
$\omega=\omega_{\psi}$ of $\SL_2(\A)$ on the Schwartz space
$S(V(\A))$, where $\psi$ is the `canonical' unramified additive
character of $\mathbb Q \backslash \A$ with $\psi_\infty(x) =e(x)$.
Since the subspace $S_L=\oplus\, \mathbb C \ph_\mu \subset
S(V(\widehat\Q))$ is preserved by the action of $\SL_2(\widehat\Z)$,
there is a representation $\rho_L$ of $\Gamma=\SL_2(\mathbb Z)$ on
this space defined by the formula
$$
\rho_L(\gamma) \ph =\bar{\omega} (\hat\gamma) \ph
$$
where $\hat\gamma \in \SL_2(\hat{\mathbb Z})$ is the image of
$\gamma$. This representation is given explicitly by Borcherds as
\begin{align}
\label{eq:weilt}
\rho_L(T)(\ph_\mu)&=e(Q(\mu^2))\,\ph_\mu,\\
\label{eq:weils}
\rho_L(S)(\ph_\mu)&=\frac{e((2-n)/8)}{\sqrt{|L'/L|}} \sum_{\nu\in
L'/L} e(-(\mu,\nu))\, \ph_\nu,
\end{align}
where
$T = \begin{pmatrix} 1&1\\0&1\end{pmatrix}$ and $S= \begin{pmatrix}0&1\\-1&0\end{pmatrix}$,
see e.g. \cite{Bo1}, \cite{KuIntegral}, \cite{Br}.  Note that the complex conjugate $\bar\rho_L$ is thus the restriction of
$\omega$ to the subgroup $\SL_2(\Z)\subset \SL_2(\widehat\Z)$.


 Recall that a smooth function  $f:\H\to S_L$ is called a {\em harmonic weak Maass form}
(of weight $k$ with respect to $\Gamma$ and ${\rho}_L$) if it
satisfies:
\begin{enumerate}
\item[(i)] $f \mid_{k,{\rho}_L} \gamma= f$
for all $\gamma\in \Gamma$; i.e.,
$$
f(\gamma \tau) =(c \tau + d)^k \rho_L(\gamma)f(\tau).
$$

\item[(ii)] there is a $S_L$-valued Fourier polynomial
\[
P_f(\tau)=\sum_{\mu\in L'/L}\sum_{n\leq 0} c^+(n,\mu)\, q^n\,\ph_\mu
\]
such that $f(\tau)-P_f(\tau)=O(e^{-\eps v})$ as $v\to \infty$ for
some $\eps>0$;\item[(iii)] $\Delta_k f=0$, where
\begin{align*}
\Delta_k := -v^2\left( \frac{\partial^2}{\partial u^2}+
\frac{\partial^2}{\partial v^2}\right) + ikv\left(
\frac{\partial}{\partial u}+i \frac{\partial}{\partial v}\right)
\end{align*}
is the usual weight $k$ hyperbolic Laplace operator (see
\cite{BF}).
\end{enumerate}
The Fourier polynomial $P_f$  is called the {\em principal part}
of $f$. We denote the vector space of these harmonic weak Maass
forms by  $H_{k,\rho_L}$. Any weakly holomorphic modular form is a
harmonic weak Maass form. The Fourier expansion of any $f\in
H_{k,\rho_L}$ gives a unique decomposition $f=f^++f^-$, where
\begin{subequations}
\label{deff}
\begin{align}
\label{deff+}
f^+(\tau)&= \sum_{\mu\in L'/L}\sum_{\substack{n\in \Q\\ n\gg-\infty}} c^+(n,\mu)\, q^n\,\ph_\mu,\\
\label{deff-} f^-(\tau)&= \sum_{\mu\in L'/L} \sum_{\substack{n\in \Q\\
n< 0}} c^-(n,\mu)\,\Gamma(1-k, 4 \pi |n| v)\, q^n\, \ph_\mu,
\end{align}
\end{subequations}
and,  for $a >0$, $\Gamma(s, a)= \int_{a}^\infty e^{-t}t^{s-1}\, dt$  is the
incomplete $\Gamma$-function. We refer to $f^+$ as the
{\em holomorphic part} and to $f^-$ as the {\em non-holomorphic
part} of $f$.

Recall that there is an antilinear  differential operator $\xi=
\xi_k:H_{k,\rho_L}\to S_{2-k,\bar\rho_L}$, defined by
\begin{equation}
\label{defxi} f(\tau)\mapsto \xi(f)(\tau):=2i v^{k}
\overline{\frac{\partial}{\partial\bar \tau} f(\tau)}.
\end{equation}
 By  \cite[Corollary~3.8]{BF}, one has  the exact
sequence
\begin{gather}
\label{ex-sequ}
\xymatrix{ 0\ar[r]& M^!_{k,\rho_L} \ar[r]& H_{k,\rho_L}
\ar[r]^{\xi}& S_{2-k,\bar\rho_L} \ar[r] & 0 }.
\end{gather}

Let $f\in H_{1-d,\bar\rho_L}$ be a harmonic weak Maass form of
weight $1-d$ with representation $\bar\rho_L$ for $\Gamma$, and
denote its Fourier expansion as in \eqref{deff}. Let $S_L^\vee$ be
the dual space of $S_L$---the space of linear functionals on $S_L$,
and let $\{ \ph_\mu^\vee \} $ be the dual  basis in $S_L^\vee$ of
the basis $\{\ph_\mu \}$ of $S_L$.  Recall that   the Siegel theta
function
$$
\theta_L(\tau, z, g) =\sum_{\mu} \theta(\tau, z, g,   \ph_\mu)\,
\ph_\mu^\vee
$$
is an $S_L^\vee$-valued holomorphic modular form of weight $d-1$ for
$\Gamma$ and $\rho_L $ defined as follows (see \cite[Section 2]{BY2}
or \cite{KuIntegral} for details). For $z \in \mathbb D$, one has
decomposition
$$
V(\mathbb R) = z \oplus z^\perp, \quad x =x_{z} + x_{z^\perp}.
$$
Let $(x, x)_z= -(x_z, x_z) + (x_{z^\perp}, x_{z^{\perp}})$ and
define the associated Gaussian by
\begin{equation}\label{gaussian}
\ph_\infty(x, z) = e^{-\pi (x, x)_z}.
\end{equation}
Then, for $\tau \in \mathbb H$, $[z, g]\in X_K$, and $\ph \in
S(V(\widehat{\mathbb Q}))$,  the theta function is given by
$$
\theta(\tau, z, g, \ph) = v^{\frac{1}2(1-d)}\sum_{x \in V(\mathbb
Q)} \omega (g'_\tau) \ph_\infty(x, z) \ph(g^{-1}x), \qquad g'_\tau =
\begin{pmatrix}
v^{\frac12}&uv^{-\frac12}\\{}&v^{-\frac12}\end{pmatrix}\in
\SL_2(\R).
$$
Here $g$ acts on $V$ via its image in $\SO(V)$.

 We consider the regularized theta integral
\begin{align}
\label{reg1} \Phi(z,g,f)=\int_{\calF}^{reg} \langle f(\tau),
\theta_L(\tau,z,g)\rangle\,d\mu(\tau)
\end{align}
for $z\in \D$ and $g\in G(\widehat\Q)$, where $\mathcal F$ is the
standard domain for $\SL_2(\mathbb Z) \backslash \mathbb H$. The
integral is regularized as in \cite{Bo1}, \cite{BF}, that is,
$\Phi(z,g,f)$ is defined as the constant term in the Laurent
expansion at $s=0$ of the function
\begin{align}
\label{reg2} \lim_{T\to \infty}\int_{\calF_T} \langle f(\tau),
\theta_L(\tau,z,g)\rangle\,v^{-s} d\mu(\tau).
\end{align}
Here $\calF_T=\{\tau\in \H; \; \text{$|u|\leq 1/2$, $|\tau|\geq 1$,
and $v\leq T$}\}$ denotes the truncated fundamental domain and the
integrand \begin{equation} \langle f(\tau),\theta_L(\tau,z,g)\rangle
=\sum_{\mu \in L'/L} f_\mu(\tau) \theta(\tau, z, g, \ph_\mu)
\end{equation}
is the pairing of $f$ with the Siegel theta function, viewed as a
linear functional on the space $S_L$.

The following theorem summarizes some properties of the function
$\Phi(z,g,f)$ in the setup of the present paper (see \cite{Br},
\cite{BF}).

\begin{theorem} \label{theoY4.2}
The function $\Phi(z,g,f)$ is smooth on $X_K\bs Z(f)$, where
\begin{align}
\label{eq:zf}
Z(f)=\sum_{\mu\in L'/L}\sum_{m>0} c^+ (-m,\mu) Z(m,\mu).
\end{align}
It has a logarithmic singularity along the divisor $-2Z(f)$. The
$(1,1)$-form $dd^c \Phi(z,g,f)$ can be continued to a smooth form
on all of $X_K$. We have the Green current equation
\begin{align} \label{eqY4.10}
dd^c[\Phi(z,g,f)]+\delta_{Z(f)}=[dd^c \Phi(z,g,f)],
\end{align}
where $\delta_Z$ denotes the Dirac current of a divisor $Z$.
Moreover, if $\Delta_z$ denotes the invariant Laplace operator on
$\D$, normalized as in \cite{Br}, we have
\begin{align}
\Delta_z  \Phi(z,g,f) = \frac{n}{4}\cdot c^+(0,0).
\end{align}
\end{theorem}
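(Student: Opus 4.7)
The plan is to reduce the theorem to the two foundational analyses in the references cited: the regularization and unfolding technique of Borcherds \cite{Bo1} (recast in vector-valued form as in \cite{Br}), and the extension to general harmonic weak Maass forms carried out by the first author and Funke in \cite{BF}. Using the splitting $f = f^+ + f^-$ from \eqref{deff}, I would analyze the regularized integral \eqref{reg1} by treating the two parts separately, since each gives information about a different aspect of the statement.

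For the holomorphic part $f^+$, I would perform the standard theta unfolding on the truncated domain $\mathcal{F}_T$ in \eqref{reg2}. Writing $\theta(\tau,z,g,\varphi_\mu)$ as a sum over $V(\mathbb{Q})$ and organizing the sum by $Q(x)$, the unfolding produces three types of contributions: the zero vector contributes a term involving $c^+(0,0)$ and no singularity; vectors of positive norm $m$ produce, via the Fourier pairing with the principal part coefficients $c^+(-m,\mu)$, a local logarithmic singularity exactly on the locus $\mathbb{D}_x$ where $z \perp x$, i.e., along the divisor $Z(m,\mu)$ with multiplicity $-2c^+(-m,\mu)$; vectors of negative norm contribute smoothly after regularization. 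Summing over $m, \mu$ yields the logarithmic singularity $-2Z(f)$ claimed in the theorem.

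For the non-holomorphic part $f^-$, built from incomplete Gamma functions as in \eqref{deff-}, the analysis of \cite[Sect.~3]{BF} shows that the corresponding regularized integral converges to a smooth function on all of $X_K$ and contributes nothing to the singular locus. Combining this with the previous paragraph yields the smoothness assertion on $X_K\setminus Z(f)$, the precise form of the singularity, and, via Poincaré--Lelong, the Green current equation \eqref{eqY4.10}. The smooth extension of $dd^c\Phi(z,g,f)$ to all of $X_K$ can then be read off from the explicit local form of the singular contribution near each component of $Z(f)$, where the Poincaré--Lelong term cancels the $\delta$-distribution part.

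For the Laplacian identity, the key input is the pair of differential equations satisfied by the Siegel theta kernel: with $n = \dim V = 2d+2$, the kernel $\theta_L(\tau,z,g)$ satisfies a relation that expresses $\Delta_z \theta_L$ in terms of weight-shifting (Maass raising/lowering) operators applied in the variable $\tau$, up to the normalization constant $n/4$. Pairing this with $f$ and integrating by parts against the regularized domain $\mathcal{F}_T$, the weight-$(1-d)$ operator $\Delta_{1-d}$ moves onto $f$ and annihilates it by harmonicity, leaving only boundary contributions as $T\to\infty$. A careful bookkeeping of these boundary terms, in the spirit of the regularization procedure, isolates the constant term of $\langle f, \theta_L\rangle$, which by the Fourier expansion of $\theta_L$ is precisely proportional to $c^+(0,0)$, yielding the stated formula. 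The main obstacle in turning this plan into a complete proof is controlling the regularization: one must justify the interchange of $\Delta_z$ with the regularized integral and show that the boundary terms picked up at the truncation $v=T$ and near the cusp of $\mathcal{F}$ do not contribute, which is the technical core of \cite{Br} and \cite{BF}.
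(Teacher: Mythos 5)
The paper does not prove Theorem \ref{theoY4.2}; it is explicitly presented as a summary of results drawn from \cite{Br} and \cite{BF}, specialized to the signature $(2d,2)$ setting. So there is no ``paper's own proof'' to compare against --- the relevant comparison is with the cited literature, which is exactly what you reconstruct.

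Your sketch correctly identifies the two-stage architecture of the argument in \cite{Br} and \cite{BF}: the decomposition $f=f^++f^-$, the unfolding and local analysis near the positive-norm vectors that produces the logarithmic singularity of multiplicity $-2c^+(-m,\mu)$ along $Z(m,\mu)$, the smoothness of the $f^-$-contribution, the Poincar\'e--Lelong step yielding \eqref{eqY4.10} together with the smooth extension of $dd^c\Phi$, and the differential-equation-plus-Stokes argument for the Laplacian identity. Two small precisions worth keeping in mind if you were to flesh this out: first, the constant $n$ in $\Delta_z\Phi=\tfrac{n}{4}c^+(0,0)$ is indeed $\dim V=2d+2$, but the $\tfrac{n}{4}$ depends on the specific normalization of the invariant Laplacian $\Delta_z$ in \cite{Br}, so one must match that normalization rather than derive the constant from scratch; second, the assertion that ``vectors of negative norm contribute smoothly'' glosses over the fact that these terms still require the regularization to converge at the cusp (this is precisely where the constant-term-in-the-Laurent-expansion device in \eqref{reg2} earns its keep), even though they do not affect the singular locus in $z$. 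Neither point is a gap in the plan; they are the technical core of the cited references, which you correctly flag as the main difficulty.
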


In particular, the theorem implies that $\Phi(z,g,f)$ a Green
function for the divisor $Z(f)$ in the sense of Arakelov geometry in
the normalization of \cite{SABK}. (If the constant term $c^+(0,0)$
of $f$ does not vanish, one actually has to work with the
generalization of Arakelov geometry given in \cite{BKK}.) Moreover,
we see that $\Phi(z,g,f)$ is harmonic when $c^+(0,0)=0$. Therefore,
it is called the {\em automorphic Green function} associated with
$Z(f)$. Notice also that $ Z(f)$ has coefficients in $\mathbb
Q(f)$, the field generated by the $c(-m, \mu)$, $m >0$.

\section{\bf CM values of Siegel theta  functions and Eisenstein series}
\label{sect4}

Recall that, for each $j$,  we have fixed an isomorphism $V\simeq V(j)=\RR_{F/\mathbb Q} W(j)$
of rational quadratic spaces, and hence an identification
\begin{equation}\label{identify}
S(V(\A_\Q)) = S(W(j)(\A_F)),\qquad \ph \mapsto \ph_{F,j}
\end{equation}
of the corresponding Schwartz spaces. For example, if $\ph_F=\ttt_w\ph_{F,w}\in S(W(j)(\A_F))$, with $w$ running over the places of
$F$, then the corresponding $\ph\in S(V(\A_\Q))$ is also factorizable, with local component
$\ph_v = \ttt_{w\mid v} \ph_{F,w}$ in the space
$$S(\RR_{F/\Q}W(j)(\Q_v)) = S(\oplus_{w\mid v} W(j)(F_w)) = \ttt_{w\mid v} S( W(j)(F_w)).$$
These identifications are compatible with the Weil representations of
$\SL_2(\A_\Q)$ and $\SL_2(\A_F)$ for our fixed additive character $\psi$ of $\A_\Q$ and the
character $\psi_F = \psi\circ\tr_{F/\Q}$ of $\A_F$, i.e.,
$$\omega_{V,\psi}(g')\ph = \omega_{W(j), \psi_F}(g')\ph_{F,j},$$
where, on the right side, we view $g'\in \SL_2(\A_\Q)$ as an element of $\SL_2(\A_F)$. We write
$\ph_F$ for $\ph_{F,0}$.  Moreover, we will frequently
abuse notation and write $\ph$ for $\ph_F$ and identify $S(W(\A_F))$ with $S(V(\A))$.  Note that the Weil representations $\omega_{W(j),\psi_F}$ of $\SL_2(\A_F)$,
which are now all realized on $S(V(\A_\Q))$, via (\ref{identify}), do not coincide in general. The point is that the group
$\SL_2(F)$ in the dual pair
$(\SL_2(F),\RR_{F/\Q}\text{\rm O}(W(j)))$ arises as the commutant in the ambient symplectic group of the subgroup
$\RR_{F/\Q}\text{\rm O}(W(j)) \subset \text{\rm O}(V)$,
i.e., by a seesaw construction, and these subgroups do not coincide.

Recall that, for each $j$, we have fixed an isometry $\mu_j: W(j)(\hat F) \overset{\sim}{\longrightarrow} W(\hat F)$,
and an element $g_{j,0}\in \SO(V)(\hat\Q)$ so that the diagram (\ref{seesaw.relations}) commutes.
\begin{lemma}\label{twisting.Weil}
(i) For any $\ph\in S(V(\hat\Q))$, recall that we identify $\ph_{F, 0}=\ph_F$ with $\phi$ via $S(W(\hat F)) \cong S(V(\hat\Q))$.  Then
$$\mu_j^*(\ph) = (\omega(g_{j,0})\ph)_{F,j}.$$
(ii) The map  $\mu_j^*: S(W(\hat F)) \rightarrow S(W(j)(\hat F))$
intertwines the Weil representations  $\omega_{W,\psi_F}$ and $\omega_{W(j),\psi_F}$ of $\SL_2(\hat\Q)$
on these spaces.\hfill\break
(iii) For $g'\in \SL_2(\hat\Q)$, and $\ph\in S(V(\hat\Q))$,
$$\omega_{W(j),\psi_F}(g')\omega(g_{j,0})\ph = \omega(g_{j,0})\,\omega_{W,\psi_F}(g')\ph.$$
\end{lemma}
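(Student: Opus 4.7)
The plan is to take the three parts in sequence, with (i) reducing to a diagram chase through (\ref{seesaw.relations}), (ii) following from the fact that the Weil representation of $\SL_2$ on a quadratic Schwartz space depends only on the quadratic form, and (iii) obtained by combining (i) and (ii) with the compatibility of Weil representations under restriction of scalars recalled just before the lemma.

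For (i), write $\alpha_R\colon V(\hat\Q)\overset{\sim}{\to} W(\hat F)$ and $\alpha_L\colon V(\hat\Q)\overset{\sim}{\to} W(j)(\hat F)$ for the two vertical identifications in (\ref{seesaw.relations}). By the definitions preceding the lemma, for $\ph\in S(V(\hat\Q))$ we have $\ph_F=\ph\circ\alpha_R^{-1}$, and, evaluating at $w\in W(j)(\hat F)$,
$$
(\omega(g_{j,0})\ph)_{F,j}(w) \;=\; (\omega(g_{j,0})\ph)\bigl(\alpha_L^{-1}(w)\bigr) \;=\; \ph\bigl(g_{j,0}^{-1}\alpha_L^{-1}(w)\bigr).
$$
Commutativity of (\ref{seesaw.relations}) reads $\mu_j=\alpha_R\circ g_{j,0}^{-1}\circ\alpha_L^{-1}$, so
$$
(\mu_j^*\ph_F)(w) \;=\; \ph_F(\mu_j(w)) \;=\; \ph\bigl(\alpha_R^{-1}\mu_j(w)\bigr) \;=\; \ph\bigl(g_{j,0}^{-1}\alpha_L^{-1}(w)\bigr),
$$
which matches the previous expression and proves (i).

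For (ii), $\mu_j$ is an $\hat F$-linear isometry, and the Weil action of the standard generators $\kzxz{1}{b}{0}{1}$, $\kzxz{a}{0}{0}{a^{-1}}$, $\kzxz{0}{1}{-1}{0}$ of $\SL_2(\hat F)$ on a quadratic Schwartz space is given by formulas involving only the quadratic form and the Fourier transform taken with respect to it; both transfer through an isometry, so $\mu_j^*$ intertwines $\omega_{W,\psi_F}$ and $\omega_{W(j),\psi_F}$ as representations of $\SL_2(\hat F)$, and in particular as representations of the subgroup $\SL_2(\hat\Q)\subset\SL_2(\hat F)$. For (iii), apply (ii) to $\omega_{W,\psi_F}(g')\ph_F$ and then (i) to translate $\mu_j^*$ into the factor $\omega(g_{j,0})$; using the compatibility $\omega_{V,\psi}(g')=\omega_{W,\psi_F}(g')=\omega_{W(j),\psi_F}(g')$ on $S(V(\hat\Q))$ for $g'\in\SL_2(\hat\Q)$ recalled before the lemma, both sides of (iii) become $\omega(g')\omega(g_{j,0})\ph$ and $\omega(g_{j,0})\omega(g')\ph$ respectively, which agree because $\SL_2$ and $\Orth(V)$ commute in the Weil representation of the dual pair $(\SL_2,\Orth(V))$ over $\Q$.

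There is no serious technical obstacle. The one thing requiring care is bookkeeping: three distinct Weil representations $\omega_{V,\psi}$, $\omega_{W,\psi_F}$, $\omega_{W(j),\psi_F}$ are in play, each naturally defined on a different Schwartz space, all of which are being identified with $S(V(\hat\Q))$; one must keep track of which identification is in force at each step, so that the isometry property used in (ii) and the restriction-of-scalars compatibility used in (iii) are applied correctly.
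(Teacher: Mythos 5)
Your argument is correct, and it is also the natural one; the paper actually leaves Lemma \ref{twisting.Weil} without proof (the remark after the statement only clarifies notation), so there is no paper proof to compare against. Part (i) is exactly the diagram chase through (\ref{seesaw.relations}) you give, and part (ii) is the standard fact that a linear isometry of quadratic spaces over a local (or adelic) ring induces an intertwiner of the associated Weil representations for a fixed additive character, since the action of the generators depends only on the quadratic form and the attendant Fourier transform.

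One remark on your treatment of (iii): you actually give two independent proofs packed into one sentence, which makes the paragraph read awkwardly even though both are valid. The first half (apply (ii) and then substitute (i) on both sides of the intertwining identity) already yields
\[
\bigl(\omega_{W(j),\psi_F}(g')\,\omega(g_{j,0})\ph\bigr)_{F,j}
=\mu_j^*\bigl(\omega_{W,\psi_F}(g')\ph_F\bigr)
=\bigl(\omega(g_{j,0})\,\omega_{W,\psi_F}(g')\ph\bigr)_{F,j},
\]
from which (iii) follows since $\ph\mapsto\ph_{F,j}$ is a bijection; and this version holds for all $g'\in\SL_2(\hat F)$, not only for $g'\in\SL_2(\hat\Q)$. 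The second half (use the compatibility $\omega_{V,\psi}(g')\ph=\omega_{W(j),\psi_F}(g')\ph_{F,j}$ for $g'\in\SL_2(\hat\Q)$ to reduce (iii) to the commutation of the two factors of the dual pair $(\SL_2,\Orth(V))$ over $\Q$) is also correct, but it is a genuinely different proof and only works on the subgroup $\SL_2(\hat\Q)$; it is not needed once you have the first. It would be cleaner to pick one. No gap, the lemma is proved.
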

Here in part (iii), we are working in the fixed space $S(V(\hat\Q))$ with natural linear action of $g\in\SO(V)(\hat\Q)$, $\omega(g)\ph(x) = \ph(g^{-1}x)$,
and the various Weil representation actions of $\SL_2(\hat F)$, as described above.

For $z\in \mathbb D$, the Gaussian $\ph_\infty(\cdot ,z)\in S(V(\R))$ is defined by (\ref{gaussian}).
The points $z_0^\pm(j)\in \mathbb D$ are the fixed points of $T(j)(\R)$, and
$$\ph_\infty(\cdot ,z_0^\pm(j)) = \ttt_i \ph_{\infty,W(j)_{\sigma_i}},$$
in
$$S(V(\R)) = S(\RR_{F/\Q}(W(j))(\R)) = \ttt_i S( W(j)_{\sigma_i}),$$
where $W(j)_{\sigma_i}=W(j)\otimes_{F,\sigma_i}\mathbb R$, and
$$
\ph_{\infty, W(j)_{\sigma_i}} (x) = e^{-\pi |(x,x)_{W(j)_{\s_i}}|}
$$
is the Gaussian for the definite space $W(j)_{\sigma_i}$.
%
Note that $\ph_{\infty, W(j)_{\sigma_i}}$ is $\SO(W_{j,\sigma_i})$-invariant, and is an eigenfunction of $\SO_2(\mathbb
R) \subset \SL_2(\mathbb R)$ with respect to the Weil
representation $\omega_{W_{j, \sigma_i}}$ of `weight' $+ 1$ for $i\ne j$ and $-1$ for $i=j$.

For a $K$-invariant Schwartz function $\ph \in S(V(\hat{\mathbb Q}))^K$ and $\tau \in \mathbb H$,  the theta function
\begin{equation}\label{thetaoverQ}
\theta(\tau, z, g, \ph) =v^{\frac{1-d}2} \sum_{x \in V(\mathbb Q)}
\omega_V(g'_\tau)\ph_\infty(x, z) \ph(g^{-1}x)
\end{equation}
is an automorphic function of $[z,g]\in X_K$, where $z\in \mathbb D$ and $g\in G(\widehat \Q)$.
By the preceding discussion, the pullback of this function to $Z(T(j),z_0^\pm(j),g_j)$
coincides with the pullback of the Hilbert theta function associated to the quadratic space $W(j)$,
\begin{equation}\label{thetaoverF}
\theta(\vec\tau, t, (\omega(g_{j,0})\ph)_{F,j}) =
v_j\,N(\vec v)^{-\frac12}\sum_{x\in W(j)(F)} \omega_{W(j)}(g'_{\vec\tau})\ph_{\infty,W(j)}(x)\,(\omega(g_{j,0})\ph)_{F,j}(t^{-1}x),
\end{equation}
via the diagonal embedding of $\mathbb H$ into $\mathbb H^{d+1}$. Here $\vec\tau\in \mathbb H^{d+1}$, with components $\tau_r = u_r+iv_r$,
$N(\vec v) = \prod_r v_r$, and $g'_{\vec\tau}\in \SL_2(\R)^{d+1}$
with component $g'_{\tau_r}$ in the $r$th slot. This theta function has weight
$$\bold 1(j):=(1,\dots, -1,\dots, 1),$$
with $-1$ in the $j$th slot.

  Let $\chi=\chi_{E/F}$ be the quadratic Hecke
  character of $F$ associated to $E/F$, and let $I(s, \chi)=\otimes_v
  I(s, \chi_v)$ be the representation of $\SL_2(\A_F)$ induced from the character $\chi\ |\ |^s$
  of the standard Borel subgroup. We
  write $\Phi_{\sigma_i}^k$ for  the unique eigenfunction of
  $\SO_2(\mathbb R) \subset \SL_2(F\otimes_{F, \sigma_i} \mathbb
  R)$ in $I(s, \chi_{\sigma_i})$ of weight  $k$ with
  $\Phi_{\sigma_i}^k(1, s)=1$.  We define sections in $I_\infty(s,\chi_\infty) = \ttt_i I(s,\chi_{\sigma_i})$ by
  $$\Phi_\infty^{\bold 1}(s) = \ttt_i \Phi_{\sigma_i}^1(s),$$
  and
  $$\Phi_\infty^{\bold 1(j)}(s) = \Phi_{\sigma_j}^{-1}(s)\ttt(\ttt_{i\ne j}\Phi_{\sigma_i}^1(s)).$$
 For each $j$, there is an $\SL_2(\widehat{F})$-equivariant map
  $$
  \lambda_j: S(W(j)(\widehat{F})) \rightarrow I_f(0, \chi_f), \qquad\ph
  \mapsto \lambda_j(\ph) (g) =\omega_{W(j),\psi_F}(g)\ph(0).
  $$
  By (ii) of Lemma~\ref{twisting.Weil},  these maps for various $j$'s  are related as follows.
  \begin{lemma} For $\ph \in S(V(\hat{\Q}))$, one has
  $$\lambda_j(\mu_j^*(\ph_F)) = \lambda_0(\ph_F)$$
  \end{lemma}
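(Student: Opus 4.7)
The plan is to reduce the claim to the intertwining statement (ii) of Lemma~\ref{twisting.Weil} combined with the trivial observation that a linear map sends $0$ to $0$. First, I would unwind the definitions: by construction, for $g\in \SL_2(\widehat F)$,
\[
\lambda_j(\mu_j^*(\ph_F))(g) \;=\; \bigl[\omega_{W(j),\psi_F}(g)\,\mu_j^*(\ph_F)\bigr](0).
\]
Then I would invoke part (ii) of Lemma~\ref{twisting.Weil}, which asserts that $\mu_j^*$ intertwines the two Weil representations of $\SL_2(\widehat F)$, in order to pass the action of $g$ through $\mu_j^*$:
\[
\omega_{W(j),\psi_F}(g)\,\mu_j^*(\ph_F) \;=\; \mu_j^*\bigl(\omega_{W,\psi_F}(g)\ph_F\bigr).
\]

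Next, since $\mu_j\colon W(j)(\widehat F)\to W(\widehat F)$ is $\widehat F$-linear, it sends $0$ to $0$, so for any $\psi\in S(W(\widehat F))$,
\[
(\mu_j^*\psi)(0) \;=\; \psi(\mu_j(0)) \;=\; \psi(0).
\]
Applying this with $\psi = \omega_{W,\psi_F}(g)\ph_F$ gives
\[
\bigl[\mu_j^*\bigl(\omega_{W,\psi_F}(g)\ph_F\bigr)\bigr](0) \;=\; \bigl[\omega_{W,\psi_F}(g)\ph_F\bigr](0) \;=\; \lambda_0(\ph_F)(g),
\]
and combining this with the previous display yields the desired identity of sections in $I_f(0,\chi_f)$.

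There is no real obstacle here: the entire content of the claim is the $\SL_2(\widehat F)$-equivariance of $\mu_j^*$, which has already been recorded in Lemma~\ref{twisting.Weil}(ii), together with $\mu_j(0)=0$. The only conceptual point worth flagging is that, although $\ph_F$ is the element of $S(W(\widehat F))$ obtained from $\ph\in S(V(\widehat\Q))$ via the identification $V\simeq \RR_{F/\Q}W$ and $S(V(\widehat\Q)) = S(W(\widehat F))$, the two sides of the identity are both well-defined sections of the induced representation $I_f(0,\chi_f)$ \emph{at the common level of $F$-theta data}, so no additional bookkeeping between $W$ and $W(j)$ is needed beyond the intertwining already proved.
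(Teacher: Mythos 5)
Your proposal is correct and follows the same route the paper takes: invoke the intertwining property of $\mu_j^*$ and observe $\mu_j(0)=0$. One small slip worth noting: you write that Lemma~\ref{twisting.Weil}(ii) gives the intertwining for the Weil representations of $\SL_2(\widehat F)$, whereas the lemma as stated in the paper only records the intertwining for $\SL_2(\widehat\Q)$. Since $\lambda_j(\ph)(g) = \omega_{W(j),\psi_F}(g)\ph(0)$ is a function on $\SL_2(\widehat F)$, you do need the $\SL_2(\widehat F)$-version. That stronger statement is in fact the more basic one --- it follows immediately from $\mu_j$ being an $\widehat F$-linear isometry, by functoriality of the Weil representation under isometries (and is what underlies the proof of (ii)) --- so the argument goes through; it would just be cleaner to appeal to the isometry property directly rather than to the $\SL_2(\widehat\Q)$ statement in Lemma~\ref{twisting.Weil}(ii).
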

  Let $\Phi_{\ph}(s) \in I_f(s, \chi_f)$ be the unique
  standard section with $\Phi_{\ph}(g, 0) =\lambda_0(\ph) = \lambda_j(\mu_j^*(\ph))$.  For
  $\ph \in S(W(\widehat F))=S(V(\widehat\Q))$ and $\vec\tau=(\tau_0,\dots, \tau_d)\in \mathbb H^{d+1}$
  with $\tau_r = u_r+iv_r$,, we define the
  Hilbert-Eisenstein series
  \begin{equation}
  E(\vec\tau, s, \ph,\bold 1) =N(\vec v)^{-\frac12}
  E ( g'_{\vec\tau}, s,  \Phi_\infty^{\bold 1}\ttt\Phi_{\ph})
\end{equation}
and
\begin{equation}
E(\vec\tau, s, \ph,\bold 1(j)) = v_j\,N(\vec v)^{-\frac{1}2}
  E ( g'_{\vec\tau}, s, \Phi_\infty^{\bold 1(j)}\ttt \Phi_{\ph}).
\end{equation}
Here $N(\vec v) = \prod_r v_r$.
Note that,
$\Phi_\infty^{\bold 1(j)}(s)$ is associated to the Gaussian $\ph_{\infty,W(j)}$, so that
$E(\vec\tau, s, \ph,\bold 1(j))$ is a coherent Eisenstein series of weight $\bold 1(j)$ attached to
the function $\ph_{\infty,W(j)}\otimes \mu_j^*(\ph)\in S(W(j)(\A_F))$
and
$E(\vec\tau, s, \ph,\bold 1)$ is an incoherent Eisenstein series of parallel weight $\bold 1$ (independent of $j$).
The two Eisenstein series are related as follows by an observation
of \cite[(2.17)]{KuIntegral}, \cite[Lemma 2.3]{BY2},

\begin{lemma}  \label{lem4.1} Write $\bar{\partial}_{j} =
\frac{\partial\phantom{\bar\tau} }{\partial\bar\tau_{\sigma_j}} d
\bar{\tau}_{\sigma_j}$. Then
$$
-2 \bar{\partial}_{j} \left( E'(\vec\tau,0,\ph,\bold 1)\,d\tau_{\sigma_j}\right) = E(\vec\tau, 0, \ph,\bold 1(j))
\,d\mu(\tau_{\sigma_j}).
$$
\end{lemma}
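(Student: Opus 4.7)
The plan is to reduce this identity, exactly as was done for low-dimensional cases in \cite[(2.17)]{KuIntegral} and \cite[Lemma 2.3]{BY2}, to a local archimedean computation at the place $\sigma_j$. Both Eisenstein series are formed from the same finite-place section $\Phi_\ph$ and the same archimedean sections at the places $\sigma_i$ for $i\ne j$; they differ only in the slot $j$, where $E(\vec\tau, s, \ph, \bold 1)$ carries $\Phi_{\sigma_j}^1(s)$ (weight $+1$) while $E(\vec\tau, s, \ph, \bold 1(j))$ carries $\Phi_{\sigma_j}^{-1}(s)$ (weight $-1$), together with an extra $v_j$ weight-normalization factor. After commuting $\bar\partial_{\sigma_j}$ with the Eisenstein summation over $B(F)\backslash \SL_2(F)$, which is legitimate in the region of absolute convergence and then by analytic continuation, the identity reduces to a statement between the local archimedean sections as functions of $\tau_j$, summed over $\SL_2(F)$-cosets.

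For the local calculation, writing the Iwasawa decomposition $\gamma g'_{\tau_j} = g'_{\gamma\tau_j}\,k_\theta$ with $e^{i\theta} = \overline{c\tau_j+d}/|c\tau_j+d|$, one has
\[
\Phi_{\sigma_j}^k(\gamma g'_{\tau_j}, s) \;=\; v_j^{(s+1)/2}\,(c\tau_j+d)^{-(s+1+k)/2}\,\overline{(c\tau_j+d)}^{-(s+1-k)/2}.
\]
Applying $\partial/\partial\bar\tau_j$ to the weight-$1$ section and using $\partial v_j/\partial\bar\tau_j = i/2$ together with the holomorphy in $\tau_j$ of $(c\tau_j+d)^{-(s+2)/2}$, the $\bar\partial$ of the $s$-derivative at $s=0$, when packaged via $d\bar\tau_j\wedge d\tau_j = 2iv_j^2\,d\mu(\tau_j)$, matches $\Phi_{\sigma_j}^{-1}(\gamma g'_{\tau_j},0)\,d\mu(\tau_j)$ up to the constant $-2$ and the weight-normalizing factor $v_j$. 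Crucially, the incoherence of the datum $\Phi_\infty^{\bold 1}\otimes \Phi_\ph$ (the archimedean weight is ``wrong'' at $\sigma_0$ relative to the signature of $W$) forces $E(\vec\tau, 0, \ph, \bold 1) \equiv 0$, so that $E'(\vec\tau, 0, \ph, \bold 1)$ is the well-defined object to which $\bar\partial_{\sigma_j}$ is applied.

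The main obstacle is the careful bookkeeping of the various normalization constants: the factor $N(\vec v)^{-1/2}$ common to both sides, the additional $v_j$ in the definition of $E(\vec\tau, s, \ph, \bold 1(j))$ that reflects the weight $-1$ at slot $j$, and the $2iv_j^2$ arising in $d\bar\tau_j\wedge d\tau_j = 2iv_j^2\,d\mu(\tau_j)$, together with the overall sign and the factor $-2$ in the statement. Once the local identity at $\sigma_j$ is in place, the global identity follows by resumming over $B(F)\backslash\SL_2(F)$, since the remaining archimedean sections at $\sigma_i$ for $i\ne j$ and the finite section $\Phi_\ph$ contribute identically on both sides.
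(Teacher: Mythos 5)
Your approach is the right one and reproduces the computation that the cited sources carry out; the paper itself gives no proof of Lemma~\ref{lem4.1}, simply pointing to \cite[(2.17)]{KuIntegral} and \cite[Lemma 2.3]{BY2}. Your Iwasawa bookkeeping is correct: with $e^{i\theta}=\overline{c\tau_j+d}/|c\tau_j+d|$ one indeed gets
$\Phi^k_{\sigma_j}(\gamma g'_{\tau_j},s)=v_j^{(s+1)/2}(c\tau_j+d)^{-(s+1+k)/2}\overline{(c\tau_j+d)}^{-(s+1-k)/2}$,
so $v_j^{-1/2}\Phi^1_j(0)=(c\tau_j+d)^{-1}$ is holomorphic in $\tau_j$. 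This is the crucial observation you left implicit: in the product rule for $\partial_s|_{s=0}$ applied to $N(\vec v)^{-1/2}\prod_i\Phi^1_{\sigma_i}\cdot\Phi_\ph$, every term in which $\partial_s$ hits a factor other than the $\sigma_j$-one has $(c\tau_j+d)^{-1}$ as its $\sigma_j$-factor and is therefore annihilated by $\bar\partial_j$; only the $\sigma_j$-term survives, and then the local identity you sketch closes the argument. Carrying that local calculation to the end: $\partial_s|_{s=0}[v_j^{-1/2}\Phi^1_j]=\tfrac12(\log v_j-\log|c\tau_j+d|^2)(c\tau_j+d)^{-1}$, and multiplying by $-4iv_j^2\,\partial/\partial\bar\tau_j$ yields $v_j/\overline{c\tau_j+d}=v_j^{1/2}\Phi^{-1}_j(0)$, which is precisely the $\sigma_j$-factor of $E(\vec\tau,0,\ph,\mathbf 1(j))$, including the extra weight-normalizing $v_j$.

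One inaccuracy to flag: you call the incoherence of $\Phi_\infty^{\mathbf 1}\otimes\Phi_\ph$ ``crucial'' for $E'(\vec\tau,0,\ph,\mathbf 1)$ to be well-defined. This is not right. The value $E'(\vec\tau,0,\ph,\mathbf 1)$ is the $s$-derivative at $s=0$ of an analytically continued Eisenstein series, and is well-defined whether or not $E(\vec\tau,0,\ph,\mathbf 1)$ vanishes. The identity of Lemma~\ref{lem4.1} is purely a differential relation between archimedean sections combined with the holomorphy observation above, and holds verbatim for coherent data as well. Incoherence, and the resulting vanishing $E(\vec\tau,0,\ph,\mathbf 1)\equiv 0$, only becomes important later, in Proposition~\ref{prop4.3} and in the main formula, where $E'$ rather than $E$ is the leading term. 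So delete or rephrase that sentence; everything else is sound.
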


In this paper, we normalize the Haar measure $dh$ on
$\SO(W(j))(\A_F)$ so that
$$\vol(\SO(W(j))(F)\backslash \SO(W(j))(\A_F))=2,$$
and write $dh = dh_\infty\,dh_f$ where
$dh_\infty = \prod_i dh_{\infty_i}$ with
$\vol(\SO(W(j)_{\sigma_i}),dh_{\infty_i}) =1$.
For the
convenience of the reader,
we first recall \cite[Lemma 2.13]{Scho}.
\begin{lemma} \label{lem4.2}  For any
function $f$ on
$$Z(T(j), z_0(j),g_j)= T(j)(\Q) \backslash \big(\,\{z_0(j) \}
\times T(j)(\hat\Q)/K^{g_j}_{T(j)}\,\big),$$
the weighted sum (\ref{weighted.sum}) of the values of $f$ over this discrete finite set
is given by
$$
f(Z(T(j), z_0(j),g_j)) =\frac12\,\deg Z(T, z_0) \int_{\SO(W(j))(F)
\backslash \SO(W(j))(\hat{F})} f(z_0(j), t) \,  dt.
$$
Here
$$
\deg Z(T, z_0) =\frac{4}{\vol(K_T)}
$$
is independent of $j$.
\end{lemma}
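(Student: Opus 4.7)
The plan is to convert the discrete weighted sum (\ref{weighted.sum}) defining $f(Z(T(j),z_0(j),g_j))$ into an integral over the compact adelic double quotient $\SO(W(j))(F)\backslash\SO(W(j))(\hat F)$ by exploiting the exact sequence (\ref{exactT}), $1\to\G_m\to T\to\RR_{F/\Q}\SO(W)\to 1$.

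First, I would start from the defining formula
$$
f(Z(T(j),z_0(j),g_j))=\frac{2}{w_{K,T(j),g_j}}\sum_{t\in T(j)(\Q)\backslash T(j)(\hat\Q)/K_{T(j)}^{g_j}} f(z_0(j),tg_j)
$$
and rewrite the sum as an integral over $T(j)(\Q)\backslash T(j)(\hat\Q)$, using the fact that the integrand is right-invariant under the compact open subgroup $K_{T(j)}^{g_j}$. The standard unfolding identity turns the weighted sum into $2\,\vol(K_{T(j)}^{g_j})^{-1}\int_{T(j)(\Q)\backslash T(j)(\hat\Q)}f(z_0(j),tg_j)\,dt$, with the stabilizer weights absorbed into the measure-theoretic identity so that the factor $w_{K,T(j),g_j}$ cancels cleanly.

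Next, I would push the integral down to $\SO(W(j))(F)\backslash\SO(W(j))(\hat F)$ via (\ref{exactT}). This is legitimate because the function $f(z_0(j),tg_j)$ depends only on the image of $t$ in $\SO(V)(\hat\Q)$: the central $\G_m$ is the kernel of $\GSpin(V)\to\SO(V)$, acts trivially on $z_0(j)$, and affects the Shimura variety point only by a right $K$-translation that has already been quotiented out. Combined with (\ref{conjKjK}), which gives $\vol(K_{T(j)}^{g_j})=\vol(K_T)$, this simultaneously justifies that $\deg Z(T,z_0)=4/\vol(K_T)$ is independent of $j$.

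Finally, I would match the constants using the paper's normalization $\vol(\SO(W(j))(F)\backslash\SO(W(j))(\A_F))=2$ together with $\vol(\SO(W(j)_{\sigma_i}),dh_{\infty_i})=1$, which (after a Fubini argument using that $\SO(W(j))(F)\cap\SO(W(j))(F_\infty)$ is finite and carries the compatible counting measure) yields $\vol(\SO(W(j))(F)\backslash\SO(W(j))(\hat F))=2$. Combining with a compatibly normalized $\G_m$-measure, the overall prefactor becomes $\tfrac{1}{2}\cdot\tfrac{4}{\vol(K_T)}=\tfrac{1}{2}\deg Z(T,z_0)$, as claimed. The main obstacle is this last bookkeeping step: one must track how the archimedean volumes, the stabilizer weights, and the Tamagawa normalization of the anisotropic torus $\SO(W(j))$ interact, so that the factor of $\tfrac{1}{2}$ in the final formula is produced from the Tamagawa number $2$ of the norm-one torus rather than from some accidental cancellation of normalization constants.
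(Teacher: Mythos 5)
Your route is genuinely different from the paper's: the paper proves this lemma in one line by citing Schofer's Lemma 2.13 (which establishes exactly the identity in question, with $\deg Z(T,z_0)$ replaced by $2/\vol(K^{g_j}_{T(j)})$), and then observing that $\vol(K^{g_j}_{T(j)})=\vol(K_T)$ is independent of $j$ by the conjugation relations (\ref{conjTjT})--(\ref{conjKjK}). You instead propose to re-derive Schofer's identity from scratch, which is a legitimate and more self-contained approach, and your outline (unfold the weighted sum to a $T(j)(\Q)\backslash T(j)(\hat\Q)$-integral, push down via the exact sequence $1\to\G_m\to T\to\RR_{F/\Q}\SO(W)\to 1$, match constants using the Tamagawa normalization) is the correct skeleton.

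However, there is a real gap in the push-down step, and your proposal does not close it. You assert that the integrand is $\G_m(\hat\Q)$-invariant because the center ``affects the Shimura variety point only by a right $K$-translation that has already been quotiented out.'' That is only true for $z\in K$, not for arbitrary $z\in\hat\Q^\times$. What actually makes the integral descend to $\SO(W(j))(F)\backslash\SO(W(j))(\hat F)$ is the combination of (i) the condition $K_T\cap\hat\Q^\times=\hat\Z^\times$ (which the paper records as (\ref{fullness})), and (ii) the class-number-one fact $\hat\Q^\times=\Q^\times_{>0}\cdot\hat\Z^\times$, so that $\G_m(\hat\Q)\subset\G_m(\Q)\,K_T$; only then is $T(\Q)\backslash T(\hat\Q)/K_T\to\SO(W)(F)\backslash\SO(W)(\hat F)/\pi(K_T)$ a bijection and the $\G_m$-fiber integral trivializes. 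Without invoking this, the reduction simply does not go through. In addition, the constant-matching — which you yourself flag as ``the main obstacle'' — is never carried out: you end by restating the target identity $\tfrac12\cdot\tfrac{4}{\vol(K_T)}=\tfrac12\deg Z(T,z_0)$, which is circular, and the claim that the factor $\tfrac12$ ``is produced from the Tamagawa number $2$'' is not verified (in Schofer's setup it arises from the index-two passage between $\text{\rm O}(W)$ and $\SO(W)$ together with the Tamagawa normalization, and one has to keep these two contributions from being double-counted). So the proposal identifies the right ingredients but leaves precisely the two nontrivial steps — $\G_m$-descent via (\ref{fullness}) and the volume bookkeeping — unproved.
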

\begin{proof}
By \cite[Lemma 2.13]{Scho}, the formula holds with
$\deg Z(T, z_0)$ replaced by the quantity $2/\vol (K^{g_j}_{T(j)}) $. So it
suffices to check $ \vol(K^{g_j}_{T(j)})=\vol(K_T)$  is independent of
$j$. But this is immediate by (\ref{conjTjT}) and (\ref{conjKjK}). \end{proof}

\begin{proposition} \label{prop4.2} With the notation as above,
$$
 \theta(\tau, Z(T(j),z_0(j),g_j),
\ph) =
C\cdot E(\tau\DD, 0, \ph,\bold 1(j))
$$
where
$$C= \frac12\, \deg(Z(T,z_0)).$$
\end{proposition}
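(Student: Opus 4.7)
The proof should follow the same strategy as the `small CM' case in \cite{BY2}: combine Schofer's integration formula (Lemma \ref{lem4.2}) with the Siegel--Weil formula applied to the dual pair $(\SL_2(F), \SO(W(j)))$.

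First, by Lemma \ref{lem4.2},
\begin{equation*}
\theta(\tau, Z(T(j), z_0(j),g_j), \ph) \;=\; \tfrac12\deg Z(T, z_0) \int_{\SO(W(j))(F)\backslash \SO(W(j))(\hat F)} \theta(\tau, z_0(j), t g_j, \ph)\, dt.
\end{equation*}
Using the seesaw identification described around (\ref{thetaoverQ})--(\ref{thetaoverF}) together with Lemma \ref{twisting.Weil}(i), I rewrite the integrand as the diagonal restriction of the Hilbert theta function attached to $W(j)$: the finite-adelic twist by $g_j$ on $V(\hat\Q)$ corresponds to pullback by $\mu_j^\ast$ on $W(j)(\hat F)$, and the Gaussian $\ph_\infty(\cdot,z_0(j))$ factors as $\bigotimes_i \ph_{\infty, W(j)_{\sigma_i}}$, producing the archimedean section of weight $\bold 1(j)$. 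Thus
\begin{equation*}
\theta(\tau, z_0(j), tg_j, \ph) \;=\; \theta(\tau^\Delta,\, t,\, \ph_{\infty,W(j)}\otimes \mu_j^\ast(\ph)).
\end{equation*}

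Next I apply the Siegel--Weil formula. Since the discriminant $-\det W$ generates the CM field $E/F$, the space $W(j)$ is anisotropic over $F$ (it is even totally definite at the archimedean places), and the theta integral converges absolutely. The compact real group $\SO(W(j))(F\otimes\R) = \prod_i \SO(W(j)_{\sigma_i})$ has total volume $1$ in the chosen normalization, and the integrand is invariant under its left action, so extending to $\SO(W(j))(F)\backslash \SO(W(j))(\A_F)$ does not change the value. The Siegel--Weil identity then gives
\begin{equation*}
\int_{[\SO(W(j))]} \theta(g'_{\vec\tau},\, t,\, \ph_{\infty,W(j)}\otimes \mu_j^\ast(\ph))\, dt \;=\; E(g'_{\vec\tau},\, 0,\, \Phi_\infty^{\bold 1(j)}\otimes \Phi_{\ph}),
\end{equation*}
where I have used the lemma $\lambda_j(\mu_j^\ast(\ph)) = \lambda_0(\ph)$ to identify $\Phi_{\mu_j^\ast(\ph)} = \Phi_{\ph}$. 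Specializing $\vec\tau = \tau^\Delta$ and recalling the definition of $E(\vec\tau,s,\ph,\bold 1(j))$ yields the claimed identity.

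The main obstacle is bookkeeping of constants: one must consistently track the factor $\tfrac12$ from Lemma \ref{lem4.2}, the global volume $\vol([\SO(W(j))]) = 2$ against the local volume $\vol(\SO(W(j))(F\otimes\R)) = 1$, and the $v_j$-factor built into the definition of $E(\vec\tau,s,\ph,\bold 1(j))$. With the conventions fixed in the paper these precisely reproduce the parallel computation in \cite{Scho} and \cite{BY2}, collapsing to the stated constant $C = \tfrac12\deg Z(T,z_0)$; that the Siegel--Weil constant enters trivially is crucial and is a consequence of working in the absolutely convergent range where $W(j)$ is anisotropic.
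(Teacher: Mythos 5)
Your proposal is correct and follows essentially the same route as the paper: apply Lemma \ref{lem4.2} to convert the weighted sum over the CM cycle into an adelic integral over $\SO(W(j))(F)\backslash\SO(W(j))(\A_F)$, identify the integrand with the diagonal restriction of the Hilbert theta function attached to $W(j)$ via the seesaw and Lemma \ref{twisting.Weil}, and then invoke the Siegel--Weil formula in the convergent (anisotropic) range. The paper's own proof is simply a terse version of exactly this; your additional remarks on the volume normalizations and on why the Siegel--Weil constant is absorbed by the choice $\vol([\SO(W(j))])=2$ are accurate and fill in what the paper leaves implicit.
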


\begin{proof} Since $\vol(\SO(W(j))_{\sigma_i})=1$, one has by Lemma \ref{lem4.2} that
$$
\theta(\tau, Z(T(j),z_0(j),g_j) = \frac12\,\deg Z(T, z_0)
\int_{\SO(W(j))(F) \backslash \SO(W(j))(\A_F)} \theta(\tau\DD, t, (\omega(g_{j,0})\ph)_{F,j})\, dt,
$$
where the theta function in the integral is given by (\ref{thetaoverF}).
Now the proposition follows from the Siegel-Weil formula,
\end{proof}

For $\chi=\chi_{E/F}$ as above,  let
\begin{equation} \label{eq:L-series}
\Lambda(s, \chi) =A^{\frac{s}2} (\pi^{-\frac{s+1}2}
\Gamma(\frac{s+1}2))^{d+1} L(s, \chi),  \quad A=N_{F/\mathbb Q}
(\partial_F d_{E/F})
\end{equation}
 be the complete $L$-function of $\chi$. It is a holomorphic function of $s$ with
functional equation
$$
\Lambda(s, \chi) =\Lambda(1-s, \chi),
$$
and 
$$
\Lambda(1, \chi) =\Lambda(0, \chi) =L(0, \chi)=2^{d-\delta}\frac{h(E)}{w(E)\,h(F)} \in \mathbb Q^\times,
$$
where $2^\delta = |\calO_E^\times:\mu(E) \calO_F^\times|$ is $1$ or $2$.
Let
$$
E^*(\vec\tau, s, \ph,\bold 1) = \Lambda(s+1, \chi)\, E(\vec\tau, s, \ph,\bold 1)
$$
be the normalized incoherent Eisenstein series.

\begin{proposition}\label{prop4.3} Let $\ph=\ph_F \in S(V(\hat\Q)) =S(W(\hat F))$.
For  a totally positive element $t\in F^\times_+$,
let $a(t,\ph)$ be the $t$-th Fourier coefficient of $E^{*,
\prime}(\vec\tau, 0, \ph,\bold 1)$ and write
the constant term  of  $E^{*, \prime}(\vec\tau, 0, \ph,\bold 1)$ as
$$\ph(0)\, \big(\,\Lambda(0, \chi)  \log \norm(\vec v) +a_0(\ph)\,\big).$$
Let
$$
\mathcal E(\tau, \ph) = \ph(0)\,  a_0(\ph) + \sum_{n \in \mathbb
Q_{>0}} a_n(\ph)\, q^n
$$
where
$$
a_n(\ph) =\sum_{t \in F^\times_+, \, \tr_{F/\mathbb Q} t =n} a(t,\ph).
$$
Then, writing $\tau\DD$ for the diagonal image of $\tau\in \mathbb H$ in $\mathbb H^{d+1}$,
$$
E^{*, \prime}(\tau\DD, 0, \ph,\bold 1) -\mathcal E(\tau, \ph)-\ph(0)\, \Lambda(0, \chi)\,
(d+1)\,\log v
$$
is of exponentially decay as  $v$ goes to infinity.
Moreover, for $n >0$
$$
a_n(\ph)  =\sum_p a_{n, p}(\ph) \log p
$$
with $a_{n, p}(\ph) \in \mathbb Q(\ph)$, the subfield of $\mathbb C$
generated by the values $\ph(x)$, $x\in V(\widehat\Q)$.
\end{proposition}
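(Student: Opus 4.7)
The Eisenstein series $E(\vec\tau, s, \ph, \bold 1)$ is incoherent: no two-dimensional $F$-quadratic space has parallel archimedean signature $((2,0),\dots,(2,0))$ together with the finite-adelic data of $\ph$ (which is that of $W$, having signature $(0,2)$ at $\sigma_0$), since the product of local Hasse invariants is forced to be $-1$. Thus $E^*(\vec\tau, 0, \ph, \bold 1)\equiv 0$, and I study its derivative at $s=0$ via the Fourier expansion
\begin{equation*}
E^*(\vec\tau, s, \ph, \bold 1) \;=\; \sum_{t\in F} E^*_t(\vec\tau, s, \ph, \bold 1),\qquad
E^*_t \;=\; \Lambda(s+1, \chi)\prod_v W_{t,v}(\vec\tau, s, \ph),
\end{equation*}
as a product of local Whittaker integrals.

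The key structural input is the following dichotomy. At an archimedean place $\sigma_i$, the weight-$1$ Whittaker function at $s=0$ equals a nonzero constant times $v_i^{1/2}\,e(\sigma_i(t)\tau_i)$ when $\sigma_i(t)>0$, and vanishes to first order when $\sigma_i(t)<0$, the derivative then producing a term proportional to $\Gamma(0, 4\pi|\sigma_i(t)|v_i)\,e(\sigma_i(t)u_i)$ which decays exponentially in $v_i$. At a finite place $v\mid p$, $W_{t,v}(s, \ph_v)$ is a polynomial in $p^{-s}$ with coefficients in $\mathbb Q(\ph)$, vanishing at $s=0$ exactly when $t$ is not locally represented by the quadratic form attached to $\ph_v$. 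Incoherence forces the number of obstructed places to be odd for every $t\in F^\times$, so $E^*_t$ vanishes at $s=0$ to odd order, and only those $t$ vanishing to order exactly one contribute to $E^{*,\prime}(\vec\tau,0,\ph,\bold 1)$. This splits the nontrivial contributions into two families: (a) $t\gg 0$ with exactly one obstructed finite place $v_0\mid p$, which upon diagonal restriction $\vec\tau = \tau\DD$ combines all Gaussians into $a(t,\ph)\,e(\tr_{F/\Q}(t)\tau)$; and (b) $t$ with exactly one negative archimedean component and no finite obstruction, contributing exponentially decaying terms in $v=\Im\tau$.

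The $t=0$ term is handled separately by Langlands' constant-term formula $E_0^*(\vec\tau,s,\ph,\bold 1)=\ph(0)\Lambda(s+1,\chi)N(\vec v)^{s/2} + \ph(0)\,M^*(s)\,N(\vec v)^{-s/2-1}$; the incoherence condition $M^*(0)=-1$ ensures vanishing at $s=0$ and yields $\partial_s E_0^*|_{s=0}=\ph(0)\bigl(\Lambda(0,\chi)\log N(\vec v) + a_0(\ph)\bigr)$ for a constant $a_0(\ph)$. On the diagonal $\log N(\vec v)=(d+1)\log v$, so subtracting this and $\mathcal E(\tau,\ph)$ from $E^{*,\prime}(\tau\DD,0,\ph,\bold 1)$ leaves exactly the family-(b) terms, proving the exponential-decay assertion. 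For rationality, in family (a) the single derivative $\partial_s W_{t,v_0}|_{s=0}=(\log p)\cdot q_{v_0}$ with $q_{v_0}\in\mathbb Q(\ph)$; all other local Whittaker values at $s=0$ lie in $\mathbb Q(\ph)$, and the archimedean $\Gamma$-factors packaged into $\Lambda(s+1,\chi)$ cancel against those emerging from the Gaussian archimedean Whittaker functions. Summing over $t\gg 0$ with $\tr_{F/\Q}(t)=n$ then gives $a_n(\ph)=\sum_p a_{n,p}(\ph)\log p$ with $a_{n,p}(\ph)\in \mathbb Q(\ph)$. The main technical obstacle will be tracking normalizations carefully enough --- the volume factors in the $p$-adic Whittaker integrals, the Weil indices, and the archimedean $\Gamma$-values --- so that the transcendental pieces cancel and only the rational $\log p$ survives.
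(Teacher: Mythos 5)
Your proposal is correct and follows essentially the same route as the paper's own proof: you establish incoherence and vanishing of $E^*$ at $s=0$, decompose via local Whittaker functions, use the archimedean dichotomy (nonzero at $s=0$ for $\sigma_i(t)>0$, first-order vanishing with incomplete-$\Gamma$ derivative for $\sigma_i(t)<0$), invoke the odd-cardinality property of the Diff set to reduce to singleton $D(t)$, and split the contributions into a totally-positive-with-finite-obstruction family giving $a(t,\ph)\in\mathbb Q(\ph)\log p$ and an archimedean-obstruction family of exponentially decaying terms, with the constant term handled via the Langlands formula. One small slip: the constant term should read $\ph(0)\bigl(\Lambda(s+1,\chi)N(\vec v)^{s/2}+\Lambda(1-s,\chi)M_\ph(s)N(\vec v)^{-s/2}\bigr)$ with $M_\ph(0)=-1$; your exponent $N(\vec v)^{-s/2-1}$ would spoil the vanishing at $s=0$, and the incoherence condition is $M_\ph(0)=-1$ (so $M^*(0)=-\Lambda(0,\chi)$, not $-1$), though your stated derivative formula is nonetheless correct.
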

\begin{proof}
Let $\mathcal C =\otimes_v \mathcal C_v$ be the incoherent
collection of local quadratic $F_v$-spaces with $\hat{\mathcal C}=\hat{W}$
for finite adeles and  $\mathcal C_{\infty} $ is totally positive definite.
Then
$$\Phi_{\ph}(0)\ttt \Phi_\infty^{\bold 1}(0)=\lambda(\ph\otimes
\ph_{\infty,\mathcal C})$$
for $\ph \otimes \ph_{\infty ,\mathcal C}\in S(\mathcal C) = S(\hat{W})\ttt S(\mathcal C_\infty)$,
where $\ph_{\infty,\mathcal C} = \ttt_i \ph_{\infty, \mathcal C_{\sigma_i}}$ is the product of the Gaussians
for the positive definite binary quadratic spaces $\mathcal C_{\sigma_i}$.
Thus $E(\vec\tau, s, \ph,\bold 1)$ is an incoherent Eisenstein series according
to \cite{KuAnnals} and $E^*(\vec\tau, 0, \ph,\bold 1)=0$. By linearity, we may assume that the function
$\ph= \ttt_v \ph_v\in S(W(\widehat F))$ is factorizable,
the Fourier expansion can be written as
$$
E^*(\vec\tau, s, \ph,\bold 1) =E_0^*(\vec\tau, s, \ph,\bold 1) +\sum_{t \in F^\times}
E_t^*(\vec\tau, s, \ph,\bold 1)
$$
with
$$
E_t^*(\vec\tau, s, \ph,\bold 1) =A^{\frac{s}2} \prod_{\mathfrak p < \infty} W_{t,
\mathfrak p}^*(1, s, \ph_\mathfrak p) \prod_{i=0}^d W_{\sigma_i(t),
\sigma_i}^*(\tau_i, s, \Phi_{\sigma_i}^1)
$$
and
$$
E_0^*(\vec\tau, s, \ph,\bold 1) = \ph(0) \Lambda(s+1, \chi) \norm(\vec v)^{\frac{s}2}
+A^{\frac{s}2} \prod_{\mathfrak p < \infty} W_{0, \mathfrak p}^*(1, s, \ph_\mathfrak p)
\prod_{i=0}^d W_{0, \sigma_i}^*(\tau_i, s,
\Phi_{\sigma_i}^1).
$$
Here, for $g'\in \SL_2(F_\mathfrak p)$,
$$
W_{t, \mathfrak p}^*(g', s, \ph_\mathfrak p) =L_\mathfrak p(s+1,\chi_v)\, W_{t, \mathfrak p}(g', s, \ph_\mathfrak p)
$$
and
$$
W_{\sigma_i(t), \sigma_i}^*(\tau_i, s, \Phi_{\sigma_i}^1)
 =\pi^{-\frac{s+2}2} \Gamma(\frac{s+2}2)\,v_i^{-\frac{1}2}\, W_{\sigma_i(t),
 \sigma_i}(g'_{\tau_i}, s, \Phi_{\sigma_i}^1)
 $$
 are the normalized local Whittaker functions, which are computed in
 \cite{KRY1} and \cite{YaValue} in special cases. In particular,
 \cite[Proposition 2.6]{KRY1} (see also \cite[Proposition 1.4]{YaValue}) asserts that\footnote{
The extra `$-$' in the formula is due to the fact that we use
$w=\kzxz {0} {-1} {1} {0}$ here for the  local Whittaker function
instead of $w^{-1}$ in \cite{KRY1}.
}
\begin{align*}
W_{\sigma_i(t), \sigma_i}^*(\tau_i, 0,
\Phi_{\sigma_i}^1)&=2 \,\gamma(\mathcal C_{\sigma_i})\, e(\sigma_i(t)\tau_i),&&\ff
\sigma_i(t)
>0,
\\
 W_{0, \sigma_i}^*(\tau_i, s,
\Phi_{\sigma_i}^1)&=\gamma(\mathcal C_{\sigma_i})\, v_i^{-s/2}
\pi^{-\frac{s+1}2} \Gamma(\frac{s+1}2),
\\
W_{\sigma_i(t), \sigma_i}^*(\tau_i, 0,
\Phi_{\sigma_i}^1)&=0, &&\ff \sigma_i(t) <0.
\end{align*}
Here
$\gamma(\mathcal C_{\sigma_i})$ is the local Weil index, an $8$-root
of unity.  Moreover, in the last case,
$$
W_{\sigma_i(t), \sigma_i}^{*, \prime}(\tau_i, 0,
\Phi_{\sigma_i}^1)=\gamma(\mathcal C_{\sigma_i})\,
e(\sigma_i(t)\tau_i)\, \beta_1(4 \pi |\sigma_i(t)| v_i)
$$
is of exponentially decay when $v_i$ goes to infinity. Here
$$
\beta_1(x) =\int_1^\infty e^{-xt} t^{-1} \, dt,  \quad x >0.
$$
On the other hand, when everything is unramified at a finite prime
$\mathfrak p$,   i.e., $E/\mathbb Q$ is unramified at primes over $\mathfrak p$, $\alpha \in
\OO_{F_{\mathfrak p}}^\times$, and $\ph_{\mathfrak p} =\cha(\OO_{E_{\mathfrak p}})$, one has (\cite[Proposition
1.1]{YaValue}) for $t \ne 0$
$$
\gamma(\mathcal C_{\mathfrak p})^{-1}\, W_{t, v}^*(1, s, \ph_{\mathfrak p})  =\begin{cases}
   0 &\ff t \notin \OO_{\mathfrak p},
   \\
    \ord_{\mathfrak p} t +1 &\ff \mathfrak p \hbox{ split in } E/F, t \in \OO_{\mathfrak p},
    \\
     \frac12(1+ (-1)^{\ord_{\mathfrak p}t}) &\ff \mathfrak p \hbox{ inert in } E/F, t \in
     \OO_{\mathfrak p}.
     \end{cases}
     $$
In general, $\gamma(\mathcal C_{\mathfrak p})^{-1} W_{t, v}^*(1, s, \ph_{\mathfrak p})$ is a
polynomial of $\norm(\mathfrak p)^{-s}$ with coefficients in $\mathbb Q(\ph_{\mathfrak p})$ (\cite{KYEisenstein}). For $t \ne
0$, let $ D(t)=D(t,\mathcal C) $ be  the `Diff' set of places $\mathfrak p$ of
$F$ (including infinite places) such that $\mathcal C_{\mathfrak p}$ does not represent $t$,  as
defined in \cite{KuAnnals}. Then $D(t)$ is a finite set of odd
order, and for every $\mathfrak p \in D(t)$, the local Whittaker function at
$v$ vanishes at $s=0$. So $E_t^{*, \prime}(\vec\tau, 0, \ph)=0$
unless $D(t)$ has exactly one element. Assuming this and
restricting $\vec\tau$ to the diagonal $\tau\DD=(\tau, \cdots, \tau)$ with $\tau =u+ \sqrt{-1} v \in \mathbb H$, there are two subcases.

When $D(t)=\{\sigma_i\}$ for some $i$, the above formulae shows that
 $$
 E_t^{*, \prime}(\tau\DD, 0, \ph,\bold 1) = W_{\sigma_i(t), \sigma_i}^{*,
 \prime}(\tau, 0, \Phi_{\sigma_i}^1) \prod_{\mathfrak p\ne \sigma_i} W_{t,
 \mathfrak p}^*(\cdot, 0, \cdot)
 $$
 is of exponential decay when $v=\text{\rm Im}(\tau) \rightarrow \infty$.

 When $D(t)=\{\mathfrak p \}$ for some finite prime $\mathfrak p$,   $t \in F^\times_+$ is totally positive,
 $$
 E_t^{*, \prime}(\tau\DD, 0, \ph,\bold 1)=a(t, \ph)  \, q^{\tr_{F/\mathbb
 Q}t}, \quad  q=e(\tau)
 $$
 for some $a_t(\ph) \in \mathbb Q(\ph) \log p$, where $p$ is the prime
 below $\mathfrak p$. Here we have used the fact that
$$
\prod_{\mathfrak p <\infty } \gamma(\mathcal C_{\mathfrak p})\prod_{i=1}^{d+1} \gamma(\mathcal C_{\sigma_i}) =-1.
$$
 Finally, for the constant term, one has (see
 e.g., \cite[Section 1]{YaValue} or \cite{KYEisenstein})
 $$
 E_0^{*} (\vec\tau, s, \ph,\bold 1)
  = \ph(0) \left(\norm(\vec v)^{\frac{s}2}\,\Lambda(s+1, \chi)  + \norm(\vec v)^{-\frac{s}2} \Lambda(1-s,
  \chi) M_\ph(s)\right)
  $$
where $M_\ph(s)$ is a product of finitely many polynomials in
$\norm(\mathfrak p)^{-s}$ for finitely many `bad' $\mathfrak p$,  and $M_\ph(0)=-1$.
Recalling that $E_0^*(\tau\DD, 0, \ph,\bold 1) =0$, this gives for $\tau \in \mathbb H$
\begin{equation}
E_{0}^{*, \prime} (\tau\DD, 0, \ph,\bold 1)= \ph(0)\, \left(\Lambda(1, \chi)
(d+1)\log v + 2 \Lambda'(1, \chi) + \Lambda(1, \chi) M_\ph'(0)\right).
\end{equation}
The constant
term  of $E^{*, \prime}(\tau\DD, 0, \ph,\bold 1)$ as a (non-holomorphic)
 elliptic modular form is
$$
E_0^{*, \prime}(\tau\DD, 0, \ph,\bold 1) + \sum_{0\ne t \in F,
\tr_{F/\mathbb Q} t =0} E_{t}^{*, \prime}(\tau\DD, 0, \ph,\bold 1),
$$
where the last sum is of exponential decay when $v=\text{\rm Im}(\tau)
\rightarrow \infty$.  This proves the proposition.
\end{proof}

\section{\bf The main formula} \label{sect5}

Let $L$ be an even integral lattice in $V$, and let $K \subset
G(\hat{\mathbb Q})$ be a compact open subgroup which fixes $L$ and
acts trivially on $L'/L$. We also assume that $K$ satisfies the condition
\begin{equation}\label{fullness}
K\cap \G_m(\widehat\Q) = \widehat \Z^\times,
\end{equation} where $\G_m$ is the kernel of the homomorphism
$\GSpin(V)\rightarrow \SO(V)$. Let $f \in H_{1-d, \bar\rho_L}$ be a
harmonic weak Maass form and let $\Phi(z, h, f)$ be the
corresponding `automorphic' Green function for the divisor $Z(f)$ defined in (\ref{eq:zf}).

For $\vec\tau \in \mathbb H^{d+1}$ and $\tau \in \mathbb H$, define $S_L^\vee$-valued functions by
 \begin{equation}
 E(\vec\tau, s, L,\bold 1) =\sum_{\mu \in L'/L} E(\vec\tau, s, \ph_\mu, \bold 1)\,
 \ph_\mu^\vee,  \qquad  \mathcal E(\tau, L) =\sum_{\mu \in L'/L} \mathcal
 E(\tau, \ph_\mu)\, \ph_\mu^\vee,
\end{equation}
 where $\mathcal E(\tau,\ph)$ is defined in Proposition~\ref{prop4.3}, and  the normalized incoherent Eisenstein series
 $$
 E^*(\vec\tau, s,L, \bold 1) =\Lambda(s+1, \chi)\, E(\vec\tau, s, L,\bold 1).
 $$
Define the $L$-function for an cuspidal modular form $g=\sum_\mu
g_\mu \ph_\mu \in S_{d+1, \rho_L}$
\begin{equation}
\LL(s, g, L) = \langle E^*(\tau\DD, s, L,\bold 1), g\rangle_{\Pet} :=
\int_{\SL_2(\mathbb Z) \backslash \mathbb H}  \sum_\mu
\overline{g_\mu(\tau)}\, E^*(\tau\DD, s, \ph_\mu,\bold 1)\, v^{d+1}\,  d\mu(\tau).
\end{equation}
It can be viewed as the $g$-isotypical component of diagonal
restriction of  the Hilbert-Eisenstein series $E(\vec\tau, s,
L,\bold 1)$.

\begin{remark} \label{rem5.1}  The Eisenstein series $E(\vec\tau, s, L, \bold 1)$ depends on the
$F$-quadratic form on $L \otimes \mathbb Q =W$, not just on the $\Q$-quadratic form  on  $L \otimes \mathbb Q=V$.
When we need to emphasize this dependence on the
$F$-quadratic form,  we will write $L(W)$ rather than  $L$   and
\begin{align*}
E^*(\vec\tau, s, L(W), \bold 1) =E^*(\vec\tau, s, L, \bold 1), \quad \mathcal E(\tau, L(W))= \mathcal E(\tau, L), \quad \LL(s, g, L(W))=\LL(s, g, L)
\end{align*}
We also caution that $L(W)$ might not be an $\OO_F$-lattice, i.e., it might not be $\OO_F$-invariant.
\end{remark}

Since the Eisenstein series has an analytic continuation and
is incoherent, the $L$-series  $\LL(s, g, L)$ has an analytic
continuation and is zero at the central point $s=0$. Now we are ready
to state and prove the main formula.  Here, if $\sum_na_n q^n$ is a
power series in $q$, we write
$$\CT\big[\,\sum_{n} a_n q^n\,\big] = a_0$$
for the constant term.

\begin{theorem} \label{theo5.1} For a harmonic weak Maass form $f\in H_{1-d,\bar\rho_L}$
with components $f= f^++f^-$ as in (\ref{deff}) and with other notation as above,
$$
\Phi(Z(W), f) = C(W,  K) \left(\ \CT\big[\,\langle f^+(\tau), \mathcal
E(\tau, L(W))\rangle\,\big] + \LL'(0, \xi(f),L(W))\ \right).
$$
where $\xi(f)$ is the image of $f$ under the anti-holomorphic operator
$\xi: H_{1-d,\bar\rho_L}\to S_{d+1,\rho_L}$, cf. (\ref{defxi}),
and
$$
C(W, K)=\frac{ \deg(Z(T,z_0^\pm))}{  \Lambda(0, \chi)}.
$$
\end{theorem}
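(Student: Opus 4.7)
The plan is to follow the regularized theta-lifting strategy of Schofer and \cite{BY2}, replacing the coherent pullback used there with an \emph{incoherent} pullback obtained by summing over the Galois orbit $Z(W)$.

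\textbf{Step 1 (Pullback to the big CM cycle).} Apply Proposition~\ref{prop4.2} to each constituent $Z(T(j),z_0^\pm(j),g_j)$ of $Z(W)$. Since $Z(W)$ is the sum over the Galois orbit, and by Lemma~\ref{bymilne} complex conjugation relates $z_0^+(j)$ and $z_0^-(j)$ while preserving $\Phi(\cdot,f)$, one obtains, up to the overall constant $\tfrac{1}{2}\deg Z(T,z_0^\pm)$, that
\[
\theta_L(\tau,Z(W)) \;=\; \tfrac12\deg Z(T,z_0^\pm)\,\sum_{j=0}^{d} E(\tau^\Delta,0,L,\bold 1(j)).
\]
The normalization in Proposition~\ref{prop4.2} absorbs the dependence on the auxiliary $g_j$, thanks to Lemma~\ref{twisting.Weil} and the equivariance of the $\lambda_j$.

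\textbf{Step 2 (From $\bold 1(j)$-series to a $\bar\partial$ of the incoherent series).} Apply Lemma~\ref{lem4.1} to each $j$ and restrict $\vec\tau$ to the diagonal. Because $\partial/\partial\bar\tau=\sum_j \partial/\partial\bar\tau_{\sigma_j}\bigl|_{\tau^\Delta}$ by the chain rule, the sum telescopes to
\[
\sum_{j=0}^{d} E(\tau^\Delta,0,L,\bold 1(j))\,d\mu(\tau)\;=\;-2\,\bar\partial\bigl(E'(\tau^\Delta,0,L,\bold 1)\,d\tau\bigr).
\]
Multiplying by $\Lambda(1,\chi)=\Lambda(0,\chi)$ one may replace $E'$ by $E^{*,\prime}$ up to an additive constant-$\times$-$\log v$ correction, matched by the $\log v$ term of Proposition~\ref{prop4.3}.

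\textbf{Step 3 (Stokes' theorem on the regularized integral).} Writing $\Phi(Z(W),f)=\int^{\rm reg}_{\calF}\langle f,\theta_L(\tau,Z(W))\rangle\,d\mu(\tau)$ and substituting the formula from Step~2, the integrand becomes $\langle f,\bar\partial G\wedge d\tau\rangle$ for $G=E^{*,\prime}(\tau^\Delta,0,L,\bold 1)$. On the truncated domain $\calF_T$, Stokes' theorem gives
\[
\int_{\calF_T}\langle f,\bar\partial G\wedge d\tau\rangle
=\int_{\partial\calF_T}\langle f,G\rangle\,d\tau
-\int_{\calF_T}\langle \bar\partial f,\,G\wedge d\tau\rangle.
\]
The $\Gamma$-invariance of the integrand cancels the side contributions, so only the horizontal segment $v=T$ survives in the boundary term.

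\textbf{Step 4 (Boundary term $\leadsto$ constant term of $\langle f^+,\mathcal E\rangle$).} Using Proposition~\ref{prop4.3}, we decompose
\[
G(\tau)=\mathcal E(\tau,L)+\ph(0)\Lambda(0,\chi)(d{+}1)\log v+O(e^{-\eps v}),
\]
so the boundary integral over $v=T$ is a $u$-integral which, as $T\to\infty$, extracts the Fourier coefficient $\CT[\langle f^+,\mathcal E(\tau,L)\rangle]$, the $\log v$ contribution being absorbed by the Laurent regularization at $s=0$ (cf.\ the treatment in \cite{KuIntegral,Br}).

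\textbf{Step 5 (Bulk term $\leadsto$ $\LL'(0,\xi(f),L)$).} Since $\bar\partial f=\bar\partial f^-$, one rewrites $\bar\partial f=\tfrac{1}{2i}v^{d-1}\overline{\xi(f)(\tau)}\,d\bar\tau$ using the definition of $\xi=\xi_{1-d}$. The bulk integral $-\int_{\calF}\langle\bar\partial f,G\wedge d\tau\rangle$ becomes exactly the Petersson pairing
\[
\int_{\SL_2(\Z)\bs\H}\langle\xi(f)(\tau),E^{*,\prime}(\tau^\Delta,0,L,\bold 1)\rangle\,v^{d+1}\,d\mu(\tau)\;=\;\LL'(0,\xi(f),L),
\]
by definition of $\LL(s,g,L)$; convergence is assured since $\xi(f)$ is cuspidal.

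\textbf{Step 6 (Assembly).} Collecting Steps~1--5 and the normalization $E^*=\Lambda(s+1,\chi)\,E$ gives the claimed identity with $C(W,K)=\deg Z(T,z_0^\pm)/\Lambda(0,\chi)$.

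\textbf{Main obstacles.} The delicate step is Step~3--4: the regularized integral must be handled with the Laurent expansion at $s=0$ as in \cite{Br,BF}, so that the $\log v$ growth coming from Proposition~\ref{prop4.3}, the constant-term contribution of $f^+$ (when $c^+(0,0)\ne 0$), and the incoherence of $E$ combine consistently. The second delicate point is verifying that the normalization produced by Proposition~\ref{prop4.2}, together with the intertwining Lemma~\ref{twisting.Weil} and the assumption (\ref{fullness}), really eliminates the dependence on the Galois-theoretic data $g_j$ and $\mu_j$, so that the $S_L^\vee$-valued sum in Step~1 is well defined and equals the diagonal restriction of the Hilbert Eisenstein series attached to $L$.
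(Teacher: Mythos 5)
Your proposal follows essentially the same strategy as the paper: apply the Siegel--Weil identity (Proposition~\ref{prop4.2}) to rewrite the theta pullback to $Z(W)$ as a sum of coherent Eisenstein series of weights $\bold 1(j)$, use Lemma~\ref{lem4.1} together with the chain rule on the diagonal to assemble these into $-2\,\bar\partial\bigl(E^{*,\prime}(\tau^\Delta,0,L,\bold 1)\,d\tau\bigr)$, then apply Stokes' theorem on $\calF_T$ to the regularized theta integral; the boundary term gives $\CT[\langle f^+,\mathcal E\rangle]$ via Proposition~\ref{prop4.3}, and the bulk term gives the central derivative of $\LL(s,\xi(f),L)$. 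Two points deserve attention.

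First, in Step~5 the identity $\bar\partial f=\frac{1}{2i}v^{d-1}\overline{\xi(f)}\,d\bar\tau$ is off by a sign: from $\xi(f)=2iv^{1-d}\overline{\partial_{\bar\tau}f}$ one gets $\overline{\xi(f)}=-2iv^{1-d}\partial_{\bar\tau}f$, hence $\bar\partial f=-\tfrac{1}{2i}v^{d-1}\overline{\xi(f)}\,d\bar\tau$. Carrying the correct sign through, the bulk integral $I_2=\int\langle\bar\partial f,E^{*,\prime}\,d\tau\rangle$ equals $-\LL'(0,\xi(f),L)$, and the final answer is $C(W,K)\bigl(\CT[\langle f^+,\mathcal E\rangle]-\LL'(0,\xi(f),L)\bigr)$, which is what Theorem~\ref{theo1.1} states. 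The ``$+\LL'$'' in the printed display of Theorem~\ref{theo5.1} is inconsistent with both Theorem~\ref{theo1.1} and the paper's own proof (and the Remark following the theorem flags persistent sign slips of exactly this kind), so your compensating sign error happens to reproduce the typo rather than the intended formula.

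Second, your comment in Step~2 that replacing $E'$ by $E^{*,\prime}$ incurs ``an additive constant-$\times$-$\log v$ correction'' is spurious. Because the Eisenstein series is incoherent, $E(\tau^\Delta,0,L,\bold 1)$ vanishes identically, so $E^{*,\prime}(\tau^\Delta,0,L,\bold 1)=\Lambda(1,\chi)\,E'(\tau^\Delta,0,L,\bold 1)=\Lambda(0,\chi)\,E'(\tau^\Delta,0,L,\bold 1)$ exactly, with no additive term. The $(d+1)\Lambda(0,\chi)\log v$ in Proposition~\ref{prop4.3} belongs to the Fourier expansion of $E^{*,\prime}$ itself and is what must be subtracted against the $\log T$ in the Laurent regularization of the boundary integral; it plays no role in passing from $E'$ to $E^{*,\prime}$.
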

\begin{proof} The proof basically follows the same argument as  in \cite[Theorem
4.8]{BY2}. We write $L$ in place of $L(W)$.  First,  by Lemma \ref{lem4.1} and  Proposition
\ref{prop4.2}, we have
\begin{align*}
\Phi(Z(T(j), z_0(j),g_j),f) &=
 \int_{\mathcal F}^{\reg} \langle f(\tau),
\theta_L(\tau, Z(T(j), z_0(j),g_j))\rangle\,  d\mu(\tau)
 \\
 &= C \int_{\mathcal F}^{\reg} \langle f(\tau), E(\tau\DD, 0, L, \bold 1(j))\,d\mu(\tau)\rangle
 \\
  &= -2 C \int_{\mathcal F}^{\reg} \langle f(\tau), \bar{\partial}_{j}( E'(\tau\DD, 0,
  L,\bold 1)\,d\tau)\rangle.
  \end{align*}
Here $C$ is the constant in Proposition \ref{prop4.2}. So, summing on $j$,
and recalling the definition (\ref{def:CM}) of $Z(W)$, we have
\begin{align*}
\Phi(Z(W), f) &=-4\,C \int_{\mathcal F}^{\reg} \langle f(\tau),
\sum_j\bar{\partial}_{j}( E'(\tau\DD, 0,
  L, \bold 1)\, d\tau)\rangle
  \\
   &= -4 C \int_{\mathcal F}^{\reg} \langle f(\tau),
\bar{\partial}( E'(\tau\DD, 0,
  L, \bold 1)\,d\tau)\rangle
  \\
   &= -4 C \int_{\mathcal F}^{\reg} d( \langle f(\tau),
   E'(\tau\DD, 0, L,\bold 1)\,d\tau\rangle)  +4 C  \int_{\mathcal F}^{\reg} \langle \bar\partial f(\tau),
   E'(\tau\DD, 0, L, \bold 1)\,d\tau\rangle
   \\
   &=-C_0 I_1 +4 C_0 I_2,
   \end{align*}
   where $C_0=4 C \Lambda(0, \chi)^{-1}=C(W, K)$, and
   \begin{align*}
   I_1 &=\int_{\mathcal F}^{\reg} d( \langle f(\tau),
   E^{*, \prime}(\tau\DD, 0, L,\bold 1)\,d\tau\rangle),
   \\
   I_2&=\int_{\mathcal F}^{\reg} \langle \bar\partial f(\tau),
   E^{*, \prime}(\tau\DD, 0, L,\bold 1)\,d\tau\rangle.
   \end{align*}
 Recall that
 $$
 \bar\partial f(\tau) =- \frac{1}{2i} v^{d-1}
 \overline{\xi(f)}\, d \bar\tau.
 $$
Thus
 $$
\langle \bar\partial f(\tau),
   E^{*, \prime}(\tau\DD, 0, L,\bold 1)\,d\tau\rangle =-\langle \overline{\xi(f)},
   E^{*, \prime}(\tau\DD, 0, L,\bold 1)\rangle\, v^{d+1}\, d\mu(\tau)
   $$
   is actually integrable over the fundamental domain $\mathcal F$, and hence
   $$
   I_2=-\int_{\mathcal F}\langle \overline{\xi(f)},
   E^{*, \prime}(\tau\DD, 0, L,\bold 1)\rangle\, v^{d+1} d\mu(\tau)
   =-\LL'(0, \xi(f), L).
   $$
 By the  same argument as in \cite[Proposition 2.5]{KuIntegral},
 \cite[Proposition 2.19]{Scho}, or \cite[Lemma 4.6]{BY2}, there is a
 (unique) constant $A_0$ such that
 $$
 I_1 =\lim_{T \rightarrow  \infty} \left(\int_{\mathcal F_T} d( \langle f(\tau),
   E^{*, \prime}(\tau\DD, 0, L,\bold 1)\,d\tau\rangle) -A_0 \log T\right)=\lim_{T \rightarrow
   \infty} (I_1(T)-A_0 \log T).
$$
By Stokes'  theorem, one has
\begin{align*}
 I_1(T) &=\int_{\partial \mathcal F_T}  \langle f(\tau),
   E^{*, \prime}(\tau\DD, 0, L,\bold 1)\rangle\, d\tau
   \\
    &= -\int_{iT}^{iT+1} \langle f(\tau),
   E^{*, \prime}(\tau\DD, 0, L,\bold 1)\rangle\, du
   \\
    &=-\int_{iT}^{iT+1} \langle f^+(\tau),
   E^{*, \prime}(\tau\DD, 0, L,\bold 1)\rangle\, du +O(e^{-\epsilon T})
   \end{align*}
   for some $\epsilon >0$ since $f^-$ is of exponential decay and
   $E^{*, \prime}$ is of moderate growth.
Proposition~\ref{prop4.3} asserts that
$$
E^{*, \prime}(\tau\DD, 0, L) =\mathcal E(\tau, L) + \Lambda(0,
\chi)\, (d+1)\,\log(v) + \sum_{\mu \in L'/L} \sum_{m \in \mathbb Q} a(m,
\mu, v) q^m
$$
such that $a(m, \mu, v) q^m$ is of exponentially decay as $v
\rightarrow \infty$.  Thus,
$$
-I_1(T)= \CT[\langle f^+(\tau),\mathcal E(\tau, L)\rangle] +
\Lambda(0, \chi)\,(d+1) \log T +\sum_{\mu \in L'/L} \sum_{m+n=0} c^+(m,
\mu) a(n , \mu, T).
$$
The last sum goes to zero when $T\rightarrow \infty$.  So we can
take $A_0=(d+1)\,\Lambda(0, \chi)$, and
$$
I_1=-\CT[\langle f^+(\tau),\mathcal E(\tau, L)\rangle]
$$
as claimed.
\end{proof}

\begin{remark} There is a sign error in front of $\LL'(\xi(f), U, 0)$ in \cite[Theorem 4.7]{BY2} and throughout that
paper caused by this error. The $+\LL'(\xi(f), U, 0)$ in that theorem should be $-\LL'(\xi(f), U, 0)$.
Accidently, in the proof of \cite[Theorem 7.7]{BY2},
there is another sign error relating the Faltings' height and the Neron-Tate height.
Two wrong signs give the correct formula in \cite[Theorem 7.7]{BY2},
which somehow prevented the authors from discovering the sign error earlier.
\end{remark}

As in \cite{BY2}, this theorem raises two interesting conjectures.
We very briefly describe them and refer to \cite[Section 5]{BY2} for
details.  Assume that there is a regular scheme $\calX_K\to
\Spec\Z$, projective and flat over $\Z$, whose associated complex
variety is a smooth compactification $X_K^c$ of $X_K$. Let $\calZ(m
,\mu)$ and $\calZ(W)$ be suitable extensions to $\calX_K$ of the
cycles $Z(m, \mu)$ and $Z(W)$, respectively. Such extensions can be
found in low dimensional cases  using a moduli interpretation of
$\calX_K$.
%
For an $f \in H_{1-g, \bar{\rho}_L}$, the function $\Phi(\cdot,f)$
is a Green function for the divisor $Z(f)$. Set $\calZ(f)
=\sum_\mu \sum_{m>0} c^+(-m, \mu) \calZ(m, \mu)$. Then
the pair
\[
\hat \calZ(f)=(\calZ(f),\Phi(\cdot,f))
\]
defines an arithmetic divisor in $\widehat\CH^1(\calX_K)_\C$.
  Theorem
\ref{theo5.1} provides a formula for the quantity
\begin{align} \label{eq:4.23} \langle \hat \calZ(f), \calZ(W)
\rangle_{\infty}=\frac{1}{2}\Phi(Z(W),f),
\end{align}
and inspires the following  `equivalent' conjectures.

\begin{conjecture}
\label{conj5.2} Let $\mu\in L'/L$,  and let $m\in Q(\mu)+\Z$ be
positive. Then $\calZ(m ,\mu)$ and $\calZ(W)$ intersect properly,
and  the arithmetic intersection number $\langle \calZ(m, \mu),
\calZ(W)\rangle_{fin}$ is equal to $-\frac{1}2 C(W, K)$ times the $(m,\mu)$-th
Fourier coefficient of $ \calE(\tau, L)$.
\end{conjecture}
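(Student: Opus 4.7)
The plan is to extract Conjecture \ref{conj5.2} from Theorem \ref{theo5.1} by exploiting the rigidity of Borcherds lifts. Given $(m, \mu)$ with $m \in Q(\mu) + \Z_{>0}$, I would first seek a weakly holomorphic form $f \in M^!_{1-d, \bar\rho_L}$ whose principal part is concentrated at $(-m, \pm\mu)$ and whose constant term $c^+(0, 0)$ vanishes. When such $f$ exists (unobstructed by cusp forms in $S_{d+1, \rho_L}$), the Borcherds machinery yields a meromorphic modular form $\Psi(z, f)$ of weight zero on $X_K$, rational over $\Q$, with divisor $Z(f) = Z(m, \mu) + Z(m, -\mu)$ and whose logarithmic Petersson norm realises the Green function: $\Phi(z, f) = -2 \log \|\Psi(z, f)\|_\Pet$.

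Fix a regular arithmetic model $\mathcal X_K$ together with suitable closures $\mathcal Z(m, \mu)$ and $\mathcal Z(W)$. Because $\Psi$ has weight zero and is rational over $\Q$, it extends to a nonzero rational function on $\mathcal X_K$, so that $\diva(\Psi) = (\mathcal Z(f), \Phi(\cdot, f))$ is trivial in $\widehat{\CH}^1(\mathcal X_K)_\C$, up to contributions along boundary components of a toroidal compactification. Since $\mathcal Z(W)$ lies in the open part of $\mathcal X_K$, those boundary contributions drop out upon pairing with $\mathcal Z(W)$, and one obtains
\[
0 \;=\; \langle \diva(\Psi), \mathcal Z(W)\rangle \;=\; \langle \mathcal Z(f), \mathcal Z(W)\rangle_{\mathrm{fin}} \;+\; \tfrac{1}{2}\,\Phi(Z(W), f).
\]
Applying Theorem \ref{theo5.1} with $\xi(f) = 0$ gives $\Phi(Z(W), f) = C(W, K)\,\CT[\langle f^+, \mathcal E(\tau, L)\rangle]$, and the prescribed shape of $f^+$ collapses this constant term to a scalar multiple of the $(m, \mu)$-th Fourier coefficient $\mathcal E_m(\mu)$. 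Matching with the decomposition $Z(f) = Z(m, \mu) + Z(m, -\mu)$ (the two divisors being equal) one reads off $\langle \mathcal Z(m, \mu), \mathcal Z(W)\rangle_{\mathrm{fin}} = -\tfrac{1}{2}\,C(W, K)\,\mathcal E_m(\mu)$, which is the claim.

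When no weakly holomorphic test form with the required principal part exists, the strategy is to allow general harmonic weak Maass forms via the exact sequence (\ref{ex-sequ}); the archimedean side then acquires the extra summand $-\tfrac{1}{2} C(W, K)\,\LL'(0, \xi(f), L)$, and cleanly isolating the finite intersection requires identifying this central $L$-derivative with a global Faltings height, which is the content of the companion Faltings-height conjecture of Section \ref{sect5} (established in the Hilbert modular setting by Theorem \ref{theo1.3}).

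The main obstacle is constructing a regular integral model $\mathcal X_K$ together with well-behaved extensions $\mathcal Z(m, \mu)$ and $\mathcal Z(W)$ and verifying the proper-intersection hypothesis: outside the small-$d$ PEL range, no such model is currently available in general, and even when it exists the local intersection multiplicities at primes $p$ supporting the finite Whittaker factor in Proposition \ref{prop4.3} must be matched against the coefficients $a_{m, p}$ by a $p$-adic analysis of special-endomorphism lattices, in the spirit of the Gross--Keating formula. The $c^+(0, 0) \ne 0$ case introduces nontrivial contributions from the line bundle of modular forms of nonzero weight and must be handled using the generalised arithmetic Chow theory of Burgos--Kramer--K\"uhn.
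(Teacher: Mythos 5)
The statement you are asked to prove is labelled a \emph{conjecture} in the paper, and the paper does not in fact prove it; it only states it, observes that it is ``inspired'' by Theorem~\ref{theo5.1} via the identity $\langle \hat \calZ(f), \calZ(W)\rangle_\infty = \tfrac{1}{2}\Phi(Z(W),f)$, and then verifies a special case in the Hilbert modular surface setting (equation~(\ref{eq6.22}) and Theorem~\ref{theo6.16}), where the local intersection computation is imported from \cite{YaGeneral}. So there is no ``paper's own proof'' against which to measure your argument. Your sketch is essentially the motivating heuristic that both this paper and \cite{BY2} record, and you are right that the genuine mathematical content lies precisely in the step you flag as an obstacle: identifying the local intersection multiplicities of $\calZ(m,\mu)\cdot\calZ(W)$ at each finite prime $p$ with the coefficients $a_{m,p}$ coming from the derivatives of the local Whittaker functions in Proposition~\ref{prop4.3}. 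That is a Gross--Keating/arithmetic Siegel--Weil statement and is exactly what makes this a conjecture rather than a corollary of Theorem~\ref{theo5.1}.

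A few additional points on your sketch. First, the archimedean identity $\Phi(\cdot,f) = -2\log\|\Psi(\cdot,f)\|$ and the vanishing of $\diva(\Psi)$ in $\widehat{\CH}^1(\calX_K)_\C$ only give you information about the divisor \emph{as defined by the Borcherds lift over the integral model}; you must still match this with the flat closure $\calZ(f)$, which requires an integral theory of Borcherds products on a regular model of $X_K$ --- in general unavailable outside the small-$d$ PEL range, as you note. Second, when the obstruction space $S_{d+1,\rho_L}$ is nonzero, not every principal part $q^{-m}(\ph_\mu + \ph_{-\mu})$ is realized by a weakly holomorphic $f$; your test-function strategy then only pins down certain linear combinations of the $\langle \calZ(m,\mu),\calZ(W)\rangle_{fin}$, and passing to general harmonic Maass forms reintroduces the $\LL'$ term, so that Conjecture~\ref{conj5.2} and Conjecture~\ref{conj5.3} must be treated together (the paper calls them ``equivalent''). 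You acknowledge this, but it means the linear-algebra reduction does not close the argument even formally. Third, and most importantly, even granting a good integral model and proper intersection, the coefficient-by-coefficient identification $\langle \calZ(m,\mu),\calZ(W)\rangle_{fin} = -\tfrac{1}{2}C(W,K)\,\kappa(m,\mu)$ has to be established by a direct $p$-adic counting of special endomorphisms of abelian varieties (or the relevant motive) at each supersingular prime; the paper's only instance of this is the Hilbert modular case, and it comes from an external reference. In short: the sketch is consistent with the paper's motivation, but as you yourself indicate, it is not and cannot be a proof --- the arithmetic local-global matching is open in the generality of Conjecture~\ref{conj5.2}.
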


\begin{conjecture} \label{conj5.3}
For any $f\in H_{1-d, \bar{\rho}_L}$,
one has
\begin{align}
\label{eq:fh} \langle \hat \calZ(f), \calZ(W) \rangle_{Fal} =\frac{1}2 \,C(W, K)
\left( c^+(0,0)\kappa(0,0)-\LL'( 0, \xi(f), L)\right).
\end{align}
Here $\kappa(0, 0)$ is the constant term of $\mathcal E(\tau, L)$
\end{conjecture}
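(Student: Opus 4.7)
The plan is to deduce Conjecture \ref{conj5.3} from Conjecture \ref{conj5.2} together with the archimedean formula of Theorem \ref{theo5.1}. By the standard decomposition of the arithmetic intersection pairing and \eqref{eq:4.23},
\[
\langle \hat{\calZ}(f),\, \calZ(W)\rangle_{\rm Fal}
= \langle \calZ(f),\, \calZ(W)\rangle_{\rm fin} + \tfrac12\,\Phi(Z(W),f),
\]
so the task reduces to computing each summand and verifying that they combine correctly.

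For the archimedean contribution I would substitute Theorem \ref{theo5.1} directly, using the corrected sign flagged in the sign remark following its proof (matching the sign in Theorem \ref{theo1.1}):
\[
\tfrac12\,\Phi(Z(W),f) = \tfrac12\,C(W,K)\Bigl(\CT\bigl[\langle f^+(\tau),\mathcal E(\tau,L)\rangle\bigr] - \LL'(0,\xi(f),L)\Bigr).
\]
For the finite contribution I would expand $\calZ(f)=\sum_{\mu\in L'/L}\sum_{m>0} c^+(-m,\mu)\,\calZ(m,\mu)$, apply Conjecture \ref{conj5.2} termwise, and exploit the fact that $\mathcal E(\tau,L)$ is holomorphic with only non-negative Fourier indices by Proposition \ref{prop4.3}. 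Writing $\kappa(m,\mu)$ for the $(m,\mu)$-th Fourier coefficient of $\mathcal E(\tau,L)$, this identifies
\[
\CT\bigl[\langle f^+(\tau),\mathcal E(\tau,L)\rangle\bigr] = c^+(0,0)\,\kappa(0,0) + \sum_{\mu\in L'/L}\sum_{m>0} c^+(-m,\mu)\,\kappa(m,\mu),
\]
so that Conjecture \ref{conj5.2} yields
\[
\langle \calZ(f),\, \calZ(W)\rangle_{\rm fin}
= -\tfrac12\,C(W,K)\Bigl(\CT\bigl[\langle f^+(\tau),\mathcal E(\tau,L)\rangle\bigr] - c^+(0,0)\,\kappa(0,0)\Bigr).
\]
Summing the two contributions makes the $\CT\bigl[\langle f^+,\mathcal E\rangle\bigr]$ terms cancel and leaves exactly the right-hand side of \eqref{eq:fh}.

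The assembly above is essentially formal once its inputs are in hand; the genuine obstacle is Conjecture \ref{conj5.2} itself. That conjecture presupposes a regular integral model $\mathcal X_K$ together with extensions $\calZ(m,\mu)$ and $\calZ(W)$, which is feasible only in low-dimensional situations where $\mathcal X_K$ admits a moduli interpretation, such as the Hilbert modular surface case treated in Section \ref{Hilbert}. Even granting such a model, one must verify that the intersection is proper and then match the local intersection multiplicity of $\calZ(m,\mu)$ with $\calZ(W)$ above each rational prime $p$ to the coefficient $a_{m,p}(\ph_\mu)\log p$ appearing in the derivative of the incoherent Eisenstein series, as computed in Proposition \ref{prop4.3}. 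This is the arithmetic Siegel--Weil pattern and requires delicate local Whittaker computations together with arithmetic deformation-length computations for special endomorphisms on CM abelian varieties. When $c^+(0,0)\ne 0$ the generalized Arakelov framework of \cite{BKK} must be invoked, but this does not alter the overall shape of the argument.
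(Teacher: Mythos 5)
Your reduction of Conjecture~\ref{conj5.3} to Conjecture~\ref{conj5.2} via Theorem~\ref{theo5.1} and \eqref{eq:4.23} is exactly the content of the paper's assertion that the two conjectures are ``equivalent,'' and the algebra checks out. In particular you correctly note the sign discrepancy between the stated form of Theorem~\ref{theo5.1} (which has $+\LL'$) and Theorem~\ref{theo1.1} (which has $-\LL'$, the corrected version consistent with the sign remark following the proof); you correctly use the constant term computation $\CT[\langle f^+,\mathcal E\rangle]=c^+(0,0)\kappa(0,0)+\sum_{\mu}\sum_{m>0}c^+(-m,\mu)\kappa(m,\mu)$, implicitly relying on the fact from Proposition~\ref{prop4.3} that the constant term of $\mathcal E(\tau,\ph_\mu)$ is $\ph_\mu(0)a_0(\ph_\mu)$, which vanishes unless $\mu=0$; and the two $\CT$ terms indeed cancel as claimed. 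Since the statement is a conjecture, nothing more than this equivalence can be expected, and you rightly flag that the genuine mathematical content lives in Conjecture~\ref{conj5.2}, established in the paper only in the Hilbert modular surface setting of Section~\ref{Hilbert}. This is consistent with what the authors do and intend.
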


\section{\bf Hilbert modular surfaces} \label{Hilbert}

In this section, we study the case of Hilbert modular surfaces.  Let $F= \mathbb Q(\sqrt D)$ be a real quadratic field of discriminant $D$
with non-trivial Galois automorphism $\sigma$ and different $\partial= \partial_F$.
Let
\begin{equation} \label{eq6.1}
V = \{ M = \kzxz {u} {b \sqrt D} {a/\sqrt D} {\sigma(u)}:\,  a, b \in \Q,  u \in F\}\simeq F\oplus \Q^2,
\end{equation}
with quadratic form
$$
Q(M) =\det(M) = N_{F/\Q}(u) - ab,
$$
of signature $(2,2)$, and let $L$ be the lattice $\OO_F\oplus \Z^2$.
Let $G$ be the algebraic group over $\Q$ such that, for any
$\Q$-algebra $R$,
\begin{equation}\label{GL2F0}
G(R) =\{ g \in \GL_2(F\otimes_\Q R):\,  \det g \in R^\times \}.
\end{equation}
Then  $G \cong \Gspin(V)$ and the action of $G$ on $V$ is given by
$$
g\cdot A = \sigma(g) A g^{-1}.
$$
Let
\begin{equation}
K =\{ \abcd \in G(\hat{\Q}):\,  a, d \in \hat{\OO}_F, c \in
\hat{\partial}^{-1}, b \in \hat{\partial}\}.
\end{equation}
Note that the dual lattice of $L$ is $L' \simeq \partial^{-1}\oplus \Z^2$.
Then it is easy to check that $K$ preserves $L$ and acts trivially on $L'/L$.
By the strong approximation theorem, one has $G(\hat{\Q})=G(\Q)_+ K$ and so
$$X_{K}(\C) = G(\Q)\backslash \big(\,\mathbb D\times G(\hat\Q)/K\,\big) \simeq X=\Gamma\backslash \mathbb H^2,$$
where
$$
\Gamma =\{ g =\abcd \in \text{\rm SL}_2(F):\, a, d \in \OO_F,  c \in \partial^{-1},  b \in \partial \}
$$
and
$$\mathbb D \simeq (\mathbb H^{\pm}\times \mathbb H^\pm)_0.$$
Here the subscript indicates the set of pairs $(z_1,z_2)\in \mathbb H^{\pm}\times \mathbb H^\pm$ such that $\text{\rm Im}(z_1)\text{\rm Im}(z_2)>0$.
Thus, $X_K(\C)$ is a Hilbert modular surface. 

In fact, the canonical model of the Shimura variety $X_K$ over $\Q$ is the coarse moduli scheme over $\Q$ of isomorphic classes of principally polarized abelian surfaces
with real multiplication,  $\bold A = (A, \kappa_0, \lambda)$,
$\kappa_0: \OO_F\rightarrow \End(A)$, \cite[Section 1.27]{rapoport.HB}.
Hence $X = X_K(\C)$ can be naturally identified with the set of isomorphism classes of such objects over $\C$.
This interpretation allows us to define CM $0$-cycles as follows.

\subsection{CM $0$-cycles} Let $E$ be a non-biquadratic quartic CM field with real quadratic subfield $F = \Q(\sqrt{D})$ with fundamental discriminant $D$, and let $\Sigma=\{\sigma_1, \sigma_2\}$ be a fixed   a CM type of $E$. Let $\tilde E$ be the reflex field of $(E, \Sigma)$, the subfield of $\C$ generated by the type norms
\begin{equation} \label{eq:Typenorm}
\norm_\Sigma(z) = \sigma_1(z) \sigma_2(z), \quad z \in E.
\end{equation}
Then $\tilde E$ is also a  quartic CM  number field with real subfield $\tilde F =\Q(\sqrt{\tilde D})$ if
 the absolute discriminant of $E$ is  $d_E =D^2 \tilde D$. Notice that, in general, $\tilde D$  is not the fundamental discriminant of $\tilde F$.

Let $\CM^\Sigma(E)$ be the set of isomorphic classes of principally polarized CM abelian surfaces
 $\bold A=(A, \kappa, \lambda)$  over $\C$ of CM-type $(\OO_E, \Sigma)$: $A$ is a CM abelian surface over $\mathbb C$ with an $\OO_E$-action
  $\kappa: \OO_E \hookrightarrow \End(A)$  and a principal polarization $\lambda: A \rightarrow A^\vee$ satisfying the further conditions:
 (i) the Rosati involution induced by $\lambda$ induces the complex conjugation on
 $E$, and  (ii)  there are two translation invariants, non-zero differentials $\omega_1$ and $\omega_2$ on $A$ over $\C$ such that
$$
\kappa(r)^* \omega_i= \sigma_i(r)\, \omega_i,  \quad r \in \OO_E,  i=1, 2.
$$
There is a natural map (of sets of isomorphism classes)
$$j^\Sigma:\CM^\Sigma(E) \lra X, \qquad \bold A= (A,\kappa,\lambda) \mapsto (A,\kappa|_F,\lambda).$$
We also let
$$j:\CM(E) = \coprod_{\Sigma} \CM^\Sigma(E) \lra X,$$
so that $\CM(E)$ defines a $0$-cycle on $X$.
 The main purpose  of this section is to use Theorem \ref{theo5.1} to derive a formula for $\Phi(\CM(E), f)$ for
 any $f \in H_{0, \bar{\rho}_L}$, which is a generalization of \cite[Theorem 1.4]{BY1}.

\subsection{$\CM(E)$ as an orbit space I} \label{sect6.1}

Given $\bold A=(A, \kappa, \lambda) \in \CM^\Sigma(E)$, let  $M=H_1(A, \Z)$ with the induced $\OO_E$-action and the non-degenerate symplectic form
$$
\lambda:  M \times M  \rightarrow \Z
$$
coming from the polarization of $A$.  In particular, $\lambda$ defines a perfect pairing on $M$ and satisfies
$$
\lambda(\kappa(r) x, y) =\lambda(x, \kappa(\bar r)y), \quad r \in  \OO_E,  x, y \in  M,
$$
so that $(M, \kappa, \lambda)$ is an $\OO_F$-polarized
CM module in the sense of \cite{HY}.  The action $\kappa$ makes  $M$
 a projective $\OO_E$-module of rank one, isomorphic to a
fractional ideal $\mathfrak A\subset E$. The  polarization $\lambda$ induces
a polarization $\lambda_\xi$ on $\mathfrak A$ given by
\begin{equation}\label{xi-polar}
\lambda_\xi: \, \mathfrak A \times \mathfrak A \rightarrow \Z, \quad
\lambda_\xi(x, y) = \tr_{E/\Q} \xi \bar x  y.
\end{equation}
where $\xi \in E^\times$ with
$\bar \xi =-\xi$.
A simple calculation shows that $\lambda$ is a principal polarization if and only if
\begin{equation} \label{eq6.5}
\mathfrak a:=\xi \partial_{E/F} \mathfrak A \bar{\mathfrak A} \cap F = \partial^{-1}.
\end{equation}
Moreover, $\bold A$ is of CM type $\Sigma$ if and only if $\Sigma(\xi) =(\sigma_1(\xi), \sigma_2(\xi)) \in \mathbb H^2$,
see for example \cite[Lemma 3.1]{BY1}.

 The converse is also true;   given $(\mathfrak A, \xi)$ satisfying (\ref{eq6.5}), there is
 a unique CM type $\Sigma$ of $E$ such that $\Sigma(\xi) \in \mathbb H^2$, and  one has
$$
\bold A(\mathfrak A, \xi):=(A=(\mathfrak A\otimes 1)
\backslash (E\otimes_\Q \R), \kappa, \lambda_\xi) \in \CM^\Sigma(E).
$$
Here we identify $E\otimes \R$ with $\C^2$ via the CM type $\Sigma$.

Two such  pairs $(\mathfrak A, \xi_\mathfrak A)$ and $(\mathfrak B,
\xi_\mathfrak B)$ are equivalent if there is an $r \in E^\times$ such that
$$
\mathfrak B =r \mathfrak A, \quad  r \bar r \xi_\mathfrak B =
\xi_\mathfrak A.
$$
Let $\PF(E)$ be the set of
equivalence classes of pairs $(\mathfrak A, \xi_\mathfrak A)$
satisfying (\ref{eq6.5}), and let $\PF^\Sigma(E)$ be the subset of
$(\mathfrak A, \xi_\mathfrak A)$ with $\Sigma(\xi_\mathfrak A) \in
\mathbb H^2$. Let $C(E)=I(E)/P(E) $ be the generalized ideal class
group of $E$, where $I(E)$ is the group of pairs $(\mathfrak Z,
\zeta)$ where $\mathfrak Z$ is a fractional ideal of $E$ and $\zeta
\in F^\times$ with
$$
\mathfrak Z \bar{\mathfrak Z} = \zeta \OO_E,
$$
and $P(E)$ is the subgroup of pairs $(r\OO_E,  r \bar r)$ for $r\in E^\times$.
The group $C(E)$ acts on $\PF(E)$ by
\begin{equation}\label{CEaction}
(\mathfrak Z,\zeta)\action (\mathfrak A, \xi) = (\mathfrak Z\mathfrak A,\zeta^{-1}\xi).
\end{equation}
The
following lemma is easy to check and is left to the reader.

\begin{lemma} Let the notation be as above. Then\hfill\break
(1) The map $(\mathfrak A, \xi) \mapsto \bold A(\mathfrak A,
\xi)$ gives a bijection between $\PF(E)$ and $\CM(E)$ and between $\PF^\Sigma(E)$
and $\CM^\Sigma(E)$.\hfill\break
(2)  The action of the group $C(E)$ on $\PF(E)$ defined by (\ref{CEaction}) is simply transitively.
\end{lemma}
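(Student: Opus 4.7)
The plan is to reduce both parts to the standard Shimura dictionary between polarized CM abelian varieties and polarized fractional ideals, and then carry out a few direct ideal-theoretic computations using the principal polarization condition (\ref{eq6.5}).

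For (1), I would first verify that the assignment $(\mathfrak A,\xi)\mapsto \bold A(\mathfrak A,\xi)$ descends to equivalence classes: if $\mathfrak B=r\mathfrak A$ and $r\bar r\,\xi_\mathfrak B=\xi_\mathfrak A$, multiplication by $r$ furnishes an $\OO_E$-linear isomorphism $\mathfrak A\to\mathfrak B$ of the underlying lattices which intertwines $\lambda_{\xi_\mathfrak A}$ and $\lambda_{\xi_\mathfrak B}$, since $\tr_{E/\Q}\bigl(\xi_\mathfrak B\cdot r\bar r\cdot \bar x y\bigr)=\tr_{E/\Q}(\xi_\mathfrak A\bar x y)$. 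Surjectivity is essentially contained in the discussion preceding the lemma: given $\bold A\in\CM(E)$, the homology $M=H_1(A,\Z)$ is a rank-one projective $\OO_E$-module, hence $\OO_E$-isomorphic to a fractional ideal $\mathfrak A$; the Rosati involution inducing complex conjugation forces $\lambda$ to take the shape $\lambda_\xi$ for a unique $\xi\in E^\times$ with $\bar\xi=-\xi$; principality of $\lambda$ is exactly (\ref{eq6.5}), and the CM-type condition $\bold A\in\CM^\Sigma(E)$ translates to $\Sigma(\xi)\in \mathbb H^2$. For injectivity, any isomorphism $\bold A(\mathfrak A,\xi_\mathfrak A)\simeq \bold A(\mathfrak B,\xi_\mathfrak B)$ induces an $\OO_E$-linear isomorphism of the $H_1$'s which, by rank one, is multiplication by some $r\in E^\times$; compatibility with the polarizations then forces $\mathfrak B=r\mathfrak A$ and $r\bar r\,\xi_\mathfrak B=\xi_\mathfrak A$.

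For (2), I would first check well-definedness of the action via the computation
\[
(\zeta^{-1}\xi)\,\partial_{E/F}(\mathfrak Z\mathfrak A)(\bar{\mathfrak Z}\bar{\mathfrak A})=\zeta^{-1}\bigl(\mathfrak Z\bar{\mathfrak Z}\bigr)\cdot \xi\,\partial_{E/F}\mathfrak A\bar{\mathfrak A}=\xi\,\partial_{E/F}\mathfrak A\bar{\mathfrak A},
\]
using $\mathfrak Z\bar{\mathfrak Z}=\zeta\OO_E$, so (\ref{eq6.5}) is preserved. Freeness: if $(\mathfrak Z,\zeta)\action(\mathfrak A,\xi)\sim(\mathfrak A,\xi)$, unwinding gives $r\in E^\times$ with $\mathfrak Z\mathfrak A=r\mathfrak A$ and $r\bar r\,\zeta^{-1}\xi=\xi$; invertibility of $\mathfrak A$ yields $\mathfrak Z=r\OO_E$, and the second relation gives $\zeta=r\bar r$, so $(\mathfrak Z,\zeta)\in P(E)$. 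Transitivity: given $(\mathfrak A,\xi_\mathfrak A)$ and $(\mathfrak B,\xi_\mathfrak B)$ in $\PF(E)$, I would set $\mathfrak Z=\mathfrak B\mathfrak A^{-1}$ and $\zeta=\xi_\mathfrak A/\xi_\mathfrak B$; one has $\zeta\in F^\times$ because $\bar\xi_\mathfrak A/\bar\xi_\mathfrak B=(-\xi_\mathfrak A)/(-\xi_\mathfrak B)=\zeta$, and dividing the two rewritten instances of (\ref{eq6.5}) gives $\mathfrak Z\bar{\mathfrak Z}=\zeta\OO_E$, so $(\mathfrak Z,\zeta)\in I(E)$ sends $(\mathfrak A,\xi_\mathfrak A)$ to $(\mathfrak B,\xi_\mathfrak B)$.

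The only real subtlety is justifying that the fractional $\OO_E$-ideal $\mathfrak I=\xi\,\partial_{E/F}\mathfrak A\bar{\mathfrak A}$ is extended from a fractional $\OO_F$-ideal, which is what allows (\ref{eq6.5}) to be rewritten as the equality $\mathfrak I=\partial_F^{-1}\OO_E$ used above. Since $\bar\xi=-\xi$, one has $\bar{\mathfrak I}=-\mathfrak I=\mathfrak I$; a Galois-stable fractional $\OO_E$-ideal factors compatibly over split/inert/ramified primes of $\OO_F$ and is therefore of the form $\mathfrak I_F\OO_E$ with $\mathfrak I_F=\mathfrak I\cap F$, which pins the matter down. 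Apart from this small bookkeeping step, everything else in both parts is a direct manipulation of the equivalence relation and ideal arithmetic.
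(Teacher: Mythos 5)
The paper itself gives no proof (the lemma is declared ``easy to check and left to the reader''), so the comparison is only against the correctness of your own argument. Your overall strategy --- descend the map to equivalence classes, quote the surjectivity discussion, use rank-one projectivity for injectivity, and then check well-definedness, freeness, and transitivity of the $C(E)$-action by direct ideal arithmetic --- is exactly the natural one, and all of parts (1) and most of (2) are fine.

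There is, however, a genuine gap in the final paragraph. You need the ideal-theoretic equality $\xi\,\partial_{E/F}\mathfrak A\bar{\mathfrak A}=\partial_F^{-1}\OO_E$ (this is what your transitivity computation $\mathfrak Z\bar{\mathfrak Z}=\zeta\OO_E$ actually divides), and you attempt to obtain it from (\ref{eq6.5}) by arguing that any fractional $\OO_E$-ideal fixed by complex conjugation is extended from a fractional $\OO_F$-ideal. That claim is false at ramified primes: if $\mathfrak p\OO_E=\mathfrak P^2$ with $\mathfrak P=\bar{\mathfrak P}$, a Galois-stable ideal can have odd $\mathfrak P$-valuation $a$, in which case it is not of the form $\mathfrak b\,\OO_E$, while its intersection with $F$ still has $\mathfrak p$-valuation $\lceil a/2\rceil$. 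So Galois-stability plus the intersection condition of (\ref{eq6.5}) does not pin down the ideal. The correct and shorter resolution is to bypass (\ref{eq6.5}) altogether: computing the dual lattice of $(\mathfrak A,\lambda_\xi)$ directly gives $\mathfrak A^\vee=\xi^{-1}\partial_E^{-1}\bar{\mathfrak A}^{-1}$, so the polarization is principal if and only if $\xi\,\partial_E\,\mathfrak A\bar{\mathfrak A}=\OO_E$, equivalently $\xi\,\partial_{E/F}\mathfrak A\bar{\mathfrak A}=\partial_F^{-1}\OO_E$. This is the ideal-theoretic identity you actually use, and (\ref{eq6.5}) should be read as shorthand for it (the intersection statement follows, but is a priori weaker). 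With that reformulation in hand, your well-definedness, freeness, and transitivity verifications are all correct as written.
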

The action of $C(E)$ on $\PF(E)$ gives thus a simply transitive
action of $C(E)$ on $\CM(E)$, which can also be described via the Serre
tensor product construction as in \cite{HY}.

\subsection{Special endomorphisms}
To a pair $(\mathfrak A, \xi)$ in $\PF^\Sigma(E)$, we can associate a lattice
\begin{equation}
L(\mathfrak A, \xi)= \{ j \in \End(\mathfrak A):\,  j\circ \kappa(a) =
\kappa(\sigma(a)) \circ j,  \quad j^* =j \}
\end{equation}
of special endomorphisms, and let $V(\mathfrak A, \xi) =L(\mathfrak A, \xi) \otimes \Q$.
Here $j^*$ is the adjoint of $j$ with respect to the pairing $\lambda_\xi$ defined by (\ref{xi-polar}).
If $\bold A= \bold A(\mathfrak A, \xi)$, we will also write $L(\bold A)$ and $V(\bold A)$ for
$L(\mathfrak A, \xi)$ and $V(\mathfrak A, \xi)$ respectively. In the notation  of \cite{HY}, $L(\bold A)$ is associated to   the  polarized CM module $M=H_1(\bold A)=\mathfrak A$ (not the  lattice of  special endomorphisms associated to the polarized CM abelian variety defined in \cite[Section 2]{HY}).
The following  slight refinement of \cite[Proposition 1.2.2]{HY}
shows that $Q_{\mathfrak A}(j)=j^2$ is a integral quadratic form on $L(\mathfrak A, \xi)$.

\begin{lemma}  \label{lem6.1} Let $(\mathfrak A, \xi) \in \PF^\Sigma(E)$.\hfill\break
(1) There are $\alpha, \beta \in \mathfrak A$ such that
$$
\mathfrak A = \OO_F \alpha + \partial^{-1} \beta, \quad    \xi\,
(\bar{\alpha} \beta - \alpha \bar{\beta}) =1.
$$
(2) Let $\alpha$ and $\beta$ be as in (1) and identify $\mathfrak A$ with $\OO_F \oplus
\partial^{-1}\, \subset F^2$ via
\begin{align*}
f:=f_{\alpha, \beta}: \,  &\mathfrak A \rightarrow  \OO_F \oplus
\partial^{-1}, \quad x \alpha + y \beta  \mapsto  \begin{pmatrix}x\\ y\end{pmatrix},
\\
\noalign{\noindent and define}
\kappa:=\kappa_{\alpha, \beta}:\,  &\OO_E \rightarrow
\End_{\OO_F}(\OO_F \oplus \partial^{-1}) \subset M_2(F),
\end{align*}
by
$$r (\alpha,\beta) \begin{pmatrix}x\\ y\end{pmatrix} = (\alpha,\beta)\,\kappa(r)\begin{pmatrix}x\\ y\end{pmatrix}.$$
Then the polarization $\lambda_\xi$ becomes the  standard symplectic
form $\lambda_{\st}$ on $F^2$ given by
$$
\lambda_{\st}( (x_1, y_1)^t, (x_2, y_2)^t) = \tr_{F/\mathbb Q} (x_1
y_2 -x_2 y_1).
$$
(3)
Moreover, define $j_0\in \End_{\Q}(F^2)$ by  $j_0 ((x, y)^t) = (\sigma(x), \sigma(y))^t$. Then,
for $V$ and $L$ given in (\ref{eq6.1}),  there is a $\Q$-linear isomorphism $V\isoarrow V(\mathfrak A, \xi)$
given by
$v\mapsto v \circ j_0$
and this isomorphism sends $L$ onto $L(\mathfrak A, \xi)$. \hfill\break
(4)   $Q_{\mathfrak A}(j)=j^2$ is a $\Q$-quadratic form on $V(\mathfrak A, \xi)$, and the quadratic lattice
 $(L(\mathfrak A, \xi), Q_{\mathfrak A, \xi}) \cong (L, Q)$ is independent of $(\mathfrak A, \xi)$.
\end{lemma}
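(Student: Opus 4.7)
For (1), the plan is to apply the structure theorem for projective modules over the Dedekind domain $\OO_F$. Since $\mathfrak A$ is an invertible $\OO_E$-module, it is projective of rank $2$ over $\OO_F$, so we may write $\mathfrak A=\OO_F\alpha+\mathfrak b\beta$ for some $F$-basis $\{\alpha,\beta\}$ of $E$ and some fractional $\OO_F$-ideal $\mathfrak b$. Introduce the $F$-valued alternating form $\mu(x,y)=\tr_{E/F}(\xi\bar{x}y)$, so that $\lambda_\xi=\tr_{F/\Q}\circ\mu$. A direct computation using the basis shows $\mu(\mathfrak A,\mathfrak A)=\mathfrak b\cdot\mu(\alpha,\beta)$ as $\OO_F$-ideals in $F$, and the principal polarization condition (\ref{eq6.5}) is equivalent to $\mathfrak b\cdot\mu(\alpha,\beta)=\partial^{-1}$, because $\partial^{-1}$ is the $\tr_{F/\Q}$-dual of $\OO_F$. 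Since $\mu(\alpha,\beta)=\xi(\bar\alpha\beta-\alpha\bar\beta)$ lies in $F^\times$, rescaling $\beta\mapsto\mu(\alpha,\beta)^{-1}\beta$ simultaneously turns $\mathfrak b$ into $\partial^{-1}$ and $\mu(\alpha,\beta)$ into $1$.

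For (2), one expands $\lambda_\xi(x_1\alpha+y_1\beta,\,x_2\alpha+y_2\beta)$ using $\tr_{E/\Q}=\tr_{F/\Q}\circ\tr_{E/F}$. The two diagonal contributions contain $\xi N_{E/F}(\alpha)$ and $\xi N_{E/F}(\beta)$, which are purely imaginary elements of $E$ (since $\bar\xi=-\xi$ while the norms lie in $F$), so they have vanishing $\tr_{E/\Q}$. The cross terms collapse to $\tr_{F/\Q}\bigl(\mu(\alpha,\beta)(x_1y_2-x_2y_1)\bigr)=\tr_{F/\Q}(x_1y_2-x_2y_1)$ by the normalization obtained in (1), matching $\lambda_{\st}$ exactly. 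The embedding $\kappa_{\alpha,\beta}:\OO_E\to M_2(F)$ is then forced by demanding $\OO_E$-equivariance of $f_{\alpha,\beta}$, and has image in $M_2(F)$ because $\mathfrak A$ is free of rank one over $E$.

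For (3), the defining relation $j\circ\kappa(a)=\kappa(\sigma(a))\circ j$ for $a\in\OO_F$ is precisely the statement that $j$ is $\sigma$-antilinear over $F$ as a $\Q$-linear endomorphism of $F^2$. Since $j_0$ is itself a $\sigma$-antilinear involution, the composite $M:=j\circ j_0$ is $F$-linear, hence given by a matrix in $M_2(F)$; conversely every $M\in M_2(F)$ produces such a $j=Mj_0$. Under the identification $\lambda_\xi=\lambda_{\st}$ from (2), the symplectic adjoints satisfy $M^\ast=M^\iota$ and $j_0^\ast=j_0$, so the self-adjointness condition $j^\ast=j$ translates into $j_0 M^\iota=Mj_0$, equivalently $\sigma(M)=M^\iota$. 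This is exactly the defining equation for $V$, yielding a $\Q$-linear bijection $V\to V(\mathfrak A,\xi)$. At the lattice level, a direct check shows $Mj_0$ preserves $\OO_F\oplus\partial^{-1}$ if and only if the entries $u,a,b$ of $M=\kzxz{u}{b\sqrt D}{a/\sqrt D}{\sigma(u)}$ satisfy $u\in\OO_F$ and $a,b\in\Z$, which is precisely the description of $L$.

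Finally, (4) reduces to the computation
\[j^2=(Mj_0)(Mj_0)=M\sigma(M)j_0^2=M\cdot M^\iota=\det(M)\cdot I=Q(M)\cdot I,\]
so $Q_{\mathfrak A}(j)=Q(M)\in\Z$, proving both the integrality of the form $Q_{\mathfrak A}$ and the isometry $(L(\mathfrak A,\xi),Q_{\mathfrak A})\cong(L,Q)$ via the map of (3); independence of $(\mathfrak A,\xi)$ is then automatic since the target $(L,Q)$ is intrinsic. The main obstacle is part (1), specifically establishing $\mu(\mathfrak A,\mathfrak A)=\mathfrak b\cdot\mu(\alpha,\beta)$ together with the translation of the arithmetic condition (\ref{eq6.5}) into the clean normalization $\mathfrak b\cdot\mu(\alpha,\beta)=\partial^{-1}$; once a basis $\{\alpha,\beta\}$ with the correct normalization is in hand, parts (2)--(4) are essentially formal manipulations.
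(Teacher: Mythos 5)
Your proof is correct and follows essentially the same path as the paper's: reduce the $\Z$-valued form $\lambda_\xi$ to the $F$-bilinear alternating form $\mu(x,y)=\tr_{E/F}(\xi\bar x y)$, normalize a Steinitz basis so that $\mu(\alpha,\beta)=1$ and the coefficient ideal is $\partial^{-1}$, then write $j = M\circ j_0$ and translate the self-adjointness and lattice conditions into $\sigma(M)=M^\iota$, $u\in\OO_F$, $a,b\in\Z$, with $j^2=MM^\iota=\det M$ giving (4). The only real difference is cosmetic: where the paper invokes the isomorphism $\OO_F\isoarrow\Hom_{\OO_F}(\Lambda^2_{\OO_F}\mathfrak A,\partial^{-1})$, $a\mapsto\lambda_{a\xi}$ (citing \cite[Lemma 3.1]{BY1}) to derive $\mathfrak f\,\mu(\alpha,\beta)=\partial^{-1}$, you derive the same equality more directly from $\mu(\mathfrak A,\mathfrak A)=\mathfrak b\,\mu(\alpha,\beta)$ and the duality $\Hom_\Z(\mathfrak A,\Z)=\Hom_{\OO_F}(\mathfrak A,\partial^{-1})$; this is slightly more self-contained but encodes the same computation. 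The brief justification you give for the equivalence of condition (\ref{eq6.5}) with $\mathfrak b\,\mu(\alpha,\beta)=\partial^{-1}$ (``because $\partial^{-1}$ is the $\tr_{F/\Q}$-dual of $\OO_F$'') is terse and could be spelled out, but it is the right idea and agrees with what the paper does.
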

\begin{proof}  It is easy to check using (\ref{eq6.5}) that the map
$$
\OO_F  \rightarrow \Hom_{\OO_F}(\Lambda_{\OO_F}^2 \mathfrak A,
\partial^{-1})=\Hom_{\mathbb Z}(\Lambda_{\OO_F}^2 \mathfrak A,
\mathbb Z), \quad a \mapsto \lambda_{a\xi}
$$
is an isomorphism (\cite[Lemma 3.1]{BY1}).

(1):  One can always write
$$
\mathfrak A = \OO_F \alpha + \mathfrak f \beta
$$
for some fractional ideal $\mathfrak f$ of $F$ and $\alpha, \beta
\in \mathfrak A$. Using the above isomorphism and explicit
calculation, one gets
$$\OO_F = (\xi (\bar{\alpha} \beta
-\alpha \bar{\beta}) \mathfrak f \partial_F)^{-1}.
$$
So replacing $\beta$ by $a\beta$ and $\mathfrak f $ by $a^{-1}
\mathfrak f$ if necessary, for some $a$, we may choose $\mathfrak f
=\partial^{-1}$, and $$ \xi (\bar{\alpha} \beta -\alpha \bar{\beta})
=1.
$$

For (2), a simple calculation gives
$$
\lambda_{\st}(f(z_1), f(z_2)) = \lambda_\xi(z_1, z_2)
$$
for $z_l=x_l \alpha + y_l \beta \in \mathfrak A$. Write $j = C\circ
j_0$. Then $j \circ \kappa(a) = \kappa(\sigma(a)) \circ j$ for all
$a \in F$ if and only if $C \in M_2(F)$. Write $w = \kzxz {0} {1}
{-1} {0}$. Then one has  for $z_l=(x_l, y_l)^t \in F^2$
\begin{align*}
\lambda_{\st}(j(z_1), z_2)&= \tr_{F/\Q} (\sigma(x_1), \sigma(y_1))
C^t w (x_2, y_2)^t
\\
 &= \tr_{F/\Q}( (x_1, y_1) \sigma(C)^t w (\sigma(x_2), \sigma(y_2))^t)
 \\
 &=  \tr_{F/\Q}( (x_1, y_1)w  \sigma(C)^\iota  (\sigma(x_2), \sigma(y_2))^t)
\end{align*}
So $j^* = \sigma(C)^\iota \circ j_0$. So $j \in V(\mathfrak A, \xi)$ if and
only if $C \in V$. Next, $ j \in  L(\mathfrak A, \xi) $ if and only if
$$
\kzxz {u} {b \sqrt D} {\frac{a}{\sqrt D}} {\sigma(u)} (\sigma(x),
\sigma(y))^t \in \OO_F \oplus \mathfrak a
$$
for all $(x, y)^t \in \OO_F \oplus \mathfrak a$, i.e., $C \in L$.

For (3),  one simply checks that  $j =C \circ j_0 \in L(\mathfrak A, \xi)$
satisfies  $j^2 =\det C = u \sigma(u) - ab$, which is an integral
quadratic form. So $(L(\mathfrak A, \xi), Q_{\mathfrak A}) \cong (L, Q)$ is independent
of $(\mathfrak A, \xi)$.
\end{proof}

\subsection{$\CM(E)$ as an orbit space II}


The following finer structure on $V(\mathfrak A, \xi)$ was discovered in \cite[Section 1]{HY}, see also \cite[Section 4]{BY1}.

\begin{proposition}\label{fromHY} Let $(\mathfrak A, \xi) \in \PF^\Sigma(E)$. Then
$V(\mathfrak A, \xi)$ has an $\tilde E$-vector space structure defined by
$$
\norm_\Sigma(r) \action j = \kappa(r) \circ j \circ \kappa(\bar r),
\quad  r \in E, \  j \in  V(\mathfrak A, \xi).
$$
Moreover there is a unique $\tilde F$-valued quadratic form $\tilde Q_{\mathfrak A}$ on
$V(\mathfrak A, \xi)$ such that
$$
Q_{\mathfrak A}(j) = \tr_{\tilde F/\Q} \tilde Q_{\mathfrak A}(j), \quad  \langle \tilde r \action
j_1, j_2\rangle_{\mathfrak A} = \langle j_1, \bar{\tilde r}\action j_2
\rangle_{\mathfrak A}
$$
for any $j \in V(\mathfrak A, \xi)$ and $\tilde r \in \tilde E$. Here $\langle\,
, \, \rangle_{\mathfrak A}$ is the symmetric bilinear $\tilde F$-form
associated to $\tilde Q_{\mathfrak A}$.
\end{proposition}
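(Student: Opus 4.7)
The plan is to exploit the visible multiplicativity of the map
$\Psi(r)\colon j\mapsto \kappa(r)\circ j\circ \kappa(\bar r)$
in $r\in E^\times$ and deduce the $\tilde E$-structure from it. First I verify that $\Psi(r)$ preserves $V(\mathfrak A,\xi)$. Since $\kappa(E)$ is commutative,
$\Psi(r)(j)\circ\kappa(a)=\kappa(r)j\kappa(a)\kappa(\bar r)=\kappa(r)\kappa(\sigma(a))j\kappa(\bar r)=\kappa(\sigma(a))\circ\Psi(r)(j)$
for $a\in\mathcal O_F$, and since the Rosati involution swaps $\kappa(r)$ with $\kappa(\bar r)$, the self-adjointness of $j$ carries over to $\Psi(r)(j)$. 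Multiplicativity $\Psi(rr')=\Psi(r)\Psi(r')$ is immediate from associativity.

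The main step, and the main obstacle, is to show that $\Psi$ factors through the type norm and extends to a ring embedding $\tilde E\hookrightarrow\End_\Q V(\mathfrak A,\xi)$. For this I compute the eigenvalues of $\Psi(r)$ on the complexification. Decomposing $\mathfrak A\otimes_\Z\C=\bigoplus_{\tau}\C_\tau$ as an $E\otimes\C$-module, with $\tau$ running over the four embeddings of $E$, one has $\End(\mathfrak A\otimes\C)=\bigoplus_{\tau,\tau'}\Hom(\C_{\tau'},\C_\tau)$, and $\Psi(r)$ acts on the $(\tau,\tau')$-summand by the scalar $\tau(r)\tau'(\bar r)$. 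The condition $j\kappa(a)=\kappa(\sigma(a))j$ for $a\in F$ selects the summands with $\tau|_F=\sigma\circ\tau'|_F$, and self-adjointness with respect to the CM polarization pairs each such $(\tau,\tau')$-summand with its Rosati-dual $(\bar\tau',\bar\tau)$-summand, leaving a four-dimensional space. A direct tabulation shows the four resulting eigenvalues are
\[
\sigma_1(r)\sigma_2(r),\ \overline{\sigma_1(r)\sigma_2(r)},\ \sigma_1(r)\overline{\sigma_2(r)},\ \overline{\sigma_1(r)}\,\sigma_2(r),
\]
namely the four Galois conjugates of $\norm_\Sigma(r)\in\tilde E$. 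Hence $\Psi(r)$ and $\norm_\Sigma(r)$ share the same minimal polynomial over $\Q$, and the assignment $\norm_\Sigma(r)\mapsto\Psi(r)$ extends uniquely $\Q$-linearly to a ring embedding $\tilde E\hookrightarrow\End_\Q V(\mathfrak A,\xi)$; a dimension count shows that $V(\mathfrak A,\xi)$ is free of rank one over $\tilde E$.

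With the $\tilde E$-action in hand, I construct $\tilde Q_{\mathfrak A}$ by trace duality. Let $B_{\mathfrak A}$ be the $\Q$-bilinear polarization of $Q_{\mathfrak A}$; by Lemma~\ref{lem6.1}, $B_{\mathfrak A}(j_1,j_2)=j_1j_2+j_2j_1$ lies in $\Z$ as a scalar endomorphism. A direct computation using the Rosati identity $\kappa(r)^*=\kappa(\bar r)$ together with the invariance of scalars under conjugation verifies
\[
B_{\mathfrak A}(\kappa(r)j_1\kappa(\bar r),j_2)=B_{\mathfrak A}(j_1,\kappa(\bar r)j_2\kappa(r)),
\]
which by $\Q$-linearity yields the Hermitian relation $B_{\mathfrak A}(\tilde r\bullet j_1,j_2)=B_{\mathfrak A}(j_1,\bar{\tilde r}\bullet j_2)$ for all $\tilde r\in\tilde E$. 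Restricting to $\tilde r\in\tilde F$, where $\bar{\tilde r}=\tilde r$, specializes this to $\tilde F$-self-adjointness of $B_{\mathfrak A}$; since $\tilde F/\Q$ is separable the trace pairing on $\tilde F$ is non-degenerate, so there exists a unique $\tilde F$-bilinear form $\tilde B_{\mathfrak A}$ on $V(\mathfrak A,\xi)$ with $B_{\mathfrak A}=\tr_{\tilde F/\Q}\tilde B_{\mathfrak A}$. Setting $\tilde Q_{\mathfrak A}(j):=\tfrac12\tilde B_{\mathfrak A}(j,j)$ yields the desired $\tilde F$-quadratic form, and the full Hermitian identity for $\tilde B_{\mathfrak A}$ follows from the one for $B_{\mathfrak A}$ together with the uniqueness from trace duality.
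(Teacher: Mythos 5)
The paper's own proof of this proposition consists of a single citation to \cite[Section 1]{HY}, with a remark that the reflex field $E^{\reflex}$ there is identified with $\tilde E$ via the reflex homomorphism. You supply a self-contained argument, which is genuinely useful since the paper does not reproduce the details. Your overall strategy --- verify that $\Psi(r)\colon j\mapsto\kappa(r)j\kappa(\bar r)$ preserves $V(\mathfrak A,\xi)$, compute its eigenvalues on the complexification to see that it factors through $\norm_\Sigma$ and yields an action of the four embeddings of $\tilde E$, and then obtain $\tilde Q_{\mathfrak A}$ by $\tilde F/\Q$-trace duality from the Hermitian property of the $\Q$-bilinear form $B_{\mathfrak A}(j_1,j_2)=j_1 j_2+j_2 j_1$ --- is correct and is essentially the natural route, presumably close in spirit to what is carried out in \cite{HY}.

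Two small points worth tightening. First, the passage from ``$\Psi(r)$ and $\norm_\Sigma(r)$ share a minimal polynomial'' to the existence of a ring embedding $\tilde E\hookrightarrow\End_\Q V(\mathfrak A,\xi)$ is a bit compressed: the clean statement is that the eigenvalue tabulation shows $\Psi(r)$ acts on the four one-dimensional eigenspaces of the complexification by the four $\Q$-conjugates of $\norm_\Sigma(r)$, with the assignment of conjugate to eigenspace \emph{independent} of $r$; this simultaneously gives well-definedness (dependence only on $\norm_\Sigma(r)$), $\Q$-linearity of the extension to $\tilde E$, and multiplicativity, in one stroke. Second, your attribution of the Hermitian identity $B_{\mathfrak A}(\kappa(r)j_1\kappa(\bar r),j_2)=B_{\mathfrak A}(j_1,\kappa(\bar r)j_2\kappa(r))$ to ``the Rosati identity together with the invariance of scalars under conjugation'' is slightly off: the Rosati identity $\kappa(r)^*=\kappa(\bar r)$ enters only in the earlier step that $\Psi(r)$ preserves the $j^*=j$ condition. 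The Hermitian identity itself follows by conjugating the scalar $B_{\mathfrak A}(j_1,\kappa(\bar r)j_2\kappa(r))$ by $\kappa(r)$ and then sliding $\kappa(r\bar r)$ past $j_1$ and $j_2$ using the $\sigma$-semilinearity $\kappa(a)\circ j=j\circ\kappa(\sigma(a))$ for $a\in F$, together with $r\bar r=\bar r r$. Fixing that attribution makes the computation transparent. With these clarifications, the proof is complete.
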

\begin{proof} This is described in detail in \cite[Section 1]{HY}.
We just replaced the CM field $E^\reflex$ in  \cite{HY} by  $\tilde E$, which
is isomorphic to $E^\reflex$ via  the reflex homomorphism  induced by
$a \otimes b \mapsto \sigma_1(a) \sigma_2(b)$.
\end{proof}

Let $W(\mathfrak A, \xi) =(V(\mathfrak A, \xi), \tilde Q_{\mathfrak A})$ be the $\tilde F$-quadratic space
associated to $(\mathfrak A, \xi)$, and let $\tilde L(\mathfrak A, \xi)=L(\mathfrak A, \xi)$ but with the $\tilde
F$-quadratic form $\tilde Q_{\mathfrak A}$. Recall that
$$
(\tilde L(\mathfrak A, \xi), \tr_{\tilde F/\Q} \tilde Q_{\mathfrak A}) \cong (L, Q)
$$
as quadratic $\Z$-lattices.   By Proposition~\ref{fromHY}, there is an $\alpha
\in \tilde E^\times$ such that  $W(\mathfrak A, \xi) \cong (\tilde E, \alpha z
\bar z)$,  and so $\SO(W(\mathfrak A, \xi)) =\tilde E^1$. Let $T_E$ be the algebraic
group over $\Q$ such that for any $\Q$-algebra $R$,
\begin{equation}
T_E(R) =\{ t \in (E \otimes R)^\times: \, t \bar t \in \mathbb
Q^\times \}.
\end{equation}
Note that the embedding $\kappa = \kappa_{\alpha,\beta}$ of (2) of Lemma~\ref{lem6.1} identifies $T_E$ with a maximal
torus in the group $G$ defined by (\ref{GL2F0}).
Let $S_{\tilde E}$ be the algebraic group over $\mathbb Q$ such that for
any $\Q$-algebra $R$,
\begin{equation}
S_{\tilde E}(R) =\{ t \in (\tilde E \otimes R)^\times: \, t \bar t =1  \}.
\end{equation}
In particular,  $S_{\tilde E}(\Q) = \tilde E^1$, and  $S_{\tilde E}
=\Res_{\tilde F/\Q}\SO(W(\mathfrak A, \xi))$.
Moreover, by \cite[Lemma 1.4.1]{HY}, the action of $T_E$ on $V(\mathfrak A, \xi)$ defined by
\begin{equation}
t\action j = \frac{1}{t \bar t}  \ \kappa(t) \circ j \circ \kappa(\bar t)
\end{equation}
determines an exact sequence
\begin{equation}
1\longrightarrow\G_m\longrightarrow T_E \overset{\nu_E}{\longrightarrow} S_{\tilde E}\lra 1, \quad \nu_E(t) =
\frac{t \otimes t}{t\bar t}.
\end{equation}
So, under the identification of $G$ with $\GSpin(V)$ given above,  $T_E$ is identified with
the maximal torus $T$ of $\GSpin(V)$ associated to $W(\mathfrak A, \xi)$ by the construction of Section 2.
Note the shift in notation (!), so that we are now writing $\tilde E$ for the field denoted by $E$ in Section 2.
By the construction of Section 2, we then have a CM cycle
$$
Z(W(\mathfrak A, \xi), z_0^\pm) = T(\Q) \backslash \{z_0^\pm \} \times
T(\hat{\Q})/U_E,
$$
where
$$
U_E=\{ r \in \hat{\OO}_E^\times: \, r \bar r \in \hat\Z^\times\} =K \cap
T(\hat{\Q}).
$$
Indeed, $U_E \subset K \cap T(\hat{\Q})$ and $U_E$ is a maximal
compact subgroup of $T(\hat{\Q})$, so $U_E=K \cap T(\hat{\Q})$.
 Let $C(T)
=T(\Q) \backslash T(\hat{\Q})/U_E$ be the `class group' of $T$. Define a homomorphism
$$
C(T) \rightarrow C(E),  \quad  [t] \mapsto [((t),  \zeta_t)],
$$
where $(t)=t\hat{\OO}_E \cap E$ is the ideal of $E$ associated to $t$, and $\zeta_t \in \Q_{>0}$ with
$$
\zeta_t \mathbb Z = t \bar t \hat{\mathbb Z} \cap \Q.
$$
Via this group homomorphism, $C(T)$ acts on $\CM(E)$.

Suppose that
$z_0^+=z_0^+(W(\mathfrak A, \xi))\in \mathbb D^+$
is such that the point $[z_0^+,1]\in X_K(\C)=X$
corresponds to the isomorphism class of $\bA=\bA(\mathfrak A, \xi) \in \CM^\Sigma(E)$.
Then, for $t\in T(\hat \Q)$,
$[z_0^+, t]$ corresponds to $t\action \bA$. The points $z_0^\pm$ go to the same
point in $X$ since $\diag(1, -1) \in G(\Q) \cap K$. On the other hand
$\bar\bA=(A, \bar\kappa, \lambda) \in  \CM^{\bar{\Sigma}}(E)$ also has the same image in $X$ as $\bA$.
So we can view $Z(W(\mathfrak A, \xi), z_0^+)$ as the $C(T)$-orbit of $\bA$ in $\CM^{\Sigma}(E)$
and $Z(W(\mathfrak A, \xi), z_0^-)$ as the $C(T)$-orbit of $\bar{\bA}$ in $\CM^{\bar{\Sigma}}(E)$.
Let $\Sigma'$ and $\bar{\Sigma}'$ be the other two CM types of $E$.

\begin{lemma} \label{lem6.4}  (1)  For any $t \in C(T)$ and $(\mathfrak A, \xi) \in \PF^\Sigma(E)$,
there is an isomorphism
$$( W(t\action (\mathfrak A, \xi)), \tilde Q_{t\action \mathfrak A}) \cong ( W(\mathfrak A, \xi), \tilde Q_{\mathfrak A})$$
of quadratic spaces over $\tilde F$ inducing
an isomorphism $( \hat{L}(t \action (\mathfrak A, \xi)), \tilde Q_{t\action \mathfrak A}) \cong ( \hat{L}(\mathfrak A, \xi), \tilde Q_{\mathfrak A})$.
\hfill\break
(2)  There is a class $\bold c=(\mathfrak Z, \zeta) \in C(E)$ such that $\bold c\action (\mathfrak A, \xi)$ has  CM type $\Sigma'$ for every
$(\mathfrak A, \xi) \in \PF^\Sigma(E)$ and
$(\hat{L}(\bold c \action (\mathfrak A, \xi)), \tilde Q_{\bold c\action \mathfrak A}) \cong (\hat{L}(\mathfrak A, \xi), \tilde Q_{\mathfrak A})$.
\end{lemma}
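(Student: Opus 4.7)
The plan for part (1) is to transport special endomorphisms along the adelic multiplication by $t\in T(\hat\Q)$. Left multiplication by $t$ gives an $\hat\Z$-module isomorphism $\phi_t\colon\hat{\mathfrak A}\overset{\sim}{\to}\widehat{(t)\mathfrak A}$. Since $t\in(E\otimes\hat\Q)^\times$ is central for the $\OO_E$-action, $\phi_t$ commutes with every $\kappa(a)$, $a\in\OO_E$; and by the very definition of $\zeta_t$ one has $\lambda_{\zeta_t^{-1}\xi}(\phi_t x,\phi_t y)=(\zeta_t^{-1}t\bar t)\,\lambda_\xi(x,y)$ with $\zeta_t^{-1}t\bar t\in\hat\Z^\times$. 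The conjugation map $\Phi_t(j')=\phi_t^{-1}\circ j'\circ\phi_t$ therefore preserves the twisted $\OO_F$-linearity, self-adjointness with respect to the polarization, the quadratic form $Q(j)=j^2$, and the $\tilde E$-action $\norm_\Sigma(r)\action j=\kappa(r)\circ j\circ\kappa(\bar r)$. All four properties follow from centrality of $t$ together with the fact that the scaling factor is a unit, and together they realize the desired isomorphism of $\tilde F$-quadratic lattices.

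For part (2), I would change the CM type using a unit $\epsilon_0\in\OO_F^\times$ of norm $-1$, so that $\sigma_1(\epsilon_0)>0$ and $\sigma_2(\epsilon_0)<0$; such a unit exists, for example, when $D\equiv 1\pmod 4$ is prime, which is the setting of the main theorem below. Set $\bold c=(\OO_E,\epsilon_0)$. Then $\bold c\in I(E)$ (because $\epsilon_0\in\OO_E^\times$ gives $\OO_E\bar{\OO_E}=\epsilon_0\OO_E$), and it is a non-trivial class in $C(E)$ since $\epsilon_0$, not being totally positive, is not of the form $r\bar r$. The action $\bold c\action(\mathfrak A,\xi)=(\mathfrak A,\epsilon_0^{-1}\xi)$ flips the sign of $\sigma_2(\xi)$ only, so the new pair lies in $\PF^{\Sigma'}(E)$ with $\Sigma'=\{\sigma_1,\bar\sigma_2\}$. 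For the lattice isomorphism I would apply Lemma~\ref{lem6.1}(1) to both pairs simultaneously, taking $(\alpha,\beta)$ as generators for $(\mathfrak A,\xi)$ and $(\alpha,\epsilon_0\beta)$ for $(\mathfrak A,\epsilon_0^{-1}\xi)$; this is legitimate because $\epsilon_0\in\OO_F^\times$ gives $\epsilon_0\partial^{-1}=\partial^{-1}$, and a short computation yields the normalization $(\epsilon_0^{-1}\xi)(\bar\alpha\cdot\epsilon_0\beta-\alpha\cdot\overline{\epsilon_0\beta})=1$. Under these choices both copies of $\hat L$ are identified with the common $\Z$-quadratic lattice produced by Lemma~\ref{lem6.1}(4); the $\tilde F$-structures then coincide because, for non-biquadratic $E$, the reflex fields attached to $\Sigma$ and $\Sigma'$ share the same real quadratic subfield $\tilde F$, and the reflex-norm action of $\tilde F$ on the common lattice is unaffected by the second-generator substitution $\beta\mapsto\epsilon_0\beta$.

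The delicate step is the $\tilde F$-compatibility in part (2). The $\Z$-quadratic isometry is essentially forced by Lemma~\ref{lem6.1}(4), but upgrading it to an isometry of $\tilde F$-quadratic lattices requires matching two a priori distinct reflex-norm actions on a common underlying $\Z$-lattice. I anticipate this will follow from the explicit formulas for $\kappa(r)\circ j\circ\kappa(\bar r)$ in the matrix model $V\subset M_2(F)$ of (\ref{eq6.1}), together with the observation that $\tilde F$ is characterized intrinsically by the ``mixed'' type-norm values $\norm_\Sigma(r)\overline{\norm_\Sigma(r)}$, which are invariant under the swap $\Sigma\leftrightarrow\Sigma'$.
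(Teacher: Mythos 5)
Your argument for part~(1) uses the same central idea as the paper: transport a special endomorphism $j$ to $j_t$ by conjugating with the adelic multiplication $\phi_t$, observe that this commutes with the $\OO_E$-action, is compatible with the polarization up to a unit scalar, preserves $Q(j)=j^2$, and is $\hat{\tilde E}$-linear, and then invoke the uniqueness of the $\tilde F$-refinement of $Q$ to conclude that $\tilde Q_{t\action\mathfrak A}(j_t)=\tilde Q_{\mathfrak A}(j)$. However, this only produces an isometry of $\hat{\tilde F}$-quadratic modules $\hat L(\mathfrak A,\xi)\cong\hat L(t\action(\mathfrak A,\xi))$. The lemma asserts a \emph{global} isomorphism of $\tilde F$-quadratic spaces $W(t\action(\mathfrak A,\xi))\cong W(\mathfrak A,\xi)$, and your construction does not supply it. The paper closes this gap by noting that both pairs have the same CM type $\Sigma$, so by \cite[Proposition~1.3.5]{HY} the spaces are also isometric at the archimedean places, and then applies the Hasse principle. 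You should add that step; otherwise the chain of quantifiers in the statement (a global isomorphism \emph{inducing} the adelic one) is not addressed.

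Part~(2) is where the real problem lies. The paper proves nothing here itself: it simply cites \cite[Proposition~1.4.3]{HY}. Your alternative construction takes $\bold c=(\OO_E,\epsilon_0)$ for a unit $\epsilon_0\in\OO_F^\times$ with $N_{F/\Q}(\epsilon_0)=-1$ and prescribed signs. This has two defects. First, such a unit does not exist for a general real quadratic field $F$, yet Lemma~\ref{lem6.4} is stated in the setting of Section~6.1, where $F=\Q(\sqrt D)$ with $D$ an arbitrary fundamental discriminant; the hypotheses $D\equiv 1\bmod 4$ prime only enter later (Section~6.4, ``Integral structure''), so you cannot import them here, and even assuming them the norm $-1$ unit fact is extra input that should be justified. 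Second, and more seriously, you explicitly leave open the step you yourself identify as ``the delicate step'' — showing that the $\Z$-level identification of Lemma~\ref{lem6.1}(4) actually matches the two reflex-norm $\tilde F$-structures on the common lattice. Asserting that you ``anticipate'' this is not a proof, and the heuristic about $\norm_\Sigma(r)\overline{\norm_\Sigma(r)}$ being invariant under $\Sigma\leftrightarrow\Sigma'$ does not by itself show that the two $\tilde F$-quadratic forms agree under the substitution $\beta\mapsto\epsilon_0\beta$. As written, part~(2) is a genuine gap: either reproduce the argument of \cite[Proposition~1.4.3]{HY} or carry out the missing verification.
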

\begin{proof} (1) 
It is clear that
$$
\hat f:  \, \hat{L}(\mathfrak A, \xi)  \mapsto   \hat{L}(t \action (\mathfrak A, \xi))
\quad j \mapsto  j_t,
$$
 is a $\hat{\Z}$-linear isomorphism, where $j_t(tm ) = j(m)$ for $m \in \hat{\mathfrak A}$.  It is also clear that the induced isomorphism
$\hat f_\Q: \hat{W}(\mathfrak A, \xi)\rightarrow \hat{W}(t \action \mathfrak A, \xi)$ is
$\hat{\tilde E}$-linear.  One checks
$$
Q(j_t) = j_t^2 =j^2 =Q(j), \quad \hbox{ for all } j \in W(\mathfrak A, \xi).
$$
So, by uniqueness, one has
$$
\tilde Q_{t \action \mathfrak A}(j_t) = \tilde Q_{\mathfrak A}(j),   \quad \hbox{
for all } j \in \hat{W}(\mathfrak A, \xi).
$$
Thus,  $\hat f$ is an $\hat{\tilde F}$-quadratic isomorphism
which sends $\hat{L}(\mathfrak A, \xi)$ onto $\hat{L}(t\action (\mathfrak A, \xi))$. On the other hand, since
both $(\mathfrak A, \xi)$ and $t\action (\mathfrak A, \xi)$ are of the CM type $\Sigma$, there is an  $\tilde F_\infty$-quadratic
isomorphism $f_\infty$ between $W(\mathfrak A, \xi)_\infty$ and $W(t\action (\mathfrak A, \xi))_\infty$ by \cite[Proposition 1.3.5]{HY}.
By the Hasse principle, one has $W(\mathfrak A, \xi) \cong W(t\action (\mathfrak A, \xi))$.  This proves (1). Claim (2) is \cite[Proposition 1.4.3]{HY}.
\end{proof}

\begin{corollary}  \label{cor6.5} For $\bA =\bA(\mathfrak A, \xi)\in \CM^\Sigma(E)$,
let $Z(\bA)=Z(W(\mathfrak A, \xi))$ be the CM cycle defined in Section \ref{sect2}.
Then, as a subset of $\CM(E)$, $Z(\bA)$ is the union of the $C(T)$-orbits
of $\bA$, $\bar{\bA}$, $\bold c \action \bA$, and $\bold c \action \bar{\bA}$,
where $\bold c$ is a fixed element in $C(E)$ satisfying the condition in Lemma \ref{lem6.4}.
\end{corollary}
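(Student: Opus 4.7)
By the definition (\ref{def:CM}) applied with $\tilde F$ in place of $F$ (so that $d+1=2$), the cycle $Z(W(\mathfrak A,\xi))$ is the sum of the four $0$-cycles $Z(T(j),z_0^\pm(j),g_j)$ for $j\in\{0,1\}$ and the two sign choices.  The strategy is to match each piece with one of the four $C(T)$-orbits listed.  The two pieces with $j=0$, namely $Z(T,z_0^+)$ and $Z(T,z_0^-)$, have already been identified in the paragraph immediately preceding the corollary with the $C(T)$-orbits of $\bA\in\CM^\Sigma(E)$ and $\bar{\bA}\in\CM^{\bar\Sigma}(E)$ respectively, so no further work is required for these.

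For the two pieces with $j=1$, I invoke Lemma \ref{bymilne}(1): if $\tau\in\Aut(\C/\Q)$ restricts to $\sigma_1\circ\sigma_0^{-1}$ on $\sigma_0(\tilde E)$, then
$$Z(T(1),z_0^\pm(1),g_1)=\tau\bigl(Z(T,z_0^\pm)\bigr)$$
as $0$-cycles in $X_K$.  Because $X_K$ has a moduli interpretation over $\Q$ and the CM subscheme $\CM(E)\subset X_K$ is stable under $\Aut(\C/\Q)$, the Galois element $\tau$ permutes the CM types of $E$; the change of embedding from $\sigma_0$ to $\sigma_1$ of the reflex field $\tilde E$ sends $\Sigma$ to the unique CM type $\Sigma'\ne\Sigma,\bar\Sigma$ (and $\bar\Sigma$ to $\bar\Sigma'$).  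Hence $\tau(\bA)\in\CM^{\Sigma'}(E)$, and under the identification $T(1)(\hat\Q)=g_1T(\hat\Q)g_1^{-1}$ of (\ref{conjTjT}), the image cycle $Z(T(1),z_0^+(1),g_1)$ is a single $C(T)$-orbit in $\CM^{\Sigma'}(E)$ containing $\tau(\bA)$, and similarly for $z_0^-(1)$ and $\CM^{\bar\Sigma'}(E)$.

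Finally, Lemma \ref{lem6.4}(2) produces $\bold c\in C(E)$ with $\bold c\action\bA\in\CM^{\Sigma'}(E)$ and $W(\bold c\action\bA)\cong W(\bA)$.  To conclude, I note that $\CM(E)$ decomposes as the disjoint union of exactly four $C(T)$-orbits, one per CM type: this follows from the simple transitivity of the $C(E)$-action on $\CM(E)$ (Lemma following equation (\ref{CEaction})) combined with the fact that the natural map $C(T)\to C(E)$ is injective with image of index $4$.  Consequently, any two elements of $\CM^{\Sigma'}(E)$ lie in the same $C(T)$-orbit; applying this to $\tau(\bA)$ and $\bold c\action\bA$ identifies $Z(T(1),z_0^+(1),g_1)$ with the $C(T)$-orbit of $\bold c\action\bA$, and analogously $Z(T(1),z_0^-(1),g_1)$ with that of $\bold c\action\bar{\bA}$.

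\textbf{Main obstacle.} The subtlest point is the last: verifying that each CM-type stratum $\CM^{\Sigma'}(E)$ is a single $C(T)$-orbit.  This requires a careful index computation for $C(T)\hookrightarrow C(E)$, reflecting the compatibility between the torus $T=T_{\tilde E}$ acting on the Shimura variety and the larger generalized ideal class group $C(E)$ acting by Serre tensoring on $\PF(E)=\CM(E)$.  Everything else in the argument is a direct application of the Galois-reciprocity Lemma \ref{bymilne} and the structural results of Section \ref{Hilbert} already established.
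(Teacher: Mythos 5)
Your first three moves are exactly what the paper intends: unwind the definition (2.19) with $\tilde F$ in place of $F$ (so $d=1$ and there are four pieces), read off the $j=0$ pieces from the paragraph immediately before the corollary as the $C(T)$-orbits of $\bA$ and $\bar\bA$, and use Lemma~\ref{bymilne}(1) to identify the $j=1$ pieces as Galois conjugates by $\tau=\sigma_1\circ\sigma_0^{-1}$ landing in $\CM^{\Sigma'}(E)$ and $\CM^{\bar\Sigma'}(E)$.

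The last step, however, is a genuine gap. You claim the map $C(T)\to C(E)$ is injective with image of index exactly $4$, so that each CM-type stratum $\CM^{\Sigma'}(E)$ is a \emph{single} $C(T)$-orbit, and then conclude that $\tau(\bA)$ and $\bold c\action\bA$ must lie in the same orbit because both lie in $\CM^{\Sigma'}(E)$. This index-$4$ assertion is not proved by you and, more to the point, is inconsistent with the paper itself. Theorem~\ref{theo6.7}(2) sums over the orbit space $C(T)\backslash\CM^{\Sigma}(E)$, and Theorem~\ref{theo6.10} carefully carries the factor $|C(T)\backslash\CM^{\Sigma}(E)|$ through the computation of $c'(E)=\deg(\CM(E))/(2\Lambda(0,\chi))$, finally proving $c'(E)=1$ only under the extra hypothesis that $\tilde D$ is prime (see Theorem~\ref{theo6.16}). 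Were each stratum always a single $C(T)$-orbit, that sum would be trivial and Proposition~\ref{prop:onelattice}, which shows all the lattices $\tilde L(\bA)$ lie in one genus under added hypotheses on $D$ and $\tilde D$, would be superfluous, since Lemma~\ref{lem6.4}(1) already gives genus-invariance along a single $C(T)$-orbit.

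What the paper's implicit argument actually uses is that $\bold c$ (produced by [HY, Prop.~1.4.3], cited in the proof of Lemma~\ref{lem6.4}(2)) is constructed to realize the Galois conjugation $\tau$ on the CM $0$-cycle, up to the $C(T)$-action: one needs the CM reciprocity law to identify $\tau(\bA)$ with $\bold c\action\bA$ directly (or at least in the same $C(T)$-orbit), not the weaker fact that they share a CM type. Without that input, the identification of the $j=1$ pieces with the orbits of $\bold c\action\bA$ and $\bold c\action\bar\bA$ does not follow. So the overall route is the paper's, but the final matching step needs the CM-theoretic description of $\bold c$, not an orbit-count argument.
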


\begin{remark} \label{rem6.6}  By the definition in Section \ref{sect2}, each
point in $Z(\bA)$ is counted with multiplicity $\frac{2}{w_E}$, where $w_E$ is
the number of roots of unity in $E$. Furthermore, since $z_0^+$ and $z^-_0$ go to the same
point in $X$ (resp. $\bA$ and $\bar{\bA}$ go to the same point in $X$),  the image of
a point in $\CM(E)$ in $X$ is counted with multiplicity $\frac{4}{w_E}$ in this paper.
\end{remark}

\begin{remark}  A key point here is that the $0$-cycle $\CM(E)$ associated to the
non-biquadratic CM field $E/F$ via moduli coincides with a union of $0$-cycles associated to the
quadratic spaces $W(\mathfrak A, \xi)$ for the non-biquadratic CM fields $\tilde E/\tilde F$
via the Shimura variety construction of Section 2.  Note that the Shimura variety $\text{\rm Sh}(G,\mathbb D)$
for $G=\GSpin(V)$ is only PEL in this case due to an accidental isomorphism; this accounts for the
duality between the roles of the fields $E/F$ and $\tilde E/\tilde F$.
\end{remark}
Combining Theorem \ref{theo5.1} with Corollary \ref{cor6.5}, one has the following theorem.

\begin{theorem} \label{theo6.7}  Let  $f \in H_{0, \bar{\rho}_L}$, and let $E$ be a  non-biquadratic CM quartic field
with real subfield $F$ and a CM type $\Sigma$.  Let
$$
c(E) =\frac{4}{w_E} \frac{|C(T)|}{\Lambda(0, \chi)}.
$$
(1) For  $\bA \in \CM^\Sigma(E)$, we have
$$
\Phi(Z(\bA), f) = c(E) \left( \CT[\langle f^+, \mathcal E(\tau, \tilde L(\bA))\rangle] - \LL'(0, \xi(f), \tilde L(\bA))\right).
$$
(2) We have
$$
\Phi(\CM(E), f) =c(E) \sum_{\bA \in C(T) \backslash \CM^\Sigma(E)}
\left( \CT[\langle f^+, \mathcal E(\tau, \tilde L(\bA))\rangle] - \LL'(0, \xi(f), \tilde L(\bA))\right).
$$
\end{theorem}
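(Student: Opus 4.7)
Part (1) is essentially a direct application of Theorem~\ref{theo5.1}, with the sign in front of $\LL'$ corrected as in the remark following that theorem. Given $\bA=\bA(\mathfrak A,\xi)\in\CM^\Sigma(E)$, Proposition~\ref{fromHY} and the discussion preceding Corollary~\ref{cor6.5} identify $T_E$ with the maximal torus of $G\cong\GSpin(V)$ attached to the $\tilde F$-quadratic space $W(\mathfrak A,\xi)$, and the $0$-cycle $Z(\bA)$ on $X_K$ is by construction the cycle $Z(W(\mathfrak A,\xi))$ of Section~\ref{sect2} (with $\tilde F$, $\tilde E$ playing the roles of the fields called $F$, $E$ in Section~\ref{sect2}). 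Theorem~\ref{theo5.1} therefore gives
\[
\Phi(Z(\bA),f)=C(W(\mathfrak A,\xi),K)\left(\CT[\langle f^+,\mathcal E(\tau,\tilde L(\bA))\rangle]-\LL'(0,\xi(f),\tilde L(\bA))\right),
\]
and the remaining task is to verify that $C(W(\mathfrak A,\xi),K)=c(E)$. Since $K\cap T_E(\hat\Q)=U_E$ and $T_E(\Q)\cap U_E=\mu(E)$ has order $w_E$, the weighted count (\ref{weighted.sum}) and Lemma~\ref{lem4.2} give $\deg Z(T_E,z_0)=2|C(T)|/w_E$ and hence $\deg Z(T_E,z_0^\pm)=4|C(T)|/w_E$, from which $C(W(\mathfrak A,\xi),K)=\deg Z(T_E,z_0^\pm)/\Lambda(0,\chi)=c(E)$, completing (1).

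For (2), Corollary~\ref{cor6.5} describes $Z(\bA)$ as the union of the four $C(T)$-orbits of $\bA$, $\bar\bA$, $\bold c\action\bA$ and $\bold c\action\bar\bA$, lying respectively in the four components $\CM^{?}(E)$ of $\CM(E)$ for $?\in\{\Sigma,\bar\Sigma,\Sigma',\bar\Sigma'\}$. Because complex conjugation and the action of $\bold c\in C(E)$ on $\CM(E)$ commute with the $C(T)$-action, they induce bijections among the four orbit spaces $C(T)\backslash\CM^{?}(E)$. Consequently, as $\bA$ ranges over a complete set of representatives of $C(T)\backslash\CM^\Sigma(E)$, the cycles $Z(\bA)$ form a disjoint partition of $\CM(E)$, and summing the identity from (1) over these representatives yields (2).

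The main (essentially minor) obstacle is the bookkeeping of multiplicities: reconciling the intrinsic $2/w_E$ multiplicity on each underlying point of $Z(T_E,z_0)$ from Section~\ref{sect2} with the $4/w_E$ multiplicity attached in Remark~\ref{rem6.6} to each point of $\CM(E)$ in $X$, and verifying that the lattices attached to the four $C(T)$-orbits comprising $Z(\bA)$ are all isometric to $\tilde L(\bA)$. The required isometries come from Lemma~\ref{lem6.4}(1) for $C(T)$-conjugation, Lemma~\ref{lem6.4}(2) for $\bold c$, and a routine direct argument for complex conjugation using that $W(\bar\bA)\cong W(\bA)$ as $\tilde F$-quadratic spaces.
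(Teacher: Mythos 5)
Your overall strategy matches the paper's, but you have skipped a genuinely nontrivial step in part (1). When Theorem~\ref{theo5.1} is applied to the $\tilde F$-quadratic space $W(\mathfrak A,\xi)$, the constant it produces is
\[
C(W(\mathfrak A,\xi),K)=\frac{\deg Z(W(\mathfrak A,\xi),z_0^\pm)}{\Lambda(0,\tilde\chi)},
\]
where $\tilde\chi$ is the quadratic Hecke character of $\tilde F$ attached to the extension $\tilde E/\tilde F$ — recall that in Section~\ref{sect2} and Theorem~\ref{theo5.1} the field playing the role of ``$E$'' is the CM field generated by the even Clifford algebra of $W$, which here is $\tilde E$, not $E$. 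The constant $c(E)$ in the statement of Theorem~\ref{theo6.7} instead has $\Lambda(0,\chi)$ for the character $\chi$ of the original extension $E/F$. Your proof silently writes $C(W(\mathfrak A,\xi),K)=\deg Z(T_E,z_0^\pm)/\Lambda(0,\chi)$, i.e.\ it tacitly uses the identity
\[
\Lambda(s,\chi)=\Lambda(s,\tilde\chi),
\]
which is not formal; it is a duality relating the complete $L$-functions of a primitive quartic CM field and its reflex field (cf.\ the identity \eqref{eq:Hyecke} in the paper, cited from the proof of Proposition 3.3 of \cite{YaColmez}). Without stating and invoking this identity the constant $c(E)$ does not come out correctly, so this is a genuine gap rather than a cosmetic omission.

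Your degree computation $\deg Z(T_E,z_0^\pm)=4|C(T)|/w_E$ is correct, and your handling of part (2) via Corollary~\ref{cor6.5} — partitioning $\CM(E)$ into the cycles $Z(\bA)$ over representatives of $C(T)\backslash\CM^\Sigma(E)$, with the isometries from Lemma~\ref{lem6.4} (and the observation that $\bA$ and $\bar\bA$ both correspond to $z_0^\pm$ for the same $W(\mathfrak A,\xi)$, hence the same lattice) justifying that each summand carries the lattice $\tilde L(\bA)$ — is in line with the paper's one-line derivation of (2) from (1) and Corollary~\ref{cor6.5}. Once the character identity above is added, the argument is complete.
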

Note that, for $\bA = \bA(\mathfrak A, \xi)$, we are writing $\tilde L(\bA)$ for $\tilde L(\mathfrak A, \xi)$
and $W(\bA)$ for $W(\mathfrak A, \xi)$. Notice also that $\tilde L(\bold A)$ is  $L=L(V)$ with  $\tilde F$-quadratic form $\tilde Q_{\bold A}$.
\begin{proof} By Theorem \ref{theo5.1},  and Corollary \ref{cor6.5}, one has
$$
\Phi(Z(\bA), f) = c_1(E) \left( \CT[\langle f^+, \mathcal E(\tau, \tilde L(\bA))\rangle] - \LL'(0, \xi(f), \tilde L(\bA))\right)
$$
with
$$
c_1(E) = \frac{\deg Z(W(\bold A), z_0^\pm)}{\Lambda(0, \tilde\chi)}.
$$
Here $\tilde\chi$ is the quadratic Hecke character of $\tilde E$ associated to $\tilde E/\tilde F$. By the proof of \cite[Proposition 3.3]{YaColmez}, one has
\begin{equation} \label{eq:Hyecke}
\Lambda(s, \chi) =\Lambda(s, \tilde\chi).
\end{equation}
So $c_1(E) = c(E)$ by Remark \ref{rem6.6}. Claim (2) follows from (1) and Corollary \ref{cor6.5}.
\end{proof}

\subsection{Integral structure}

In this section, we assume that $d_E=D^2 \tilde D$ with $D\equiv 1 \mod 4$
prime and $\tilde D \equiv 1 \mod 4$ square free, and give a more explicit formula for the CM value $\Phi(\CM(E), f)$.
Let $\Sigma$ be again a CM type of $E$ and let $\tilde E$ be its reflex field.
Consider the $\tilde F$-quadratic space
 \begin{equation}\label{deftW}
 \tilde W=\tilde E,  \quad \tilde Q(z) = - \frac{z \bar
 z}{\sqrt{\tilde D}}
 \end{equation}
 with even integral
 lattice $\tilde L = \OO_{\tilde E}$.
 \begin{proposition} \label{prop:onelattice} Let the notation  and assumption be as above. Then for any $\bold A \in \CM^\Sigma(E)$, there is an $\tilde F$-quadratic isomorphism
 $$
 \phi_{\bold A}:  (W(\bold A),  \tilde Q_{\bold A})  \cong  (\tilde W, \tilde Q)
 $$
 such that $\phi_{\bold A}(\tilde L(\bold A))$ is  in the same genus as $\tilde L$. In particular $\tilde L(\bold A)$ is an $\OO_{\tilde F}$-module and all $(\tilde L(\bold A) ,  \tilde Q_{\bold A})$ are in the same genus.
 \end{proposition}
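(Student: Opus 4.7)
The strategy is to first produce an $\tilde F$-linear isometry $\phi_{\bA}$ of the underlying quadratic spaces and then check that this map carries $\tilde L(\bA)$ into the genus of $\tilde L$; the $\OO_{\tilde F}$-module structure on $\tilde L(\bA)$ will then follow by transport of structure. By Proposition \ref{fromHY}, $W(\bA)$ is one-dimensional as an $\tilde E$-vector space, so its $\tilde F$-quadratic form must be of the shape $\alpha \cdot N_{\tilde E/\tilde F}$ for a unique class $\alpha \in \tilde F^\times/N_{\tilde E/\tilde F}(\tilde E^\times)$. Exhibiting $\phi_{\bA}$ thus reduces to showing that $-\alpha\sqrt{\tilde D}$ is a global norm from $\tilde E$, and by the Hasse norm theorem this is a collection of purely local conditions.

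At the two archimedean places of $\tilde F$, the signature of $W(\bA)$ is $(0,2)$ at one and $(2,0)$ at the other, as a consequence of the signature of $V$ built into the Shimura variety and the way $\tilde F$ sits as the reflex real-quadratic subfield. The same signatures hold for $(\tilde W, \tilde Q)$ because $-1/\sqrt{\tilde D}$ has opposite signs under the two real embeddings of $\tilde F$, so the archimedean norm conditions are automatic. At a finite prime $\mathfrak p$ of $\tilde F$ which splits in $\tilde E$, the local norm map is surjective and there is nothing to prove; at the remaining primes the norm index equals $2$ and the condition is a Hilbert-symbol identity. Here the hypothesis that $D$ is prime and $\tilde D$ squarefree (both $\equiv 1 \pmod 4$), together with the identity $d_E = D^2\tilde D$ and the explicit description of $L(\bA)$ afforded by Lemma \ref{lem6.1}, allow one to compute these symbols explicitly and confirm the norm condition at every non-split $\mathfrak p$.

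For the genus assertion, I argue locally at each finite prime $\mathfrak p$ of $\tilde F$. Lemma \ref{lem6.1}(4) identifies the $\Z$-quadratic lattice $(L(\bA), Q_{\bA})$ with the fixed lattice $(L, Q)$, so $\tilde L(\bA) \otimes \Z_p \cong L \otimes \Z_p$ as $\Z_p$-quadratic lattices for every rational prime $p$. Under $\phi_{\bA}$ the $\OO_{\tilde F, \mathfrak p}$-action on $\OO_{\tilde E} \otimes \OO_{\tilde F, \mathfrak p}$ transports to an action on $\tilde L(\bA) \otimes \OO_{\tilde F, \mathfrak p}$, and the resulting rank-two $\OO_{\tilde F, \mathfrak p}$-quadratic lattices sit inside isomorphic local $\tilde F_\mathfrak p$-quadratic spaces and agree after taking the trace to $\Z_p$; a standard Jordan-decomposition argument then forces them to be $\OO_{\tilde F, \mathfrak p}$-isometric. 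This gives both the genus identification and the $\OO_{\tilde F}$-module structure on $\tilde L(\bA)$.

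The main technical obstacle is the local analysis at the small set of primes of $\tilde F$ above $2$ and above the primes dividing $D\tilde D$, where ramification in $F/\Q$, $\tilde F/\Q$, and $\tilde E/\tilde F$ can interact. The arithmetic hypothesis $D, \tilde D \equiv 1 \pmod 4$ with $D$ prime and $\tilde D$ squarefree is essential here: it guarantees that $2$ is unramified in both $F$ and $\tilde F$ and that all relevant ramification is tame, so that the Jordan decomposition of each local lattice is explicit and the final verification reduces to routine Hilbert-symbol bookkeeping.
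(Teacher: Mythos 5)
The genus claim is where your argument breaks down, in two places.

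First, to even run your local comparison you would need to know that $(\tilde L, \tr_{\tilde F/\Q}\tilde Q) \cong (L,Q)$ as $\Z$-quadratic lattices. Lemma~\ref{lem6.1}(4) only identifies $(L(\bA),Q_\bA)$ with $(L,Q)$; it says nothing about $\OO_{\tilde E}$ with the given trace form. You would have to prove this separately, and it is not immediate (in the paper it falls out {\em a posteriori} once the proposition is established, so it cannot be used as an input).

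Second, and more seriously, your proposed ``standard Jordan-decomposition argument'' asserting that a $\Z_p$-isometry of the trace lattices forces an $\OO_{\tilde F,\mathfrak p}$-isometry of the original rank-two lattices has no justification. The transfer $(M,\tilde Q)\mapsto (M,\tr_{\tilde F_\mathfrak p/\Q_p}\tilde Q)$ is not injective on isometry classes of $\OO_{\tilde F,\mathfrak p}$-lattices: the $\Z_p$-isometry need not respect the $\OO_{\tilde F,\mathfrak p}$-module structure, and the trace form genuinely loses information (e.g.\ it cannot distinguish lattices that differ by a Galois twist or, at ramified primes, by a unit scaling). This is precisely the obstacle the paper's proof is designed to avoid. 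The paper instead computes $\phi(\tilde L(\bA))$ explicitly and identifies it with the type-norm ideal $\norm_\Sigma(\mathfrak A)$ (this uses Lemma~\ref{lem6.1} together with \cite[Proposition~4.7]{BY1}), and then uses the type-norm identity $\norm_{\tilde E/\tilde F}(\norm_\Sigma\mathfrak A) = \norm(\mathfrak A)\OO_F$ from (\ref{6.18}) to produce, at each unramified prime, an explicit generator $\alpha$ of $(\norm_\Sigma\mathfrak A)_\mathfrak p$ with $\alpha\bar\alpha = \norm(\mathfrak A)$ and hence a rescaling isometry; at the single ramified prime $\mathfrak D$ above $D$, the Hasse-invariant product formula closes the argument. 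Without the explicit identification of $\phi_{\bA}(\tilde L(\bA))$ as a fractional ideal, you have no handle on the local lattice and no way to produce the required local isometries. Your reduction of the quadratic-space isomorphism to the Hasse norm theorem is conceptually fine and parallels the paper's appeal to the Hasse principle, but the ``routine Hilbert-symbol bookkeeping'' is asserted rather than carried out, and the genus step as written does not work.
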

\begin{proof}  Let  $\bA=\bA(\mathfrak A, \xi) \in \CM^\Sigma(E)$ and let $\alpha$ and  $\beta$ be chosen as in  Lemma \ref{lem6.1}. Define
$$
\phi:  W(\bA)=V  \rightarrow \tilde E,  \quad \phi(A) = \frac{1}{\sqrt D} (\sigma_1(\alpha), \sigma_1(\beta)) A
w (\sigma_2(\alpha), \sigma_2(\beta))^t, \quad w =\kzxz {0} {1} {-1} {0}.
$$
We first prove that  $\phi$ is an isomorphism of quadratic spaces over $\tilde F$ between
$(W(\bA), \tilde Q_\bA))$ and $(\tilde E, -\frac{z \bar z}{\sqrt{\tilde D} \norm(\mathfrak A)})$. To verify the claim,
let
\begin{equation} \label{spaceV1}
V_1 =\{ A \in M_2(F):\,  \sigma(A) = A^t\} =\{ \kzxz {b} {u}
{\sigma(u)} {a}:\, a, b \in \Q,  \, u \in F\}
\end{equation}
with quadratic form $Q_1(A) =D \det A$, and let
\begin{align}
\phi_1: \,  &V \rightarrow V_1,  \quad A \mapsto  \frac{1}{\sqrt D}
A w,
\\
\phi_2: \,  &V_1 \rightarrow E(\Sigma),  \quad A \mapsto
(\pi_1(\alpha), \pi_1(\beta)) A (\pi_2(\alpha),  \pi_2(\beta))^t.
\end{align}
Then $\phi =\phi_2 \circ \phi_1$. It is easy to check that $\phi_1$ is a $\Q$-isomorphism between
$(V, \det )$ and $(V_1, D \det )$. On the other hand, $\phi_2$ is  basically the map in \cite[(4.11)]{BY1},
and is a $\Q$-isomorphism  between $(V_1, D\det)$ and
$(\tilde E,  -\tr_{\tilde F/\Q} \frac{z \bar z}{\sqrt{\tilde D}\norm(\mathfrak A)})$. So $\phi$ is a $\Q$-quadratic
space isomorphism. Next,
For $\tilde r=\norm_\Sigma(r) \in  \tilde E$ with $r \in E^\times$, one has
\begin{align*}
\phi(\tilde r \action A)
 &=\frac{1}{\sqrt D} (\sigma_1(\alpha), \sigma_1(\beta)) \kappa(r) A
 \sigma(\kappa(r)^\iota) w (\sigma_2(\alpha), \sigma_2(\beta))^t
 \\
  &=\frac{1}{\sqrt D} (\sigma_1(r \alpha), \sigma_1(r \beta)) A w
  \sigma(\kappa(r)^\iota)^t (\sigma_2(\alpha), \sigma_2(\beta))^t
  \\
   &= \frac{1}{\sqrt D} (\sigma_1(r \alpha), \sigma_1(r \beta)) A w (\sigma_2(r \alpha), \sigma_2( r \beta))^t
   \\
    &= \sigma_1(r) \sigma_2(r) \phi(A)=\tilde r \phi(A).
\end{align*}
 So $\phi$ is $\tilde E$-linear. So $\phi$ is an $\tilde F$-quadratic space isomorphism
 between $(W(\bA), \tilde Q_\bA)$ and $(\tilde E,  -\frac{z \bar z}{\sqrt{\tilde D}\norm(\mathfrak A)})$, as claimed.

Second,  $$
L^0(\partial_F^{-1})=\{ \kzxz {b} {\lambda} {\sigma(\lambda)} {a} \in
V_1:\,  a \in \frac{1}D \mathbb Z,\,  b \in \mathbb Z,  \lambda \in
\partial^{-1} \}
$$
is a lattice in $(V_1, D \det )$. Then, by \cite[Proposition 4.7]{BY1},  one has
\begin{align*}
 \phi(\tilde L(\bA))
  &= \phi_2 \phi_1(L)
  \\
  &= \phi_2( L^0(\partial_F^{-1}))
  \\
   &= \norm_{\Sigma}(\mathfrak A).
\end{align*}
Here $\norm_{\Sigma}(\mathfrak A)$ is the type norm of $\mathfrak A$ defined as
$$
\norm_\Sigma(\mathfrak A) = \sigma_1(\mathfrak A) \sigma_2(\mathfrak A) \OO_M \cap \tilde E
$$
for any Galois extension $M$ of $\Q$ containing both $E$ and $\tilde E$.  Thus,  $\tilde L(\bA)$ is actually a fractional ideal in $\tilde E$, and in particular an $\OO_{\tilde F}$-lattice, and we have
\begin{equation} \label{neweq6.19}
\phi:  \tilde L(\bA)=(L(\bA), Q_\bA) \cong  (\norm_\Sigma(\mathfrak A), -\frac{z \bar z}{\sqrt{\tilde D}\norm(\mathfrak A)}).
\end{equation}

Third, we prove that for every  $\bA=\bA(\mathfrak A, \xi) \in \CM^\Sigma(E)$, one has for every finite prime $\mathfrak p$ of $\tilde F$
\begin{equation} \label{neweq6.20}
(\norm_\Sigma(\mathfrak A)_\mathfrak p, -\frac{1}{\sqrt{\tilde D}} \frac{z\bar z}{ \norm(\mathfrak A)} ) \cong (\tilde L_\mathfrak p,  \tilde Q).
\end{equation}
Notice that
(\cite[Corollary 4.5]{BY1})
\begin{equation}\label{6.18}
\norm_{\tilde E/\Q}  \partial_{\tilde E/\tilde F} = D,  \quad
\norm_{\tilde E/\tilde F} (\norm_{\Sigma}(\mathfrak A))=
\norm(\mathfrak A) \OO_F.
\end{equation}
In particular, $\tilde E/\tilde F$ is ramified at exactly one prime
$\mathfrak D$ of $\tilde F$ and this prime $\mathfrak D$ is above
$D$. For each prime ideal $\mathfrak p \ne \mathfrak D$ of $\tilde
F$, there is a generator $\alpha$ of $(\norm_\Sigma(\mathfrak
A))_\mathfrak p^\times$ such that $ \alpha \bar{\alpha}  =
\norm(\mathfrak A)$. So $r \mapsto r/\alpha$ gives
$$
((\norm_{\Sigma}\mathfrak A)_\mathfrak p,  -\frac{1}{\sqrt{\tilde D}} \frac{z \bar z}{ \norm(\mathfrak A)} )
\cong (\OO_{\tilde E, \mathfrak p},- \frac{1}{\sqrt{\tilde D}} z \bar z)=(\tilde L_\mathfrak p, \tilde Q).
$$
 For
$\mathfrak p =\mathfrak D$, one has similarly,
$$
(\norm_\Sigma(\mathfrak A), -\frac{z \bar z}{\sqrt{\tilde D}\norm(\mathfrak A)}) \cong (\OO_{\tilde E,
\mathfrak D}, -c \frac{1}{\sqrt{\tilde D}} z \bar z)
$$
for some some $c \in \OO_{\tilde F, \mathfrak D}^\times$ with
$$
\norm(\mathfrak A) = c  \alpha \bar \alpha, \quad \alpha \in \norm_\Sigma(\mathfrak A)_\mathfrak D.
$$
Since both $W(\bA)$ and $\tilde W$ are global $\tilde F$-vector
spaces, and  have thus global Hasse invariant $1$, one sees that
they have the same Hasse invariant at $\mathfrak D$ too. This
implies that
 $c \in \norm_{\tilde E_{\mathfrak D}/\tilde F_\mathfrak
D}(\OO_{\tilde E, \mathfrak D}^\times)$, and  one has again (\ref{neweq6.20}) for the prime $\mathfrak p=\mathfrak D$.
Finally, clearly $W(\bold A)$ and $\tilde W$ are    isomorphic at all infinite places of $\tilde F$ by  (\ref{neweq6.19}).
So there is an $\A_{\tilde F}$ isomorphism of $\A_{\tilde F}$-quadratic spaces
$$
\phi_{\bold A}': \quad  (W(\bold A)_\A,  \tilde Q_\bold A)  \cong  (\tilde W_\A, \tilde Q)
$$
such that $\phi_{\bold A}'(\hat{\tilde L}(\bold A)) = \hat{\tilde L}$. By the Hasse principle, one proves the proposition.
\end{proof}

 Let
 \begin{equation}
 E^{*}(\vec \tau, s, \tilde L, \bold 1) =\sum_{\mu \in \tilde
 L'/\tilde L} E^*(\vec\tau, s, \ph_\mu, \bold 1) \,\ph_\mu
 \end{equation}
be the associated incoherent Eisenstein series, and let $\mathcal
E(\tau, \tilde L)$ be the holomorphic part of $E^{*, \prime}(\tau^\Delta,  0, \tilde L)$ with $\tau \in \mathbb H$.
Note that
$$\tilde L'/\tilde L \simeq \partial^{-1}_{\tilde E/\tilde F}/\OO_{\tilde E} \simeq \Z/ D\Z.$$

\begin{remark} In \cite[Section 6]{BY1} (where our $\tilde F$ is denoted by $F$),  a slightly different  $\tilde F$-quadratic space  is used:
$$
\tilde W^+ = \tilde E,  \quad \tilde Q^+(z) =\frac{1}{\sqrt{\tilde D}} z \bar z
$$
with lattice $\tilde L^+$. This corresponds to $\CM^{\Sigma'}(E)$ where $\Sigma'$ is a CM type of $E$ which is not $\Sigma$ or its complex conjugation.
Notice that $-1 \in \norm_{\tilde E/\tilde F} \hat{\OO}_{\tilde E}^\times$. So $\hat{\tilde L}$ is isomorphic to $\hat{\tilde L}^+$, and  the associated incoherent Eisenstein series are the same:
$$
E^{*}(\vec \tau, s, \tilde L, \bold 1) = E^*(\vec\tau, s, \tilde L^+, \bold 1).
$$
It is interesting to compare this with Lemma \ref{lem6.4}(2).
\end{remark}

 Theorem \ref{theo1.2} is almost clear; we restate it as follows.

\begin{theorem}  \label{theo6.10} Assume $d_E=D^2 \tilde D$ with $D\equiv 1 \mod 4$ prime and $\tilde D \equiv 1 \mod 4$ square free. Let $\tilde L=\OO_{\tilde E}$  with quadratic form $\tilde Q(z) =-\frac{1}{\sqrt{\tilde D}} z \bar z$. Then
$$
\Phi(\CM(E), f) =c'(E) \left( \CT[\langle f^+(\tau), \mathcal
E(\tau, \tilde L)\rangle]- \LL'(0, \xi(f), \tilde L) \right).
$$
where
$$
c'(E) =\frac{\deg (\CM(E))}{2 \Lambda(0, \chi)}.
$$
In  particular, when $\tilde D$ is also prime, $c'(E) =1$.
\end{theorem}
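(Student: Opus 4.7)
The plan is to reduce the formula to Theorem~\ref{theo6.7} by showing that, under the hypotheses $D$ prime and $\tilde D$ squarefree, every summand on the right-hand side of Theorem~\ref{theo6.7}(2) equals the corresponding quantity for the reference lattice $(\tilde L,\tilde Q)$. The crucial input is Proposition~\ref{prop:onelattice}, which for each $\bA\in \CM^\Sigma(E)$ supplies an $\tilde F$-quadratic isomorphism $\phi_\bA: W(\bA) \isoarrow \tilde W$ taking $\tilde L(\bA)$ to a lattice in the same genus as $\tilde L$. Composing adelically with local isometries coming from the genus condition produces an adelic $\tilde F$-quadratic isometry $\hat\Phi_\bA: \hat{\tilde L}(\bA)\isoarrow \hat{\tilde L}$. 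Since the incoherent Eisenstein series $E^*(\vec\tau,s,\tilde L,\bold 1)$, its holomorphic part $\mathcal E(\tau,\tilde L)$, and the convolution $\LL(s,\xi(f),\tilde L)$ are all built from the local Schwartz functions $\varphi_\mu$ through the Weil representation of $\SL_2(\A_{\tilde F})$, and the Weil representation is functorial in local isometries, pullback along $\hat\Phi_\bA$ yields equalities
\[
\mathcal E(\tau,\tilde L(\bA)) = \mathcal E(\tau,\tilde L),\qquad \LL(s,\xi(f),\tilde L(\bA)) = \LL(s,\xi(f),\tilde L)
\]
once $\tilde L(\bA)'/\tilde L(\bA)$ is identified with $\tilde L'/\tilde L$ via $\hat\Phi_\bA$. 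The sum in Theorem~\ref{theo6.7}(2) then collapses to $|C(T)\bs\CM^\Sigma(E)|$ copies of the same bracket.

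Next one computes the combinatorial constant. The group $C(E)$ acts simply transitively on $\PF(E)\simeq\CM(E)$ (Section~\ref{sect6.1}), and the four CM types partition $\CM(E)$ into equal-size strata, so $|\CM^\Sigma(E)| = |C(E)|/4$ and, since $C(T)$ acts freely on $\CM^\Sigma(E)$, $|C(T)\bs\CM^\Sigma(E)| = |C(E)|/(4|C(T)|)$. The multiplicity convention of Remark~\ref{rem6.6} gives $\deg\CM(E) = 2|C(E)|/w_E$. Combined with $c(E) = 4|C(T)|/(w_E \Lambda(0,\chi))$ from Theorem~\ref{theo6.7}, this yields
\[
c(E)\cdot|C(T)\bs\CM^\Sigma(E)| = \frac{|C(E)|}{w_E\Lambda(0,\chi)} = \frac{\deg\CM(E)}{2\Lambda(0,\chi)} = c'(E),
\]
establishing the main formula.

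For the ``in particular'' claim, the plan is to substitute the class-number evaluation $\Lambda(0,\chi) = 2^{1-\delta}h(E)/(w_E h(F))$ (with $d=1$) from (\ref{eq:L-series}), reducing the claim $c'(E)=1$ to the identity $|C(E)| = 2^{1-\delta} h(E)/h(F)$. Under the additional hypothesis that $\tilde D$ is prime, the extension $E/F$ is ramified only at the unique prime above $D$, and genus theory for CM extensions (together with $D$ prime) constrains $h(F)$ and $[\OO_F^\times:\norm_{E/F}\OO_E^\times]$ to force this identity via the standard short exact sequence relating $C(E)$ to $\Cl(E)$.

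The principal obstacle is the first step: verifying that the adelic identification $\tilde L(\bA)'/\tilde L(\bA)\cong \tilde L'/\tilde L$ induced by $\hat\Phi_\bA$ gives a genuine equality --- not merely a proportionality or a permutation of components --- of the $S_L^\vee$-valued forms $\mathcal E(\tau,\tilde L(\bA))$ and $\mathcal E(\tau,\tilde L)$, so that they pair identically against the fixed harmonic Maass form $f$. Once this bookkeeping is complete, the remaining combinatorics and the class-number identity required for $c'(E)=1$ are routine.
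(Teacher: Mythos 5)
Your argument for the main formula is essentially the paper's own: reduce to Theorem~\ref{theo6.7}(2) via Proposition~\ref{prop:onelattice}, which makes every summand equal, and then do the combinatorial bookkeeping $c(E)\cdot|C(T)\backslash\CM^\Sigma(E)| = \frac{4}{w_E}\frac{|\CM^\Sigma(E)|}{\Lambda(0,\chi)} = \frac{1}{w_E}\frac{|\CM(E)|}{\Lambda(0,\chi)} = c'(E)$ using Remark~\ref{rem6.6}, Corollary~\ref{cor6.5} (which gives $4|\CM^\Sigma(E)| = |\CM(E)|$), and freeness of the $C(T)$-action. The potential pitfall you flag at the end --- that $\hat\Phi_\bA$ might permute the discriminant group nontrivially, so that $\mathcal E(\tau, \tilde L(\bA))$ and $\mathcal E(\tau, \tilde L)$ could differ as $S_L^\vee$-valued forms --- is indeed glossed over in the paper, but it is harmless here precisely because of the hypothesis $D$ prime: $\tilde L'/\tilde L\cong \Z/D\Z$ is cyclic of prime order, so the only automorphisms compatible with the quadratic form are $\pm1$, under which the Eisenstein series coefficients $a(t,\ph_\mu)$ are invariant since $Q(\mu)=Q(-\mu)$.

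For the ``in particular'' claim $c'(E)=1$ when $\tilde D$ is also prime, you diverge from the paper. The paper simply cites \cite[(9.2)]{BY1} together with a check of the multiplicity and cycle-size conventions (the CM cycle here is twice the one there, and points carry weight $2/w_E$). You instead propose to prove $|C(E)| = 2^{1-\delta}h(E)/h(F)$ directly by genus theory and a short exact sequence relating $C(E)$ to $\Cl(E)$. This is a legitimate line of attack, and the reduction $c'(E)=1 \Leftrightarrow |C(E)|=2^{1-\delta}h(E)/h(F)$ is correct, but the genus-theoretic argument is only sketched and the hypotheses ($D$ prime, $\tilde D$ prime) enter in a way you have not made explicit; as written it is not a complete proof of this point, whereas the paper's citation closes it. If you want to avoid the external reference, you would need to work out the exact sequence $1\to \OO_F^{\times,+}/\norm_{E/F}\OO_E^\times \to C(E)\to \Cl(E)\to \Cl^+(F)\to\ldots$ and pin down the unit and genus factors explicitly under the stated ramification hypotheses.
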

\begin{proof}
By Proposition  \ref{prop:onelattice}, one has
  $$
  \mathcal E(\tau, \tilde L(\bA)) = \mathcal E(\tau, \tilde L),  \quad \LL(s, \xi(f), \tilde L(\bA))= \LL(s, \xi(f), \tilde L)
  $$
  for all $\bold A \in \CM^\Sigma(E)$.
So   Theorem  \ref{theo6.7} gives
$$
\Phi(\CM(E), f) = c(E) |C(T)\backslash \CM^\Sigma(E)| \left(
\CT[\langle f^+(\tau), \mathcal E(\tau, \tilde L)\rangle]-
\LL'(0, \xi(f), \tilde L) \right).
$$
Finally,
\begin{align*}
c(E)\, |C(T)\backslash \CM^\Sigma(E)|
 &= \frac{4}{w_E} \frac{|\CM^\Sigma(E)|}{\Lambda(0, \chi)}
 \\
 &= \frac{1}{w_E} \frac{|\CM(E)|}{\Lambda(0, \chi)}
 \\
 &=\frac{\deg \CM(E)}{2 \Lambda(0, \chi)}
\end{align*}
as claimed.
The last claim, $c'(E)=1$, follows from this and \cite[(9.2)]{BY1},
since our points here have multiplicity $2/w_E$ and our $\CM(E)$ is twice the CM cycle there.
\end{proof}

\subsection{Scalar modular forms}

In this subsection, we again
assume that $d_E=D^2 \tilde D$ with  $D\equiv 1 \mod 4$ prime and $\tilde D \equiv 1 \mod 4$
square-free, and    translate
Theorem \ref{theo6.10} into the usual language of scalar modular
forms, finally compare it with \cite[Theorem 1.4]{BY1} in the special
case considered there. Under the identification
$$
X \cong \SL_2(\OO_F) \backslash \mathbb H^2,
$$
 the Hirzebruch Zagier divisor $T_n$ defined in \cite{BY1} is related to the special divisor $Z(m, \mu)$
via
\begin{equation} \label{neweq6.17}
T_n = \frac{1}2
\begin{cases}
Z(\frac{n}D, 0) &\ff D|n,
\\
Z(\frac{n}D,  \mu) + Z(\frac{n}D, -\mu) &\ff D\nmid n.
\end{cases}
\end{equation}
Here, in the second case, $\mu\in \tilde L'/\tilde L$ is determined by the condition that $Q(\mu) \equiv \frac{n}D \mod 1$.
Let $k$ be an even integer, and let
 $A_{k, \rho}$ be the space of
real analytic modular forms of weight $k$ with representation $\rho$,
where
$\rho=\rho_{L}$ or $\bar{\rho}_{L}$. Let $A_k^+(D, (\frac{D}{\cdot}))$ be
the space of real analytic modular forms $f_\scal(\tau)  =\sum_n a(n, v)
q^n$ of weight $k$ for the group $\Gamma_0(D)$ with character $(\frac{D}{\cdot})$ such that
$ a(n, v) =0 $ whenever $(\frac{D}{n})=-1$.  Here we use $f_\scal$ to
denote a scalar valued modular form to distinguish it from vector valued modular forms in this paper. Then the following lemma is proved in
\cite{BB}.

\begin{lemma}
\label{lem:iso}
There is an isomorphism of vector spaces $A_{k, \rho} \to  A_k^+(D, (\tfrac{D}{\cdot}))$,
\begin{align*}
f=\sum_{\mu \in L'/L} f_\mu \ph_\mu &\mapsto  f_\scal=
D^{\frac{1-k}2} f_0|W_D.
\end{align*}
The inverse map is given by
$$
f_\scal  \mapsto f =\frac{1}{2}D^{\frac{k-1}2} \sum_{\gamma \in \Gamma_0(D)
\backslash \SL_2(\mathbb Z)}  \left(f_\scal |W_D |\gamma\right) \rho(\gamma)^{-1}\ph_0,
$$
where $W_D=\kzxz {0} {-1} {D} {0}$ denotes the Fricke involution.
Moreover,
if $f_\scal(\tau)  =\sum_n a(n, v) q^n$, then $f$ has the Fourier expansion
\[
f=\frac{1}{2}\sum_{\mu \in L'/L} \sum_{\substack{n\in \Z\\ n\equiv DQ(\mu)\;(D)}} \tilde a(n,v)\, q^{n/D}\, \phi_\mu,
\]
where $\tilde a(n,v)= a(n,v)$ if $n\not\equiv 0\mod{D}$, and  $\tilde a(n,v)= 2\, a(n,v)$ if $n\equiv 0\mod{D}$.
\end{lemma}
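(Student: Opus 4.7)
The statement is a specialization of the general Bruinier--Bundschuh correspondence \cite{BB} between vector-valued modular forms for the Weil representation of a cyclic discriminant form of prime order and scalar-valued forms on $\Gamma_0(D)$ with Nebentypus character $(\tfrac{D}{\cdot})$. The plan is to verify that our lattice falls into this setup and trace through the two maps explicitly.

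First I would identify the discriminant module. Since $L=\OO_F\oplus\Z^2$ and $L'=\partial_F^{-1}\oplus \Z^2$, the quotient $L'/L\simeq \partial_F^{-1}/\OO_F$ is cyclic of order $D$ (here $D$ is prime), and writing $\mu_0$ for the class of $1/\sqrt D\in \partial_F^{-1}$, the induced $\Q/\Z$-valued quadratic form sends $x\mu_0\mapsto -x^2/D\bmod \Z$ (the hyperbolic summand $\Z^2$ contributes trivially). The signature of $V$ is $(2d,2)=(2,2)$, so $(2-n)/8\equiv 0\pmod 1$ in \eqref{eq:weils} and $\rho_L$ has the correct parity to carry modular forms of even weight $k$.

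Second, using \eqref{eq:weilt}--\eqref{eq:weils} together with the Bruhat decomposition of $\gamma\in \Gamma_0(D)$, I would verify the key identity $\rho_L(\gamma)\ph_0=\bigl(\frac{D}{d}\bigr)\ph_0$ for all $\gamma=\kzxz{a}{b}{c}{d}\in \Gamma_0(D)$; this is a direct application of the quadratic reciprocity/Gauss sum evaluations that govern $\rho_L(S)$ when restricted to $\ph_0$. It follows that the zeroth component $f_0$ of $f\in A_{k,\rho_L}$ is a real-analytic scalar form on $\Gamma_0(D)$ with Nebentypus $(\tfrac{D}{\cdot})$, and consequently the normalized Fricke transform $f_{\scal}=D^{(1-k)/2}f_0|W_D$ lies in $A_k(D,(\tfrac{D}{\cdot}))$. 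The plus-condition $a(n,v)=0$ whenever $(\tfrac{D}{n})=-1$ is automatic, because the Fourier exponents of each $f_\mu$ lie in $Q(\mu)+\Z$, hence under $W_D$ only exponents representing squares modulo $D$ survive in $f_0|W_D$.

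Third, for the inverse direction, given $f_{\scal}\in A_k^+(D,(\tfrac{D}{\cdot}))$ I would define
\[
f=\tfrac12\, D^{(k-1)/2}\sum_{\gamma\in\Gamma_0(D)\backslash \SL_2(\Z)}\bigl(f_{\scal}|W_D|\gamma\bigr)\,\rho(\gamma)^{-1}\ph_0,
\]
and check: (a) each summand depends only on the coset of $\gamma$, since $f_{\scal}|W_D$ transforms under $\Gamma_0(D)$ by the Nebentypus $(\tfrac{D}{\cdot})$ which exactly cancels the action of $\rho_L(\gamma)^{-1}$ on $\ph_0$; (b) $f|_{k,\rho_L}\gamma'=f$ for $\gamma'\in \SL_2(\Z)$, by absorbing the right multiplication into the index set; (c) the composition with the forward map returns $f_{\scal}$. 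The factor $\tfrac12$ compensates for $\{\pm I\}\subset \Gamma_0(D)$ acting trivially. Finally, computing the Fourier expansion of this $f$ component by component gives the stated formula, with the factor $2$ in $\tilde a(n,v)$ for $n\equiv 0\pmod D$ arising because in that case both $\mu$ and $-\mu$ collapse to the same class $\mu=0$, so two terms of the $\mu$-sum contribute to the same $q^{n/D}\ph_0$ coefficient.

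The main obstacle is the explicit calculation in step two, namely tracking $\rho_L$ on a full set of coset representatives of $\Gamma_0(D)\backslash\SL_2(\Z)$ and verifying the cancellation of quadratic Gauss sums that produces the clean character $\bigl(\frac{D}{d}\bigr)$; once that is handled the rest is formal and the Fourier expansion comparison reduces to matching coefficients on both sides of the Fricke involution.
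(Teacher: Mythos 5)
The paper does not actually prove this lemma: the sentence immediately before it reads ``Then the following lemma is proved in \cite{BB},'' and no argument is given in the text. Your proposal therefore cannot be compared to a proof in the paper; instead it is an attempted reconstruction of the Bruinier--Bundschuh argument. Judged on its own terms, the outline you give is the right one for this setup --- a cyclic discriminant form of prime order $D$, $\rho_L$ acting on $\ph_0$ through the Nebentypus character over $\Gamma_0(D)$, Fricke involution to transport between the cusps $\infty$ and $0$, and a coset-average as inverse --- and this is indeed what \cite{BB} does in general for prime discriminant lattices.

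Two points should be corrected or strengthened. First, the parenthetical claim that ``$(2-n)/8\equiv 0\pmod 1$'' is false: here $n=\dim V=4$, so $(2-n)/8=-\tfrac14$ and $e((2-n)/8)=-i$. What you actually need is only that the signature $b^+-b^-=2-2=0$ is even, so $\rho_L$ descends from $\Mp_2(\Z)$ to a genuine representation of $\SL_2(\Z)$ and is compatible with an integer weight $k$; the phase $e((2-n)/8)$ itself is not $1$ and plays a nontrivial role in the Gauss-sum bookkeeping you postpone. Second --- and this is the substantial gap --- the identity $\rho_L(\gamma)\ph_0=\bigl(\tfrac{D}{d}\bigr)\ph_0$ for $\gamma\in\Gamma_0(D)$ together with the derivation of the plus-space condition from the Fricke twist is exactly the content of \cite{BB}; you flag it as ``the main obstacle'' but do not carry it out, and the brief remark about ``only exponents representing squares modulo $D$ survive'' conflates the Fourier expansion of $f_0$ at $\infty$ (which has integral exponents, since $Q(0)=0$) with its expansion at the cusp $0$ (where all components $f_\mu$ enter after applying $W_D$). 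To turn your sketch into an actual proof one would need to expand $f_0|W_D$ at $\infty$ via the $S$-transformation of the full vector $f$ and then verify the Gauss-sum cancellations; as written the argument is a plausible plan rather than a proof, though it matches the route the cited reference takes.
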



In particular, the constant term of $f_\scal$ agrees with the constant term of $f$ in the $\ph_0$ component.
%
The isomorphisms of Lemma \ref{lem:iso} take harmonic weak Maass forms to
harmonic weak Maass forms, (weakly) holomorphic modular forms to  (weakly)
holomorphic modular forms, and cusp forms to cusp forms.

Let
\begin{equation}
E_\scal^{*}(\tau, s) = \frac{1}{\sqrt D} \,E^*(\tau^\Delta, s,  \phi_0, \bold 1)| {W_D}
\end{equation}
be the scalar image of $E^*(\tau^\Delta, s, \tilde L, \bold 1)$, and let
$\mathcal E_\scal(\tau)$ be the holomorphic part of
$$
\tilde f(\tau) = \frac{d}{ds}E_\scal^{*}(\tau,s)|_{s=0}.
$$
Then $\tilde f(\tau)$ is the function defined in \cite[(7.2)]{BY1}.  By \cite[Theorem 7.2]{BY1}, we have the following lemma.

\begin{lemma}
Let the notation be as above. Then
$$
\mathcal E_\scal(\tau) = -2 \Lambda'(0, \chi) - 4 \sum_{m \in \Z_{>0}} b_m q^m
$$
where
$$
b_m= \sum_{\substack{ t= \frac{n+m \sqrt{\tilde D}}{2D}  \in d_{\tilde E/\tilde F}^{-1} \\   |n| < m \sqrt{\tilde D}}} B_t
$$
with
$$
B_t = (\ord_\mathfrak l +1) \rho(t d_{\tilde E/\tilde F} \mathfrak l^{-1}) \log \norm(\mathfrak l)
$$
for some (and any) prime ideal of $\tilde F$ with  $\tilde\chi_\mathfrak l(t) =-1$. Here $\tilde\chi$ is the quadratic Hecke character of $\tilde F$ associated to $\tilde E/\tilde F$. Finally
$$
\rho(\mathfrak a) =|\{ \mathfrak A \subset \OO_{\tilde E} :\; \norm_{\tilde E/\tilde F} \mathfrak A =\mathfrak a\}|.
$$
\end{lemma}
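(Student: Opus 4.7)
The plan is to identify the scalar-valued function $\tilde f(\tau)$ defined just before the lemma with the derivative of the incoherent Eisenstein series studied in \cite[Section 7]{BY1} and then quote the Fourier expansion from \cite[Theorem 7.2]{BY1}. Extracting the holomorphic part and repackaging the coefficients in the form displayed in the lemma is then the remaining task.

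First I would verify that the scalarization
$$
E^*_{\scal}(\tau,s) = \frac{1}{\sqrt D}\,E^*(\tau^\Delta, s, \phi_0, \bold 1)\big|W_D
$$
coincides with the normalized incoherent Hilbert Eisenstein series attached to $(\tilde E/\tilde F, \tilde Q)$ that appears in \cite[(7.2)]{BY1}. The Eisenstein series $E^*(\vec\tau,s,\phi_0,\bold 1)$ is the one attached to the lattice $\tilde L=\calO_{\tilde E}$ with $\tilde F$-quadratic form $\tilde Q(z)=-z\bar z/\sqrt{\tilde D}$, its $\phi_0$-component is the diagonal restriction from $\mathbb H^2$ to $\mathbb H$, and the isomorphism in Lemma~\ref{lem:iso} translates the vector-valued form at $\mu=0$ into a scalar modular form for $\Gamma_0(D)$ with character $(\tfrac{D}{\cdot})$. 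Matching the explicit formula for the local Whittaker factors computed in Proposition~\ref{prop4.3} with those used in \cite{BY1}, and checking that the Fricke twist $W_D$ together with the factor $D^{-1/2}$ produces the same normalization of the incoherent Eisenstein series in the scalar setting, gives the required identification.

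Once $E^*_{\scal}(\tau,s)$ is identified with the Eisenstein series of \cite{BY1}, the function $\tilde f(\tau)=\frac{d}{ds}E^*_{\scal}(\tau,s)|_{s=0}$ is exactly the function denoted by $\tilde f$ in \cite[(7.2)]{BY1}. Applying \cite[Theorem 7.2]{BY1} gives the complete Fourier expansion of $\tilde f(\tau)$: the constant term equals $-2\Lambda'(0,\tilde\chi)$, and the nonconstant coefficients have the shape $-4\sum_{m>0}b_m q^m$ with the $b_m$ described as sums over totally positive $t\in\partial_{\tilde E/\tilde F}^{-1}$ of trace $m$, each contributing $(\ord_{\mathfrak l}+1)\rho(t\,\partial_{\tilde E/\tilde F}\mathfrak l^{-1})\log\mathrm{N}(\mathfrak l)$ for any prime $\mathfrak l$ of $\tilde F$ with $\tilde\chi_{\mathfrak l}(t)=-1$ (the expression being independent of the choice of $\mathfrak l$ in $\mathrm{Diff}(t)$). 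Parametrizing such $t$ as $(n+m\sqrt{\tilde D})/(2D)$ with $|n|<m\sqrt{\tilde D}$ accounts for the totally positivity condition and the trace normalization. The identity $\Lambda(s,\chi)=\Lambda(s,\tilde\chi)$ proved in (\ref{eq:Hyecke}) lets us write $\Lambda'(0,\tilde\chi)=\Lambda'(0,\chi)$ to match the statement.

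Finally, extracting the holomorphic part of $\tilde f(\tau)$ gives $\mathcal E_{\scal}(\tau)$: the nonholomorphic contribution of $\tilde f$ comes solely from the terms with $\text{Diff}(t)=\{\sigma\}$ for an archimedean place $\sigma$, which are of exponential decay and contribute nothing to the holomorphic part, while the finite-prime terms are already holomorphic. The only nontrivial step beyond bookkeeping is the precise matching of the overall scalar factor, and the main obstacle will be tracking the combination of the $\frac1{\sqrt D}$ factor, the Fricke twist $W_D$, the $\tfrac12$ appearing in the inverse of the map of Lemma~\ref{lem:iso}, and the sign conventions for the local Whittaker functions, so as to land on the precise coefficients $-2\Lambda'(0,\chi)$ and $-4b_m$ in the statement.
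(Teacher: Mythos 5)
Your proposal follows essentially the same route as the paper: the paper's own proof reduces the lemma to the observation that $\tilde f(\tau) = \frac{d}{ds}E_\scal^{*}(\tau,s)|_{s=0}$ coincides with the function defined in \cite[(7.2)]{BY1}, and then invokes \cite[Theorem 7.2]{BY1} to read off the Fourier expansion, using (\ref{eq:Hyecke}) to replace $\Lambda'(0,\tilde\chi)$ by $\Lambda'(0,\chi)$. The extra checks you flag (the normalization under $W_D$ and the $1/\sqrt D$ factor, the extraction of the holomorphic part by discarding the exponentially decaying terms with archimedean $\mathrm{Diff}$) are exactly the bookkeeping that underlies the paper's one-line citation.
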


Now let $f_\scal =f_\scal^{+} + f_\scal^{-} \in H_0^+(D, (\frac{D}{\cdot}))$ be a harmonic weak Maass form with holomorphic part
 $$
 f_\scal^{+}(\tau) = \sum_{n \gg -\infty} c^+(n) q^n,
 $$
and let
$$
\tilde c^+(n)= \begin{cases}
  2 c^+(n) &\ff D|n,
  \\
  c^+(n) &\ff D \nmid n.
  \end{cases}
$$
Let $f \in H_{0, \bar{\rho}_{{\tilde L}}}$ be the associated vector valued harmonic weak Maass form.
Define
\begin{equation}
T(f_\scal) = \sum_{n >0} \tilde c^+(-n) T_n, \quad  \Phi(z, f_\scal) := \Phi(z, f).
\end{equation}
Then  one sees that $T(f_\scal) = Z(f)$ by  (\ref{neweq6.17}).
Define the Rankin-Selberg $L$-series
\begin{equation}
\LL_\scal(s, \xi(f_\scal),  \tilde L) =\langle  E_\scal^{*}(\tau,s), \xi(f_\scal)
\rangle_{\Pet}.
\end{equation}

Then a straightforward calculation gives

\begin{lemma}
 (1) \quad
$$
\LL(s, \xi(f), \tilde L) =\frac{1}2 D(D+1) \LL_\scal(s, \xi(f_\scal),\tilde L) .
$$

(2) \quad
$$
\CT[\langle f^+, \mathcal E(\tau, \tilde L)\rangle]
  = -2 c^+(0) \Lambda'(0,\chi) - 2 \sum_{n>0} \tilde c^+(-n) b_n.
$$
\end{lemma}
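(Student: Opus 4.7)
My plan is to deduce both identities from Lemma~\ref{lem:iso}, applied at weight $k=d+1=2$, together with Fourier-coefficient bookkeeping.

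For part~(1), I first observe that under the isomorphism $A_{k,\rho_L}\cong A_k^+(D,(\tfrac{D}{\cdot}))$ at $k=2$, the vector-valued Eisenstein series $E^*(\tau^\Delta,s,\tilde L,\mathbf 1)$ corresponds to the scalar $E_\scal^*(\tau,s)$: this is essentially the definition of $E_\scal^*$, since the scalarisation factor $D^{(1-k)/2}$ equals $D^{-1/2}$ at $k=2$ and matches the factor in the definition $E_\scal^*(\tau,s)=D^{-1/2}E^*(\tau^\Delta,s,\varphi_0,\mathbf 1)|W_D$. Similarly the cusp form $\xi(f)\in S_{2,\rho_L}$ corresponds, up to an explicit scalar in $D$, to $\xi(f_\scal)\in S_2^+(D,(\tfrac{D}{\cdot}))$, because $\xi_k=2iv^k\overline{\partial/\partial\bar\tau}$ commutes with the slash operator (up to the weight shift $k\mapsto 2-k$) and with selection of the $\varphi_0$-component. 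The identity then reduces to the general comparison of Petersson inner products between $A_{k,\rho_L}$ and $A_k^+(D,(\tfrac{D}{\cdot}))$. I would prove this comparison by noting that the $\varphi_0$-component $F_0$ of a vector form $F$ is itself a scalar weight-$k$ modular form on $\Gamma_0(D)$ with character $(\tfrac{D}{\cdot})$ (since $\rho_L|_{\Gamma_0(D)}$ acts on $\varphi_0$ through this character), and unfolding the integral $\int_{\SL_2(\Z)\backslash\H}\sum_\mu F_\mu\overline{G_\mu}v^k\,d\mu$ onto a fundamental domain for $\Gamma_0(D)$ via the $D+1$ coset representatives of $\Gamma_0(D)\backslash\SL_2(\Z)$; the cross terms vanish by the orthogonality of distinct cosets in the Weil representation on $S_L$. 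The coefficient $\tfrac12 D(D+1)$ at $k=2$ combines the Fricke-normalisation factor $D^{(1-k)/2}$, the prefactor $\tfrac12$ of the averaging formula of Lemma~\ref{lem:iso}, the index $[\SL_2(\Z):\Gamma_0(D)]=D+1$, and the relation between $(\xi(f))_\scal$ and $\xi(f_\scal)$.

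For part~(2), I would expand
$$
\langle f^+(\tau),\mathcal E(\tau,\tilde L)\rangle=\sum_{\mu\in\tilde L'/\tilde L} f^+_\mu(\tau)\,\mathcal E_\mu(\tau)
$$
and use Lemma~\ref{lem:iso} in the inverse direction: $f^+_\mu$ has Fourier coefficient $\tfrac12\tilde c^+(-Dm)$ at $q^m$ under the congruence $-Dm\equiv DQ(\mu)\pmod D$, and $\mathcal E_\mu$ has Fourier coefficient $\tfrac12\tilde a(Dm)$ at $q^m$ under $Dm\equiv DQ(\mu)\pmod D$, where the scalar Fourier coefficients $a(n)$ of $\mathcal E_\scal=-2\Lambda'(0,\chi)-4\sum_{n>0}b_n q^n$ are furnished by the preceding lemma. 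The constant term of each $f^+_\mu\mathcal E_\mu$ is thus $\tfrac14\sum_N\tilde c^+(-N)\tilde a(N)$, summed over integers $N$ satisfying the two congruences $\pm N\equiv DQ(\mu)\pmod D$ simultaneously. Since $D$ is an odd prime and $\tilde L'/\tilde L\cong\Z/D$, these constraints force $2N\equiv 0\pmod D$, and I case-split on $N\bmod D$: (i) for $N=0$ only $\mu=0$ contributes, giving $\tfrac14(2c^+(0))(-4\Lambda'(0,\chi))=-2c^+(0)\Lambda'(0,\chi)$; (ii) for $N>0$ with $D\mid N$ only $\mu=0$ contributes, yielding $\tfrac14(2c^+(-N))(2a(N))=c^+(-N)a(N)=-4c^+(-N)b_N=-2\tilde c^+(-N)b_N$; (iii) for $N>0$ with $D\nmid N$ the two classes $\pm\mu$ with $DQ(\mu)\equiv N\bmod D$ contribute equally, producing $2\cdot\tfrac14 c^+(-N)(-4b_N)=-2c^+(-N)b_N=-2\tilde c^+(-N)b_N$. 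Summing cases (i)--(iii) yields the claimed formula.

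The main obstacle is part~(1): pinning down the comparison factor $\tfrac12 D(D+1)$ requires delicate bookkeeping of the Fricke normalisation, the $\Gamma_0(D)$-coset decomposition of $\SL_2(\Z)$, the action of $\rho_L$ on the components $\varphi_\mu$, and the interaction of the $\xi$-operator with the scalarisation; once that constant is secured, part~(2) is a direct combinatorial verification.
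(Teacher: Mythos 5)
Your part~(2) is correct: the unfolding of $\CT[\langle f^+,\mathcal E(\tau,\tilde L)\rangle]$ via the Fourier expansion formula in Lemma~\ref{lem:iso}, together with the $\tfrac14$--prefactor and the case split on $N\bmod D$ (and the doubling $\tilde a,\tilde c^+$ on $D\mid N$), reproduces $-2c^+(0)\Lambda'(0,\chi)-2\sum_{n>0}\tilde c^+(-n)b_n$ exactly as you describe, and the step where $(\tfrac{D}{n})=-1$ is excluded uses the plus-space condition on both factors, which you should state explicitly.

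Part~(1) has a genuine gap: the assertion that \emph{``the cross terms vanish by the orthogonality of distinct cosets in the Weil representation on $S_L$''} is false. Writing $F$ via the inverse map of Lemma~\ref{lem:iso}, one finds $\sum_\mu F_\mu\overline{G_\mu}=\tfrac14 D^{k-1}\sum_{\gamma,\gamma'}(F_\scal|W_D|\gamma)\overline{(G_\scal|W_D|\gamma')}\,\rho(\gamma'\gamma^{-1})_{00}$, and $\rho(\delta)_{00}$ does \emph{not} vanish for $\delta\notin\Gamma_0(D)$: for instance $\rho(S)_{00}=e((2-n)/8)/\sqrt{D}\neq 0$. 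Equivalently, unfolding $\int_{\Gamma_0(D)\backslash\H}F_0\overline{G_0}v^k\,d\mu$ over $\SL_2(\Z)\backslash\H$ by the $D+1$ coset representatives $I,ST^j$ ($0\le j<D$) produces the weight $\sum_\gamma\rho(\gamma)_{0\nu}\overline{\rho(\gamma)_{0\nu'}}=\delta_{0\nu}\delta_{0\nu'}+[\,Q(\nu)\equiv Q(\nu')\;(1)\,]$, and since $D$ is prime and $L'/L\cong\Z/D\Z$ the second bracket is supported on $\nu'=\pm\nu$, \emph{not} just $\nu'=\nu$. The off-diagonal terms with $\nu'=-\nu\neq\nu$ survive. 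They are absorbed only because every $F$ in the image of Lemma~\ref{lem:iso} satisfies the plus-space symmetry $F_\nu=F_{-\nu}$ (immediate from the Fourier-expansion formula, since $DQ(\mu)\equiv DQ(-\mu)\pmod D$); with this symmetry the weighted sum collapses to $2\sum_\nu F_\nu\overline{G_\nu}$. One then obtains $\int_{\Gamma_0(D)\backslash\H}F_0\overline{G_0}v^k\,d\mu=2\langle F,G\rangle_{\Pet}$, and combining with the Fricke invariance of the $\Gamma_0(D)$-Petersson product, the normalization $D^{(1-k)/2}$, and the index $D+1$ (in the normalized scalar Petersson product) yields $\langle F,G\rangle_{\Pet}=\tfrac12 D^{k-1}(D+1)\langle F_\scal,G_\scal\rangle_{\Pet}$, which is $\tfrac12 D(D+1)$ at $k=2$. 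You should replace the orthogonality claim by this symmetry argument; otherwise the proof does not close.
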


Combining this with Theorem \ref{theo6.10}, we obtain:

\begin{corollary}
\label{cor6.14}
Let $F=\Q(\sqrt D)$ with $D \equiv 1 \mod 4$ prime, and let $E$ be a CM  non-biquadratic field  with absolute discriminant $d_E =D^2 \tilde D$ where  $\tilde D \equiv 1 \mod 4$ is square free.  If $f_\scal \in H_0^+(D, (\frac{D}{\cdot}))$, then
$$
\Phi(\CM(E), f_\scal) =-2 c'(E) \left[ \sum_{n>0} \tilde c^+(-n) b_n +
c^+(0) \Lambda'(0, \chi) + \frac{D(D+1)}4 \LL_\scal'(0, \xi(f_\scal), \tilde L)\right].
$$
\end{corollary}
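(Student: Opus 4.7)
The proof will be a direct substitution of the two identities established in the immediately preceding lemma into the formula given by Theorem~\ref{theo6.10}. Specifically, let $f \in H_{0,\bar\rho_{\tilde L}}$ be the vector-valued harmonic weak Maass form corresponding to $f_\scal$ under the isomorphism of Lemma~\ref{lem:iso}. By the definition $\Phi(z,f_\scal):=\Phi(z,f)$, extended linearly to the $0$-cycle $\CM(E)$, we have $\Phi(\CM(E),f_\scal)=\Phi(\CM(E),f)$, so Theorem~\ref{theo6.10} gives
$$
\Phi(\CM(E), f_\scal) \;=\; c'(E)\,\Bigl(\,\CT[\langle f^+(\tau),\mathcal E(\tau,\tilde L)\rangle] \;-\; \LL'(0,\xi(f),\tilde L)\,\Bigr).
$$

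Next I would insert the two formulas from the preceding lemma. Part (1) of that lemma yields
$$
-\LL'(0,\xi(f),\tilde L) \;=\; -\tfrac{1}{2}D(D+1)\,\LL_\scal'(0,\xi(f_\scal),\tilde L),
$$
while part (2) gives
$$
\CT[\langle f^+,\mathcal E(\tau,\tilde L)\rangle] \;=\; -2c^+(0)\Lambda'(0,\chi) \;-\; 2\sum_{n>0}\tilde c^+(-n)\,b_n.
$$
Adding these and factoring $-2$ out of the bracket produces exactly
$$
\Phi(\CM(E),f_\scal) \;=\; -2\,c'(E)\left[\,\sum_{n>0}\tilde c^+(-n)b_n \;+\; c^+(0)\Lambda'(0,\chi) \;+\; \tfrac{D(D+1)}{4}\,\LL_\scal'(0,\xi(f_\scal),\tilde L)\,\right],
$$
which is the claimed identity.

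Thus, no new ingredient beyond the preceding lemma and Theorem~\ref{theo6.10} is required; the only things to be careful about are (a) matching the normalization constants $\tilde c^+(n)$ vs.\ $c^+(n)$ correctly in the constant-term computation — these are recorded in Lemma~\ref{lem:iso} which doubles the coefficient exactly when $D\mid n$ — and (b) keeping track of the Fricke-involution twist in the definition of $E_\scal^*(\tau,s)$ so that the Petersson inner-product identity in the scalar picture produces the factor $\tfrac12 D(D+1)$. Neither of these presents any real obstacle; the Corollary is essentially a bookkeeping translation of Theorem~\ref{theo6.10} from the vector-valued to the scalar-valued language.
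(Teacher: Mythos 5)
Your proposal is correct and is exactly what the paper does: the paper gives no separate proof, simply stating ``Combining this with Theorem~\ref{theo6.10}, we obtain'' the corollary, which is precisely the substitution of the preceding lemma's two identities into Theorem~\ref{theo6.10} followed by factoring out $-2$, as you carry out.
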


Now we assume that $f_\scal =\sum c^+(n) q^n \in H_0^+(D,
(\frac{D}{\cdot})) $ is weakly holomorphic, i.e., $\xi(f_\scal)=0$, and
that $\tilde c^+(n)\in \Z$ for $n<0$.  Then there is a (up to a
constant of modulus $1$ unique) memomorphic Hilbert modular form
$\Psi(z, f_\scal)$  of weight $c^+(0)$ with a Borcherds product
expansion whose divisor is given by
$$
\dv (\Psi) = T(f_\scal),
$$
see \cite[Theorem 9]{BB}. Morever,  by construction it satisfies
$$
-\log \|\Psi(z, f_\scal)\|_\Pet^2 =\Phi(z, f_\scal),
$$
where
$$
\|\Psi(z_1, z_2, f_\scal)\|_\Pet^2 =|\Psi(z_1, z_2, f_\scal)|^2 (4 \pi
e^{-\gamma}y_1 y_2)^{c^+(0)}
$$
is the Petersson metric (normalized in a way which is convenient for
our purposes), and $\gamma= -\Gamma'(1)$ is Euler's constant.

\begin{corollary} \label{cor6.13}
Let the notation be as in  Corollary \ref{cor6.14} and assume that
$f_\scal$ is weakly homomorphic. Then
$$
\log \| \Psi(\CM(E), f_\scal)\|_\Pet =c'(E)  \sum_{n>0} \tilde c^+(-n)
b_n + c'(E)  c^+(0) \Lambda'(0, \chi) .
$$
\end{corollary}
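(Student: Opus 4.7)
The plan is to deduce Corollary \ref{cor6.13} as an essentially immediate specialization of Corollary \ref{cor6.14}, combined with the Borcherds product interpretation of $\Phi(z,f_\scal)$ recalled just before the corollary statement. So this will be a short argument with no new computation, only a careful bookkeeping of signs and factors of $2$.

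The first step is to observe that when $f_\scal$ is weakly holomorphic, $\xi(f_\scal)=0$ by the exact sequence \eqref{ex-sequ}, so the Rankin--Selberg derivative $\LL_\scal'(0,\xi(f_\scal),\tilde L)$ appearing on the right-hand side of Corollary \ref{cor6.14} vanishes. Substituting this into Corollary \ref{cor6.14} gives
$$
\Phi(\CM(E), f_\scal) \;=\; -2\,c'(E)\left[\sum_{n>0}\tilde c^+(-n)\,b_n + c^+(0)\,\Lambda'(0,\chi)\right].
$$
The second step is to use the relation $-\log\|\Psi(z,f_\scal)\|_\Pet^2=\Phi(z,f_\scal)$, i.e.\ $\Phi(z,f_\scal)=-2\log\|\Psi(z,f_\scal)\|_\Pet$, together with the (implicit) convention that the value of a logarithmic Green function on a $0$-cycle is the corresponding weighted sum of its values, so that
$$
\Phi(\CM(E),f_\scal) \;=\; -2\log\|\Psi(\CM(E),f_\scal)\|_\Pet.
$$
Dividing by $-2$ and comparing with the displayed formula from step one yields the claim.

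The only things to check are therefore (i) that $\Psi(z,f_\scal)$ has no zeros or poles on $\CM(E)$, so that $\log\|\Psi(\CM(E),f_\scal)\|_\Pet$ makes sense, and (ii) that the multiplicities used in evaluating $\Phi$ and $\log\|\Psi\|_\Pet$ on $\CM(E)$ agree. Point (i) is guaranteed by the Proposition of Section \ref{sect3} stating that $Z(m,\ph)$ and the big CM cycles $Z(T(j),h_0^\pm(j),g_j)$ do not intersect, together with $\dv(\Psi)=T(f_\scal)=Z(f)$. Point (ii) is just the definition: both sides use the same weighted counting of points in $\CM(E)$, already baked into the constant $c'(E)=\deg(\CM(E))/(2\Lambda(0,\chi))$. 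There is no real obstacle; the whole content is that the vanishing of $\xi(f_\scal)$ removes the $L$-derivative term and the Borcherds identity converts the automorphic Green function value into $-2\log\|\Psi\|_\Pet$.
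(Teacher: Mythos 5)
Your proposal is correct and is exactly the argument the paper intends: specialize Corollary \ref{cor6.14} using $\xi(f_\scal)=0$ for weakly holomorphic $f_\scal$, then convert $\Phi(\CM(E),f_\scal)$ into $-2\log\|\Psi(\CM(E),f_\scal)\|_\Pet$ via the Borcherds product identity. The consistency checks you flag (disjointness of $\CM(E)$ from $\dv(\Psi)=T(f_\scal)$ and matching multiplicity conventions) are the same implicit sanity checks the paper relies on.
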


When $\tilde D$ is also prime, we have $c'(E)=1$. Then this
corollary coincides with \cite[Theorem 1.4]{BY1}, since the CM
points in this paper are counted with multiplicity $\frac{2}{w_E}$,
and our CM cycle is twice the CM cycle there as a set with multiplicities.
%

 Combining this corollary with the \cite[Theorem 1.2]{YaGeneral}, one has the following theorem,
 which verifies a special case of Conjecture \ref{conj5.3}.

\begin{theorem}  \label{theo6.16} Assume that $E$ is a quartic CM
number field with absolute discriminant $d_E= D^2 \tilde D$ and real quadratic subfield $F=\Q(\sqrt D)$
such that $D \equiv 1 \mod 4$ is prime and $\tilde D \equiv 1 \mod 4$ is square-free. Assume further that
$$
\OO_E=\OO_F + \OO_F \frac{w+\sqrt\Delta}2
$$
is free over $\OO_F$, where $w, \Delta \in \OO_F$. Then  $c'(E) =1$.
Moreover, let $\mathcal X$ be a regular toroidal compactification of the
moduli stack of principally polarized abelian surfaces with real
multiplication by $\OO_F$, \cite{rapoport.HB}, \cite{deligne-pappas}, and
 let $\mathcal T_n$ be the closure of $T_n$ in  $\mathcal X$. Let $\CCM(E)$ be the moduli stack of the principally polarized
abelian surfaces with CM by $\OO_E$.
Then for any
$f_\scal \in H_0^+(D, (\frac{D}{\cdot}))$, one has
$$
\langle \hat{ \mathcal T}(f_\scal),  \CCM(E) \rangle_{\text{\rm Fal}} =-
\frac{1}2 c^+(0) \Lambda'(0, \chi) -\frac{D(D+1)}8  \LL_\scal'(0, \xi(f_\scal),
\tilde L).
$$
Here
$$\hat{\mathcal  T}(f_\scal) =(\mathcal T(f_\scal), \Phi(z, f_\scal) )=\big(\,\sum_{n>0} \tilde c^+(-n) \mathcal T_n, \Phi(z, f_\scal)\,\big)\in
 \widehat{\CH}^1(\mathcal X)_\C.$$
\end{theorem}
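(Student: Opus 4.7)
The plan is to prove this by decomposing the Faltings height pairing into its archimedean and finite contributions and computing each separately. In the notation of \cite{SABK} and its extension \cite{BKK} (needed because $c^+(0)$ need not vanish), one has
\[
\langle \hat{\mathcal T}(f_\scal), \CCM(E)\rangle_{\text{Fal}}
= \tfrac{1}{2}\Phi(\CM(E), f_\scal)
+ \langle \mathcal T(f_\scal), \CCM(E)\rangle_{\text{fin}}
+ c^+(0)\cdot(\text{Hodge correction}),
\]
where the Hodge correction arises from the metrized line bundle associated to the constant term $c^+(0)$.

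For the archimedean contribution, I would invoke Corollary \ref{cor6.14}, which, once $c'(E)=1$ is established, gives
\[
\tfrac{1}{2}\Phi(\CM(E), f_\scal)
= -\sum_{n>0}\tilde c^+(-n)\, b_n - c^+(0)\Lambda'(0,\chi) - \tfrac{D(D+1)}{8}\LL_\scal'(0,\xi(f_\scal),\tilde L).
\]
For the finite contribution, I would apply \cite[Theorem 1.2]{YaGeneral}: this is the place where the integrality hypothesis $\OO_E=\OO_F+\OO_F\frac{w+\sqrt\Delta}{2}$ enters, as the hypothesis under which Yang's formula evaluates $\langle \mathcal T_n,\CCM(E)\rangle_{\text{fin}}$ in closed form. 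Summing with weights $\tilde c^+(-n)$ and combining with the Hodge correction, I expect the finite part to contribute exactly $\sum_{n>0}\tilde c^+(-n)\, b_n+\tfrac{1}{2}c^+(0)\Lambda'(0,\chi)$. Adding the two contributions, the $\sum_n\tilde c^+(-n)\,b_n$ terms cancel, and one is left with the claimed formula.

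The assertion $c'(E)=1$ amounts to showing $\deg\CM(E)=2\Lambda(0,\chi)$. I would derive this by counting isomorphism classes in $\CM(E)$ with the weighting $\tfrac{4}{w_E}$ of Remark \ref{rem6.6}, expressing the count in terms of $h(E)$ via the orbit description of Section \ref{sect6.1}, and then invoking the analytic class number formula for the character $\chi=\chi_{E/F}$, together with the identity $\Lambda(s,\chi)=\Lambda(s,\tilde\chi)$ of \eqref{eq:Hyecke}. Under the freeness hypothesis on $\OO_E/\OO_F$, this count simplifies as in the argument of \cite[\S9]{BY1} in the prime-$\tilde D$ case.

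The main obstacle will be the bookkeeping involved in the cancellation of the $b_n$-terms: ensuring that the normalizations, signs, the factor $2/w_E$ in the multiplicity of CM points (Remark \ref{rem6.6}), and the standard factor $\tfrac{1}{2}$ of the Green-function part of the Arakelov pairing all match between Yang's finite-intersection formula and Corollary \ref{cor6.14}. A second, more delicate point is the treatment of the Hodge-bundle term in \cite{BKK} when $c^+(0)\neq 0$; one must verify that its contribution is exactly $\tfrac{1}{2}c^+(0)\Lambda'(0,\chi)$, which together with the finite-intersection $\Lambda'$ contribution cancels one unit of $c^+(0)\Lambda'(0,\chi)$ coming from the archimedean part, leaving the asserted $-\tfrac{1}{2}c^+(0)\Lambda'(0,\chi)$.
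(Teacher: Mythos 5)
Your proposal and the paper's sketch share the same skeleton — decompose the Faltings pairing into finite and archimedean pieces, compute the archimedean piece from Corollary \ref{cor6.14}, and import the finite piece from \cite[Theorem~1.2]{YaGeneral} — but they diverge at the two most delicate points, and in both places the paper's route is the one that actually closes the argument.

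First, the proof of $c'(E)=1$. You propose to prove it by a direct count: express $\deg\CM(E)$ in terms of $h(E)$ via the orbit description of Section~\ref{sect6.1} and then invoke the class number formula. That route is not the one the paper takes, and nothing in the paper supports that it works without the extra hypothesis $\tilde D$ prime; indeed, after Theorem~\ref{theo6.10} the authors only assert $c'(E)=1$ \emph{when $\tilde D$ is also prime}, citing \cite[(9.2)]{BY1}. Under merely $\tilde D$ square-free (plus the freeness hypothesis), the paper derives $c'(E)=1$ \emph{indirectly}: take $f_\scal$ weakly holomorphic with $c^+(0)=0$, so $\hat{\mathcal T}(f_\scal)=0$ in $\widehat{\CH}^1(\calX)_\C$, whence
$0=\langle\mathcal T(f_\scal),\CCM(E)\rangle_{\mathrm{fin}}-\tfrac12\log|\Psi(\CM(E),f_\scal)|$;
by Corollary~\ref{cor6.13} the archimedean term equals $\tfrac12 c'(E)\sum_n\tilde c^+(-n)b_n$, while \cite[Theorem~1.2]{YaGeneral} evaluates the finite term as $\tfrac12\sum_n\tilde c^+(-n)b_n$, forcing $c'(E)=1$. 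In other words, $c'(E)=1$ is itself a by-product of the arithmetic intersection comparison, not an input obtained by counting. Your proposed counting argument is a genuine gap.

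Second, the decomposition and normalizations. The paper's decomposition is simply
$\langle\hat{\mathcal T}(f_\scal),\CCM(E)\rangle_{\mathrm{Fal}}=\langle\mathcal T(f_\scal),\CCM(E)\rangle_{\mathrm{fin}}+\tfrac14\Phi(\CM(E),f_\scal)$
(the sketch's printed $-\tfrac14$ is a typo — the $c^+(0)=0$ display two lines earlier and the final display force the $+$ sign). The factor $\tfrac14$ rather than $\tfrac12$ comes from $\CCM(E)(\C)=\tfrac12\CM(E)$ (Remark~\ref{rem6.6}: stack points are weighted by $1/w_E$, the cycle points by $2/w_E$). There is no ``Hodge correction'' term; the $\Lambda'(0,\chi)$ contribution is already entirely inside $\Phi(\CM(E),f_\scal)$ via Corollary~\ref{cor6.14}, and the finite intersection from \cite{YaGeneral} is purely $\tfrac12\sum_n\tilde c^+(-n)b_n$, with no $\Lambda'$ piece. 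Your numbers happen to land on the right answer only because two compensating errors were introduced: you wrote $\tfrac12\Phi$ (off by a factor of two) and then produced a display for $\tfrac12\Phi(\CM(E),f_\scal)$ in which the coefficient of $\LL_\scal'$ is $\tfrac{D(D+1)}{8}$ rather than the $\tfrac{D(D+1)}{4}$ that Corollary~\ref{cor6.14} actually gives. If you use the correct $\tfrac14\Phi$ and drop the invented Hodge term, the bookkeeping comes out cleanly with no cancellations to arrange.
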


\begin{proof}(sketch)
First notice that $\CCM(E)$ does not meet with
  the boundary of $\mathcal X$ and that $\CCM(E)$ intersects with $\mathcal
  T(f_\scal)$ properly. Notice also that $\CCM(E)(\mathbb C) = \frac{1}2
  \CM(E)$ since each point in $\CCM(E)(\mathbb C)$ is counted with
  multiplicity $\frac{1}{w_E}$ (each point $\bA$ in $\mathcal X$ is
  counted with multiplicity $\frac{1}{\Aut(\bA)}$).

First, take $f_\scal$ to be non-trivial weakly holomorphic with
$c^+(0)=0$ so that
 $\hat{\calT}(f_\scal) =0$ in $\widehat{\CH}^1(\mathcal X)_\C$. So
$$
0= \langle \hat{\mathcal T}(f_\scal),  \CCM(E) \rangle_{\text{Fal}} =
\langle \mathcal T(f_\scal),  \CCM(E) \rangle_{\text{fin}} - \frac{1}2
\log|\Psi(\CM(E), f_\scal)|,
$$
and consequently
$$
\langle \mathcal T(f_\scal),  \CCM(E) \rangle_{\text{fin}}=\frac{1}2
c'(E) \sum_{n>0} \tilde c^+(-n) b_n
$$
by Corollary \ref{cor6.13}. On the other hand, \cite[Theorem
1.2]{YaGeneral} (which uses Corollary \ref{cor6.13}) asserts that
\begin{equation} \label{eq6.22}
\langle \mathcal T(f_\scal),  \CCM(E) \rangle_{\text{fin}}=\frac{1}2
 \sum_{n>0} \tilde c^+(-n) b_n.
\end{equation}
So $c'(E)=1$.

Now for a general $f_\scal$, one has by definition, (\ref{eq6.22}) and
 Corollary \ref{cor6.14} that
\begin{align*}
\langle \hat{\mathcal T}(f_\scal),  \CCM(E) \rangle_{\text{Fal}}
 &=\langle \mathcal T(f_\scal),  \CCM(E) \rangle_{\text{fin}} -\frac{1}4
 \Phi(\CM(E), f_\scal)
 \\
  &=-
\frac{1}2 c^+(0) \Lambda'(0, \chi) -\frac{D(D+1)}8  \LL_\scal'(0, \xi(f_\scal),
\tilde L).
\end{align*}
This concludes the proof of the theorem.
\end{proof}

 We remark that (\ref{eq6.22}) verifies a special
 case of Conjecture \ref{conj5.2} and that this corollary is a generalization of \cite[Theorem 1.3]{YaGeneral}. A slight refinement of the main
 result in \cite{HY} together with Theorem \ref{theo6.7} should
 settle Conjectures \ref{conj5.2} and \ref{conj5.3} completely for
 Hilbert modular surfaces. It would also settle the Colmez
 conjecture (\cite{Col}, \cite{YaColmez})  for  CM abelian surfaces.

\end{document}